\newtheorem{theorem}{Theorem}[section]
\newtheorem{definition}[theorem]{Definition}
\newtheorem{proposition}[theorem]{Proposition}
\newtheorem{lemma}[theorem]{Lemma}
\newtheorem{corollary}[theorem]{Corollary}
\newtheorem{conjecture}[theorem]{Conjecture}
\newtheorem{remark}[theorem]{Remark}
\begin{document}

\mathchardef\smallp="3170

\title[$\ell$--adic realizations of Picard $1$--motives] {{The Galois module structure of $\ell$--adic realizations\\ of
Picard $1$--motives and applications}}
\author[C. Greither and C. Popescu]{Cornelius Greither and Cristian D. Popescu}

\address{Institut f\"ur Theoretische Informatik und Mathematik,
Universit\"at der Bundeswehr, M\"unchen,
85577 Neubiberg, Germany}
\email{cornelius.greither@unibw.de}

\address{Department of Mathematics, University of California, San Diego, La Jolla, CA 92093-0112, USA}
\email{cpopescu@math.ucsd.edu}

\keywords{Picard $1$--motives, global $L$-functions,
\'etale and crystalline cohomology, Galois module structure, Fitting ideals}

\subjclass[2000]{11M38, 11G20, 11G25, 11G45, 14F30}

\thanks{The second author was partially supported by NSF Grants DMS-0600905 and DMS-0901447}

%\maketitle

\begin{abstract}

Let $Z\longrightarrow Z'$ be a $G$--Galois cover of smooth,
projective curves over an arbitrary algebraically closed field
$\kappa$, and let $\mathcal S$ and $\mathcal T$ be $G$--equivariant,
disjoint, finite, non-empty sets of closed points on $Z$, such that
$\mathcal S$ contains the ramification locus of the cover. In this
context, we prove that the $\ell$--adic realizations
$T_\ell(\mathcal M_{\mathcal S, \mathcal T})$ of the Picard
$1$--motive $\mathcal M_{\mathcal S, \mathcal T}$ associated to the
data $(Z, \kappa, \mathcal S, \mathcal T)$ are $G$--cohomologically
trivial and therefore $\mathbb Z_\ell[G]$--projective modules of finite
rank, for all prime numbers $\ell$. In the process, we give a new
proof of Nakajima's theorem \cite{Nakajima} on the Galois module
structure of the semi-simple piece $\Omega_Z(-[\mathcal S])^s$ under
the action of the Cartier operator on a certain space of
differentials $\Omega_Z(-[\mathcal S])$ associated to $Z$ and
$\mathcal S$, assuming that $\text{char}(\kappa)=p$. As a main
arithmetic application of these results, we consider the situation
where the set of data $(Z, Z', \mathcal S, \mathcal T)$ is defined
over a finite field $\mathbb F_q$ and $\kappa: =\overline{\mathbb F_q}$.
We combine results of Deligne and Tate, Berthelot, Bloch and Illusie
with our cohomological triviality result to prove that in this
context we have an equality of $\mathbb Z_\ell[[\overline G]]$--ideals
$\left(\Theta_{\mathcal S, \mathcal
T}(\gamma^{-1})\right)=\text{Fit}_{\mathbb Z_\ell[[\overline
G]]}(T_\ell(M_{\mathcal S, \mathcal T}))$, for all prime numbers
$\ell$, where $\overline G:=G\times\text{Gal}(\overline{\mathbb
F_q}/\mathbb F_q)$, $\gamma$ is the $q$--power arithmetic Frobenius
morphism (viewed as a distinguished topological generator of ${\rm
Gal}(\overline{\mathbb F_q}/\mathbb F_q)$) and $\Theta_{\mathcal S,
\mathcal T}(u)\in\mathbb Z[G][u]$ is the (polynomial) $G$--equivariant
$L$--function associated to the data $(Z, Z', \mathbb F_q, \mathcal S,
\mathcal T)$. We obtain this way a Galois--equivariant refinement of
results of Deligne and Tate \cite{Tate-Stark} on $\ell$--adic
realizations of Picard $1$--motives associated to (global) function
fields. As an immediate application, we prove refinements of the
classical Brumer-Stark and Coates-Sinnott conjectures linking
special values of $\Theta_{\mathcal S, \mathcal T}(u)$ to certain
invariants of ideal-class groups and \'etale cohomology groups,
respectively. In our upcoming work, we will show how several other
classical conjectures on special values of global $L$--functions
follow from the results obtained in this paper.
\end{abstract}

\maketitle

\section{\bf Introduction} The main goal of this paper is twofold.
{\it First,} we consider a Galois cover $Z\longrightarrow Z'$
of Galois group $G$ of smooth, projective curves over an arbitrary
algebraically closed field $\kappa$, and two $G$--invariant,
disjoint, finite, non-empty sets of closed points $\mathcal S$ and
$\mathcal T$ on $Z$, such that $\mathcal S$ contains the
ramification locus of the cover. In this context, we prove that the
$\ell$--adic realizations $T_\ell(\mathcal M_{\mathcal S, \mathcal
T})$ of the Picard $1$--motive $\mathcal M_{\mathcal S, \mathcal T}$
associated to the data $(Z, \kappa, \mathcal S, \mathcal T)$ are
$G$--cohomologically trivial and therefore $\mathbb
Z_\ell[G]$--projective modules of finite rank, for all prime numbers
$\ell$ (see Theorem \ref{main-ct}.)

This goal is achieved in two main steps: in the first, we give a new
interpretation of the groups $\mathcal M_{\mathcal S, \mathcal T}[n]$ of $n$--torsion
points of $\mathcal M_{\mathcal S, \mathcal T}$ (see Proposition \ref{interpret});
in the second step, we use this interpretation to give a complete description of
the $\mathbb F_{\ell}[G]$--module
structure of $\mathcal M_{\mathcal S, \mathcal T}[\ell]$ in the case
where $G$ is an $\ell$--group. We prove that this module is free and explicitly compute
its rank in terms of the genus of $Z'$ and the Hasse-Witt invariant of $Z'$,
depending on whether $\ell\ne \text{char}(\kappa)$ or $\ell=\text{char}(\kappa)$,
respectively (see Theorem \ref{ml-free}.)
This result implies our first main Theorem \ref{main-ct}.

Another immediate application of Proposition \ref{interpret} and
Theorem \ref{ml-free} is a new proof of a classical theorem of
Nakajima \cite{Nakajima} on the Galois module structure of the
semi-simple piece $\Omega_Z(-[\mathcal S])^s$ under the action of
the Cartier operator on a certain space of differentials
$\Omega_Z(-[\mathcal S])$ associated to $Z$ and $\mathcal S$,
assuming that $\text{char}(\kappa)=p$. In \S6, we show that
Nakajima's theorem is a particular case of our Theorem
\ref{ml-free}, placing this way Nakajima's results in the general
context of Picard $1$--motives (see Proposition \ref{equivalence}.)
In upcoming work \cite{Greither-Popescu2}, we will apply Proposition
\ref{interpret} to give explicit formulas for the canonical perfect
duality pairings between the torsion (or $\ell$--adic realizations)
of the Picard $1$--motive $\mathcal M_{\mathcal S, \mathcal T}$ and
its dual (Albanese) $1$--motive $\mathcal M_{T, S}$, generalizing
the classical formulas for the Weil pairings at the level of
Jacobians of curves.
\medskip

{\it Second}, we focus our attention on the arithmetically
interesting situation where $G$ is abelian, the set of data $(Z, Z',
\mathcal S, \mathcal T)$ is defined over a finite field $\mathbb F_q$
of characteristic $p$ (in the sense explained below) and
$\kappa=\mathbb F$ (the algebraic closure of $\mathbb F_q$.) In this case,
we have a $G$--Galois cover $Z_0\longrightarrow Z'_0$ of smooth
projective curves over $\mathbb F_q$, $Z=Z_0\times_{\mathbb F_q}\mathbb F$
and $Z'=Z'_0\times_{\mathbb F_q}\mathbb F$, $S$ and $T$ are two finite,
disjoint sets of closed points on $Z'_0$, $S$ contains the ramified
locus of the cover, and $\mathcal S$ and $\mathcal T$ consist of all
closed points on $Z$ sitting above points in $S$ and $T$,
respectively. To the set of data $(Z_0, Z'_0, \mathbb F_q, S, T)$ one
can associate a polynomial $G$--equivariant $L$--function
$\Theta_{S, T}(u)\in\mathbb Z[G][u]$, obtained via an equivariant
construction from the classical Artin $L$--functions for the
$G$--cover of $\mathbb F_q$--schemes $Z_0\longrightarrow Z_0'$ (see
Chpt.~V of \cite{Tate-Stark} and \S4 below.) Deligne expresses
$\Theta_{S, T}(u)$ in terms of the ($G$--equivariant) characteristic
polynomial of the $q$--power geometric Frobenius morphism acting on
the $\mathbb Q_\ell$--representation $\mathbb Q_{\ell}\otimes_{\mathbb
Z_\ell}T_{\ell}(\mathcal M_{\mathcal S, \mathcal T})$ of $G$, for
any prime $\ell\ne p$ (see \cite{Tate-Stark}, Chpt.~V.) One can
combine Deligne's result with a theorem of Berthelot and express
$\Theta_{S, T}(u)$ in terms of the $G$--equivariant $q$--power
Frobenius action on the crystalline cohomology group $H^1_{\rm
cris}(Z/\mathbb C_p)$ (see \S4 below and the Appendix of
\cite{Popescu-Stark}.) In \S4, we combine this link with a result of
Bloch and Illusie relating crystalline and $p$--adic \'etale
cohomology and with our cohomological triviality result (Theorem
\ref{main-ct}) to show that we have an equality of $\mathbb
Z_\ell[[\overline G]]$--ideals
$$\left(\Theta_{S, T}(\gamma^{-1})\right)=\text{Fit}_{\mathbb Z_\ell[[\overline G]]}(T_\ell(\mathcal M_{\mathcal S, \mathcal T})),$$
for all prime numbers $\ell$, where $\overline G:=G\times\text{\rm
Gal}(\mathbb F/\mathbb F_q)$, $\gamma$ is the $q$--power arithmetic
Frobenius viewed as a distinguished topological generator of ${\rm
Gal}(\mathbb F/\mathbb F_q)$, and ``Fit'' stands for the first Fitting
ideal (see Theorem \ref{Fitting}.) As an immediate consequence of Theorem \ref{Fitting},
in \S5 we prove refined versions
of the function field analogues
of the classical Brumer-Stark and Coates-Sinnott conjectures, linking special
values of the equivariant $L$--function $\Theta_{S, T}$ to Galois-module structure
invariants of certain ideal class-groups and \'etale cohomology groups
of $K$, respectively (see Theorems \ref{refined-Brumer-Stark} and \ref{refined-Coates-Sinnott}.)
\medskip

In upcoming work, we show how our main Theorems \ref{main-ct} and
\ref{Fitting} permit us to prove (or, in same cases, give new proofs
of) several other classical conjectures on special values of
$L$--functions for characteristic $p$ global fields (function
fields), e.g. the Rubin--Stark and Gross conjectures. Other
applications of our work on Tate modules of Picard $1$--motives
include explicit calculations of Fitting ideals over Galois--group
rings of (Pontrjagin duals of) ideal--class groups (see
\cite{Greither-Popescu}) as well as a possible new way of looking at
Tate sequences and canonical classes. Further, we are in the process
of developing a number field Iwasawa theoretic analogue of this
theory with similar results and applications, under certain
restrictive hypotheses. Just as our work over finite fields can be
viewed as a Galois-equivariant refinement of work of Deligne and
Tate \cite{Tate-Stark} on $\ell$--adic realizations of Picard
$1$--motives associated to (global) function fields, its intended
number field analogue will be a natural Galois-equivariant
refinement of Wiles's results \cite{Wiles} on the Main Conjecture in
classical Iwasawa Theory over totally real number fields.
\medskip

\noindent {\bf Acknowledgement.} The authors would like to thank
their home universities for making mutual visits possible. These visits were funded by the DFG and the NSF, respectively.
The
second author would like to thank Kiran Kedlaya for fruitful
discussions and especially for bringing Nakajima's work
\cite{Nakajima} to his attention.

\section{\bf Picard $1$--motives, their torsion points and $\ell$--adic
realizations} In this section, we recall the definitions and main
properties of a special class of $1$--motives called Picard
$1$--motives, and briefly review their torsion structure and
$\ell$--adic realizations. The section ends with a proposition which
gives a new interpretation of the group of $n$--torsion points of a
Picard $1$--motive, which will prove very useful in our future
considerations. For general properties of $1$--motives the reader
can consult Chpt. 10 of Deligne's paper \cite{Deligne-HodgeIII}. For
the arithmetic situation treated in the last section of this paper,
the reader will find Chpt.~V of \cite{Tate-Stark} useful as well.

Let $Z$ denote a smooth,
projective (not necessarily connected) curve over an algebraically
closed field $\kappa$.  Let $\mathcal
K:=\kappa(Z)$ be the $\kappa$--algebra of rational functions on
$Z$. If $Z$ is connected, then $\mathcal K$ is a field; otherwise,
$\mathcal K$ is a finite direct sum of fields (finitely generated, of transcendence degree $1$
over $\kappa$). These are the fields of rational functions on
the connected components of $Z$. We let $\mathcal S$ and
$\mathcal T$ denote two finite (possibly empty), disjoint sets of
closed points on $Z$.

\begin{definition}\label{st} For $\mathcal S$ and $\mathcal T$ as above and all $n\in\mathbb N$, we define the following
subgroups of the multiplicative group $\mathcal K^\times$ of invertible
elements in $\mathcal K.$
$$\mathcal K_{\mathcal T}^\times:=\{f\in\mathcal K^\times\mid f(P)=1,\,\forall\, P\in\mathcal T\}\,,$$
$$\mathcal K_{\mathcal S, \mathcal T}^{(n)}:=\{f\in\mathcal K_{\mathcal T}^\times\mid {\rm div}(f)=nD+y\}\,,$$
where $D$ is an arbitrary divisor on $Z$ and $y$ is a divisor on
$Z$ supported on $\mathcal S$.
\end{definition}

If $\mathcal R$ is a (possibly infinite) set of closed points on
$Z$, we denote by ${\rm Div}^0(\mathcal R)$ the abelian group of
divisors supported on $\mathcal R$ and of {\it multidegree} $0$
(i.e. of degree $0$ when restricted to each of the connected
components of $Z$.) In what follows, we will denote by $J$ the
Jacobian variety associated to $Z$. We remind the reader that $J$ is
a connected abelian variety defined over $\kappa$ of dimension equal
to the sum of the genera of the connected components of $Z$. If
$J(\kappa)$ denotes the group of $\kappa$--rational points of $J$,
then we have a well-known canonical group isomorphism
$$J(\kappa)\overset\sim\longrightarrow\frac{{\rm Div}^0(Z)}{\{{\rm div}(f)\mid
f\in\mathcal K^\times\}}\,.$$
The generalized Jacobian $J_{\mathcal T}$ associated to $(Z, \mathcal T)$
is a semi-abelian variety (an extension of the abelian variety $J$ by a torus
$\tau_{\mathcal T}$). At the level of $\kappa$--rational points, there are canonical group isomorphisms
$$\tau_{\mathcal T}(\kappa)\overset\sim\longrightarrow \bigoplus_{z\in\pi_0(Z)}\frac{\oplus_{v\in\mathcal T_z}\kappa^\times}{\kappa^\times}\,,\qquad J_{\mathcal T}(\kappa)\overset\sim\longrightarrow\frac{{\rm Div}^0(Z\setminus\mathcal T)}{\{{\rm div}(f)\mid
f\in\mathcal K_{\mathcal T}^\times\}}\,,$$ where $\pi_0(Z)$ denotes
the set of connected components of $Z$, $\mathcal T_z$ denotes the
set of those points in $\mathcal T$ which lie on $z$, and
$\kappa^\times$ sits inside $\oplus_{v\in\mathcal T_z}\kappa^\times$
diagonally, for all $z\in\pi_0(Z)$. The isomorphisms above lead to a
short exact sequence of groups
$$\xymatrix{
0\ar[r] &\tau_{\mathcal T}(\kappa)\ar[r] &J_{\mathcal T}(\kappa)\ar[r] &J(\kappa)\ar[r] &0\,,}
$$
where the right nontrivial map is the obvious one (taking the class
of a divisor $D\in {\rm Div}^0(Z\setminus\mathcal T)$ into the class
of $D$ modulo $\{{\rm div}(f)\mid f\in\mathcal K^\times\}$) and the
left nontrivial map sends the class of $(x_v)_{v\in\mathcal T}\in
\oplus_{v\in\mathcal T}\kappa^\times$ into the class of ${\rm
div}(f)$, where $f\in\mathcal K^\times$, such that $f(v)=x_v$, for
all $v\in\mathcal T$.

\begin{remark} Note that since $\tau_{\mathcal T}(\kappa)$ and $J(\kappa)$
are divisible groups, $J_{\mathcal T}(\kappa)$ is also divisible and
we have obvious exact sequences
\begin{equation}\label{semiabelian-n-torsion}0\longrightarrow
\tau_{\mathcal T}[n]\longrightarrow J_{\mathcal T}[n]\longrightarrow
J[n]\longrightarrow 0\,,\quad 0\longrightarrow
T_{\ell}(\tau_{\mathcal T})\longrightarrow T_\ell(J_{\mathcal
T})\longrightarrow T_\ell(J)\longrightarrow 0\end{equation} at the
levels of $n$--torsion points and $\ell$--adic Tate modules, for all
$n\in\mathbb N$ and all primes $\ell$.
\end{remark}
\medskip

\noindent Clearly, we have a group morphism
$${\rm Div}^0(\mathcal S)\overset{\delta}\longrightarrow J_{\mathcal T}(\kappa),$$
sending a divisor $D$ into its class modulo $\{{\rm div}(f)\mid
f\in\mathcal K_{\mathcal T}^\times\}$. (Recall that $\mathcal S\cap\mathcal T=\emptyset$.)

\begin{definition} Deligne's Picard $1$--motive $\mathcal M_{\mathcal
S, \mathcal T}$ associated to $(Z, \kappa, \mathcal S, \mathcal T)$
is, by definition, the group morphism ${\rm Div}^0(\mathcal
S)\overset{\delta}\longrightarrow J_{\mathcal T}(\kappa)$ defined
above.
\end{definition}

\begin{remark} One can obviously think of $\mathcal M_{\mathcal S,
\mathcal T}$ as associated to $(\mathcal K, \kappa, \mathcal S,
\mathcal T)$, where $\kappa$ is an algebraically closed field,
$\mathcal K$ is a semi-simple finitely generated $\kappa$--algebra,
of (pure) transcendence degree $1$ over $\kappa$, and $\mathcal S$
and $\mathcal T$ are finite, disjoint sets of valuations on
$\mathcal K$ which are trivial on $\kappa$. (In this framework, $Z$
is just a smooth, projective model for $\mathcal K$ over $\kappa$.)
\end{remark}
\medskip

Next, we recall the construction of the $\ell$--adic Tate modules
($\ell$--adic realizations) of $\mathcal M_{\mathcal S, \mathcal
T}$. For every $n\in\mathbb N$, we consider the fiber-product of groups
$J_{\mathcal T}(\kappa)\times^n_{J_{\mathcal T}(\kappa)}{\rm
Div}^0(\mathcal S)$, with respect to the map ${\rm Div}^0(\mathcal
S)\overset{\delta}\longrightarrow J_{\mathcal T}(\kappa)$ and the
multiplication by $n$ map $J_{\mathcal T}(\kappa)\overset
n\longrightarrow J_{\mathcal T}(\kappa).$ An element in this fiber
product consists of a pair $(\widehat D, x)$, where $D\in{\rm
Div}^0(Z\setminus\mathcal T)$, $\widehat D$ is the class of $D$ in
$J_{\mathcal T}(\kappa)$, $x\in{\rm Div}^0(\mathcal S)$ and
$nD-x={\rm div}(f)$, for some $f\in\mathcal K_{\mathcal T}^\times.$
Since $J_{\mathcal T}(\kappa)$ is a divisible group, we have a
commutative diagram (in the category of abelian groups) whose rows
are exact.
$$\xymatrix {
0\ar[r] & J_{\mathcal T}[n]\ar[r]\ar[d]^{=} &J_{\mathcal T}(\kappa)\times^n_{J_{\mathcal T}(\kappa)}{\rm Div}^0(\mathcal S)\ar[r]\ar[d] &{\rm Div}^0(\mathcal S)\ar[r]\ar[d]^{\delta} &0\\
0\ar[r] & J_{\mathcal T}[n]\ar[r] & J_{\mathcal T}(\kappa) \ar[r]^n & J_{\mathcal T}(\kappa)\ar[r] &0}
$$
In the diagram above, $J_{\mathcal T}[n]$ denotes the group of $n$--torsion points of $J_{\mathcal T}$.

\begin{definition} The group $\mathcal M_{S, T}[n]$ of $n$--torsion
points of $\mathcal M_{\mathcal S, \mathcal T}$ is defined to be
$$\mathcal M_{S, T}[n]:=(J_{\mathcal T}(\kappa)\times^n_{J_{\mathcal T}(\kappa)}{\rm Div}^0(\mathcal S))\otimes\mathbb Z/n\mathbb Z\,.$$
\end{definition}
\noindent Since ${\rm Div^0}(\mathcal S)$ is a free $\mathbb
Z$--module, we have commutative diagrams whose rows are exact
\begin{equation}\label{motive-n-torsion}\xymatrix {
0\ar[r] & J_{\mathcal T}[m]\ar[r]\ar@{>>}[d] &\mathcal M_{\mathcal S, \mathcal T}[m]\ar[r]\ar@{>>}[d] &{\rm Div}^0(S)\otimes\mathbb Z/m\mathbb Z\ar[r]\ar@{>>}[d] &0\\
0\ar[r] & J_{\mathcal T}[n]\ar[r] &\mathcal M_{\mathcal S, \mathcal T}[n]\ar[r] &{\rm Div}^0(S)\otimes\mathbb Z/n\mathbb Z\ar[r] &0\,,
}\end{equation}
for all $n, m\in\mathbb N$ with $n\mid m$, where the left vertical map is multiplication by $m/n$, the right vertical map is the canonical projection and the middle vertical
map is the unique morphism which makes the diagram commute.

\begin{definition} If $\ell$ is prime, then the $\ell$--adic Tate module $T_\ell(\mathcal M_{\mathcal S, \mathcal T})$ of $\mathcal M_{\mathcal S, \mathcal T}$
is given by
$$T_\ell(\mathcal M_{\mathcal S, \mathcal T})=\underset{\underset n\longleftarrow}\lim \mathcal M_{\mathcal S, \mathcal T}[\ell^n]\,,$$
where the projective limit is taken with respect to the surjective maps described in the diagram above.
\end{definition}
\noindent This way, for every prime $\ell$, we obtain exact sequences of free $\mathbb Z_\ell$--modules
$$0\longrightarrow T_{\ell}(J_{\mathcal T})\longrightarrow T_{\ell}(\mathcal M_{\mathcal S, \mathcal T})\longrightarrow{\rm Div}^0(\mathcal S)\otimes\mathbb Z_\ell\longrightarrow 0\,.$$

\begin{remark}\label{p-torsion} Let us assume that $\text{\rm char}(\kappa)=p$. In this case, if $m\in\mathbb N$, then $\tau_{\mathcal T}[p^m]=\{1\}$. (There are no non-trivial
$p$--power roots of unity in characteristic $p$.) Consequently, $J_{\mathcal T}[p^m]=J[p^m]$ and $T_p(J_\mathcal T)=T_p( J)\,.$
Via the exact sequences above, this leads to
$$\mathcal M_{\mathcal S, \mathcal T}[p^m]=\mathcal M_{\mathcal S, \emptyset}[p^m],\quad T_p(M_{\mathcal S, \mathcal T})=T_p(M_{\mathcal S, \emptyset})\,.$$
\end{remark}

\begin{remark}\label{l-infinity} Obviously, for all $n, m\in\mathbb N$, such that $n\mid m$, we have injective group morphisms
$$\mathcal M_{\mathcal S, \mathcal T}[n]\longrightarrow \mathcal M_{\mathcal S, \mathcal T}[m],$$
given by $(\widehat D, x)\otimes\hat 1\longrightarrow
(\widehat{D}, \frac{m}{n}\cdot x)\otimes\hat 1$.
Therefore, one can define $\mathcal M_{\mathcal S, \mathcal
T}[\ell^\infty]:= \underset
m{\underset{\longrightarrow}\lim}\mathcal M[\ell^m]$, for any prime
$\ell$, where the injective limit is taken with respect to the
morphisms constructed above. It is easy to see that $\mathcal
M_{\mathcal S, \mathcal T}[\ell^\infty]$ is a divisible $\mathbb
Z_{\ell}$--module of finite co-rank and there is a canonical $\mathbb
Z_{\ell}$--module isomorphism
$$T_{\ell}(\mathcal M_{\mathcal S, \mathcal T})\overset\sim\to {\rm Hom}_{\mathbb Z_\ell}(\mathbb Q_\ell/\mathbb Z_\ell,\, \mathcal M_{\mathcal S, \mathcal T}[\ell^\infty])\,.$$
\end{remark}

\begin{proposition}\label{interpret} For every $n\in\mathbb N$, we have a canonical group isomorphism
$$\mathcal K^{(n)}_{\mathcal S, \mathcal T}/\mathcal K_{\mathcal T}^{\times n}\overset\sim\longrightarrow\mathcal M_{\mathcal S, \mathcal T}[n]\,,$$
where the notations are as in Definition \ref{st}.
\end{proposition}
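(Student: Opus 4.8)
The plan is to write down an explicit homomorphism $\varphi\colon\mathcal K^{(n)}_{\mathcal S,\mathcal T}\to\mathcal M_{\mathcal S,\mathcal T}[n]$, check that it kills $\mathcal K_{\mathcal T}^{\times n}$, and then identify the induced map on the quotient as an isomorphism by comparing two short exact sequences via the five lemma. To construct $\varphi$, let $f\in\mathcal K^{(n)}_{\mathcal S,\mathcal T}$. Since $f(P)=1$ for all $P\in\mathcal T$, the divisor $\mathrm{div}(f)$ is supported away from $\mathcal T$, is of multidegree $0$, and — by the definition of $\mathcal K^{(n)}_{\mathcal S,\mathcal T}$ — has all of its multiplicities at points outside $\mathcal S$ divisible by $n$. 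A short bookkeeping argument (choosing the part of $D$ supported on $\mathcal S$ so as to correct multidegrees on each connected component, noting that a component disjoint from $\mathcal S$ forces the relevant degree to be $0$ automatically) then lets one write $\mathrm{div}(f)=nD-x$ with $D\in\mathrm{Div}^0(Z\setminus\mathcal T)$ and $x\in\mathrm{Div}^0(\mathcal S)$. As $nD$ and $x$ differ by the principal divisor of an element of $\mathcal K_{\mathcal T}^\times$, one gets $n\widehat D=\delta(x)$ in $J_{\mathcal T}(\kappa)$, so $(\widehat D,x)$ lies in the fiber product $J_{\mathcal T}(\kappa)\times^n_{J_{\mathcal T}(\kappa)}\mathrm{Div}^0(\mathcal S)$; set $\varphi(f):=(\widehat D,x)\otimes\bar1$.

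Next I would check that $\varphi$ is well defined and factors through the desired quotient. The pair $(D,x)$ is unique up to replacing it by $(D+E,x+nE)$ with $E\in\mathrm{Div}^0(\mathcal S)$, which modifies $(\widehat D,x)$ by $(\widehat E,nE)$; using divisibility of $J_{\mathcal T}(\kappa)$ to choose $\gamma$ with $n\gamma=\widehat E$ we get $(\widehat E,nE)=n\cdot(\gamma,E)$ in the fiber product, so $(\widehat E,nE)\otimes\bar1=0$ and $\varphi$ is well defined. It is a homomorphism because $\mathrm{div}$ is additive. If $f=g^n$ with $g\in\mathcal K_{\mathcal T}^\times$, then $\mathrm{div}(g)\in\mathrm{Div}^0(Z\setminus\mathcal T)$ is principal, so one may take $D=\mathrm{div}(g)$, $x=0$, and $\widehat D=0$ in $J_{\mathcal T}(\kappa)$ gives $\varphi(f)=0$. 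Hence $\varphi$ descends to $\overline\varphi\colon\mathcal K^{(n)}_{\mathcal S,\mathcal T}/\mathcal K_{\mathcal T}^{\times n}\to\mathcal M_{\mathcal S,\mathcal T}[n]$.

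To see $\overline\varphi$ is an isomorphism I would produce a short exact sequence
$$0\longrightarrow J_{\mathcal T}[n]\longrightarrow \mathcal K^{(n)}_{\mathcal S,\mathcal T}/\mathcal K_{\mathcal T}^{\times n}\overset{\bar\partial}\longrightarrow \mathrm{Div}^0(\mathcal S)\otimes\mathbb Z/n\mathbb Z\longrightarrow 0,$$
where $\bar\partial(\bar f)=x\otimes\bar1$ with $x$ as above (well defined since the ambiguity changes $x$ by $nE$, and zero on $n$-th powers), and the left-hand map sends the class of an $n$-torsion point $\widehat{D'}$ of $J_{\mathcal T}$, i.e.\ $nD'=\mathrm{div}(g)$ with $g\in\mathcal K_{\mathcal T}^\times$, to the class of $g$. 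Surjectivity of $\bar\partial$ uses divisibility of $J_{\mathcal T}(\kappa)$ once more; injectivity of the left-hand map uses torsion-freeness of the divisor group; exactness in the middle is the direct observation that $\bar\partial(\bar f)=0$ forces $\mathrm{div}(f)=nD''$ with $D''\in\mathrm{Div}^0(Z\setminus\mathcal T)$, and then $\widehat{D''}\in J_{\mathcal T}[n]$ maps to $\bar f$ (take $g=f$). The map $\overline\varphi$ then fits into a ladder between this sequence and the lower exact row of \eqref{motive-n-torsion}: the left square commutes because $\varphi(g)=(\widehat{D'},0)\otimes\bar1$ agrees with the inclusion $J_{\mathcal T}[n]\hookrightarrow\mathcal M_{\mathcal S,\mathcal T}[n]$, and the right square commutes because $\varphi(f)=(\widehat D,x)\otimes\bar1\mapsto x\otimes\bar1$. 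The outer vertical maps being the identity, the five lemma yields that $\overline\varphi$ is an isomorphism; its compatibility with the transition maps of \eqref{motive-n-torsion} — hence the naturality needed later to pass to $T_\ell$ — is immediate from the construction.

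The genuinely delicate points are the clean construction of $\bar\partial$ and of the auxiliary exact sequence — above all the verification that $\ker\bar\partial$ is exactly the image of $J_{\mathcal T}[n]$ — together with the repeated, essential use of divisibility of $J_{\mathcal T}(\kappa)$ (equivalently, of $\kappa$ being algebraically closed). If one instead prefers to argue directly that $\overline\varphi$ is injective, the crux becomes the claim that a function $u\in\mathcal K_{\mathcal T}^\times$ whose divisor equals $n$ times the principal divisor of an element of $\mathcal K_{\mathcal T}^\times$ is itself an $n$-th power in $\mathcal K_{\mathcal T}^\times$; this rests on divisibility of $\kappa^\times$, plus a small correction by $n$-th roots of unity to arrange that the extracted root still lies in $\mathcal K_{\mathcal T}^\times$.
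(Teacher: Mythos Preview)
Your proof is correct, and the map $\overline\varphi$ you construct is precisely the inverse of the isomorphism $\widetilde\phi$ the paper writes down. The difference lies in how the bijectivity is verified. The paper works in the opposite direction: it defines $\phi\colon J_{\mathcal T}(\kappa)\times^n_{J_{\mathcal T}(\kappa)}\mathrm{Div}^0(\mathcal S)\to \mathcal K^{(n)}_{\mathcal S,\mathcal T}/\mathcal K_{\mathcal T}^{\times n}$ by $(\widehat D,x)\mapsto \widehat f$ where $\mathrm{div}(f)=nD-x$, checks surjectivity via the same bookkeeping you use, and then computes $\ker\phi = n\cdot(\text{fiber product})$ directly, so that $\phi$ descends to the isomorphism $\widetilde\phi$ on $\mathcal M_{\mathcal S,\mathcal T}[n]$. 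Your approach instead builds the auxiliary short exact sequence $0\to J_{\mathcal T}[n]\to \mathcal K^{(n)}_{\mathcal S,\mathcal T}/\mathcal K_{\mathcal T}^{\times n}\to \mathrm{Div}^0(\mathcal S)\otimes\mathbb Z/n\mathbb Z\to 0$ and invokes the five lemma against the defining sequence \eqref{motive-n-torsion}. Both arguments rest on the same two ingredients --- divisibility of $J_{\mathcal T}(\kappa)$ and the fact that units in $\mathcal K_{\mathcal T}^\times$ with trivial divisor are $n$-th powers (which uses $\kappa=\overline\kappa$ on components disjoint from $\mathcal T$) --- and you correctly flag these as the delicate points. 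Your route has the advantage of making the compatibility with \eqref{motive-n-torsion} manifest, which is exactly what is needed later when passing to $T_\ell$; the paper's direct kernel computation is slightly shorter but leaves that compatibility implicit. One small point you leave tacit: the well-definedness of your left-hand map $J_{\mathcal T}[n]\to \mathcal K^{(n)}_{\mathcal S,\mathcal T}/\mathcal K_{\mathcal T}^{\times n}$ (independence of the choices of $D'$ and of $g$) needs the same $\kappa^\times$-divisibility argument you mention only in your final paragraph; it would be cleaner to note it where the map is introduced.
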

\begin{proof} Let us fix an $n\in\mathbb N$. An element in the fiber product $J_{\mathcal T}(\kappa)\times^n_{J_{\mathcal T}(\kappa)}{\rm Div}^0(\mathcal S)$ consists of a pair
$(\widehat D, x)$, where $D\in{\rm Div}^0(Z\setminus\mathcal T)$, $\widehat D$ is the class of $D$ in $J_{\mathcal T}(\kappa)$, $x\in{\rm Div}^0(\mathcal S)$ and $nD-x={\rm div}(f)$, for
some $f\in\mathcal K_{\mathcal T}^\times.$ First, we define a group morphism
$$J_{\mathcal T}(\kappa)\times^n_{J_{\mathcal T}(\kappa)}{\rm Div}^0(\mathcal S) \overset\phi\longrightarrow \mathcal K^{(n)}_{\mathcal S, \mathcal T}/\mathcal K_{\mathcal T}^{\times n}$$
given by $\phi(\widehat D, x)=\widehat f$, where $f\in\mathcal K_{\mathcal T}^\times$ such that ${\rm div}(f)=nD-x$, and $\widehat f$
is the class of $f$ in $\mathcal K_{\mathcal S, \mathcal T}^{(n)}/\mathcal K^{\times n}_{\mathcal T}$.
We claim that $\phi$ is a well--defined, surjective group morphism. We check this next.
\medskip

\noindent {\bf Step 1. $\phi$ is a well-defined function.}
First, let $(\widehat D, x)=(\widehat D', x)$ in $J_{\mathcal T}(\kappa)\times^n_{J_{\mathcal T}(\kappa)}{\rm Div}^0(\mathcal S)$.
The definitions show that
this means that
$$D-D'={\rm div}(g),\text{ for some } g\in\mathcal K_{\mathcal T}^\times$$
$$nD-x={\rm div}(f)\text{ and } nD'-x={\rm div}(f'),\text{ for some }f, f'\in\mathcal K_{\mathcal T}^\times\,.$$
When combined, these equalities imply that ${\rm div}(ff'^{-1})={\rm div}(g^n)$. Now, one has to work on each connected component of $Z$ separately. So, without loss of generality, we
may assume that $Z$ is connected. If $\mathcal T\ne\emptyset$, the last equality implies that
$ff'^{-1}=g^n$. If $\mathcal T=\emptyset$, then we have $\mathcal K_{\mathcal T}^\times=\mathcal K^\times$ and
$ff'^{-1}=g^n\cdot\alpha$, for some constant rational function $\alpha$ on $Z$. However, since $\kappa$ is algebraically
closed, $\alpha$ is an $n$--power in $\mathcal K^\times$ and therefore $ff'^{-1}=g'^n$, for some $g'\in\mathcal K_{\mathcal T}^\times$.
The conclusion is that in all cases we have $\widehat f=\widehat f'$ in $\mathcal K_{\mathcal S, \mathcal T}^{(n)}/\mathcal K^{\times n}_{\mathcal T}$, which concludes Step 1.

Finally, note that if $(\widehat D, x)$ and $f$ are as above, then $\widehat f$ is uniquely determined (despite the
fact that $f$ itself is not.) Indeed, the above argument shows that for $\mathcal T\ne\emptyset$ this is obvious,
while for $\mathcal T=\emptyset$ this follows from the fact that $\kappa$ is algebraically closed

\medskip

\noindent {\bf Step 2. $\phi$ is a surjective group morphism.} Only the surjectivity of $\phi$ requires a proof. This is an immediate consequence
of the equality
$$\mathcal K_{\mathcal S, \mathcal T}^{(n)}=\{f\in\mathcal K_{\mathcal T}^\times\mid {\rm div}(f)=nD-x, D\in{\rm Div}^0(Z\setminus \mathcal T),\,x\in{\rm Div}^0(\mathcal S)\}\,.$$
The inclusion of the right-hand side into the left-hand side is obvious from the definition of $\mathcal K_{\mathcal S, \mathcal T}^{(n)}$. Now, let $f\in
\mathcal K_{\mathcal S, \mathcal T}^{(n)}$ and let ${\rm div}(f)=nD+y$, where $D\in{\rm Div}(Z)$ and $y\in{\rm Div}(\mathcal S)$. If $y$ is the $0$ divisor,
then we obviously have $D\in{\rm Div}^0(Z\setminus \mathcal T)$ and we are done. (Keep in mind that ${\rm deg}({\rm div}(f))=0$ !)
If $y$ is not the $0$ divisor, let $v$ be a closed point in the support of $y$. Then, we have
$${\rm div}(f)=n(D-{\rm deg}(D)v)-(-n\cdot{\rm deg}(D)v-y)\,.$$
It is immediate that
$$D-{\rm deg}(D)v\in {\rm Div}^0(Z\setminus \mathcal T)\text{ and }(-n\cdot{\rm deg}(D)v-y)\in{\rm Div}^0(\mathcal S)\,,$$
which concludes Step 2. Therefore $\phi$ is a surjective group morphism.
\medskip

\noindent
{\bf Step 3.} Now, we claim that
$$\ker(\phi)=n (J_{\mathcal T}(\kappa)\times^n_{J_{\mathcal T}(\kappa)}{\rm Div}^0(\mathcal S))\,.$$
Indeed, let $(\widehat D, x)\in J_{\mathcal T}(\kappa)\times^n_{J_{\mathcal T}(\kappa)}{\rm Div}^0(\mathcal S)$, such that $\phi(\widehat D, x)=\widehat 0$. This is  equivalent to
$$nD-x={\rm div}(g^n)=n\cdot{\rm div}(g),$$
for some $g\in\mathcal K^\times_{\mathcal T}$. This implies that $x=nx'$, for some $x'\in {\rm Div}^0(\mathcal S)$. However, $J_{\mathcal T}(\kappa)$ is divisible, so there exists
$\widehat {D'}\in J_{\mathcal T}(\kappa)$, such that $\widehat{nD'}=\widehat D$. This means that $nD'-D={\rm div}(f')$, for some $f'\in\mathcal K_{\mathcal T}^\times$. When combined, these equalities
lead to
$$(\widehat D, x)=n(\widehat D', x'),\quad nD'-x'={\rm div}(gf')\,,$$
which proves that $\ker(\phi)\subseteq n (J_{\mathcal T}(\kappa)\times^n_{J_{\mathcal T}(\kappa)}{\rm Div}^0(\mathcal S))\,.$ The opposite inclusion is obvious.
\medskip

\noindent
Now, Steps 1--3 combined with the definition of $\mathcal M_{\mathcal S, \mathcal T}[n]$ lead to the conclusion that the group morphism $\phi$
constructed above factors through a group isomorphism $\widetilde{\phi}$
$$\xymatrix{
J_{\mathcal T}(\kappa)\times^n_{J_{\mathcal T}(\kappa)}{\rm Div}^0(\mathcal S) \ar@{>>}[r]^{\qquad \phi}\ar@{>>}[d] &\mathcal K^{(n)}_{\mathcal S, \mathcal T}/\mathcal K_{\mathcal T}^{\times n}\\
\mathcal M_{\mathcal S, \mathcal T}[n]\ar[ur]^{\widetilde\phi}_{\sim} &
}$$ This concludes the proof of the Proposition.
\end{proof}

\begin{remark}\label{G-equiv} Assume that $\kappa_0$ is a subfield of $\kappa$ and $\mathcal A$
is a group of automorphisms of $Z$ in the category of $\kappa_0$--schemes. Further, assume that
the sets of closed points $\mathcal S$ and $\mathcal T$ are
(set-wise, not necessarily point-wise) invariant under the action of
$\mathcal A$. Then it is easy to see that literally all the groups
associated to the set of data $(Z, \kappa, \mathcal S, \mathcal T)$
in the current section are endowed with a natural $\mathcal
A$--action (and can be viewed as $\mathbb Z[\mathcal A]$--modules).
Most importantly, all the group morphisms considered above become
morphisms of $\mathbb Z[\mathcal A]$--modules. In particular, the
isomorphism
$$\mathcal K^{(n)}_{\mathcal S, \mathcal T}/\mathcal K_{\mathcal T}^{\times n}\overset\sim\longrightarrow\mathcal M_{\mathcal S, \mathcal T}[n] $$
in the statement of Proposition \ref{interpret} is an isomorphism of
$\mathbb Z[\mathcal A]$--modules. A typical example is the following.
Assume that $\kappa_0$ is a subfield of $\kappa$, such that the
field extension $\kappa/\kappa_0$ is Galois of (profinite) Galois
group $\mathcal G$. Assume that $Z_0\longrightarrow Z'_0$ is a
(finite) Galois cover of smooth, projective curves over $\kappa_0$,
of (finite) Galois group $G$. Let $S'_0$ and $T'_0$ be two finite,
disjoint sets of closed points on $Z'_0$. Let
$Z:=Z_0\times_{\kappa_0}\kappa$ and
$Z':=Z'_0\times_{\kappa_0}\kappa$ be the usual extensions of scalars
to $\kappa$. Further, let $\mathcal S$ and $\mathcal T$ be the sets
of closed points on $Z$ sitting above points in $S'_0$ and $T'_0$,
respectively. Then the (profinite) group $\mathcal
A:=G\times\mathcal G$ is a group of $\kappa_0$--scheme automorphisms
of $Z$ which fixes the sets of closed points $\mathcal S$ and
$\mathcal T$ (set-wise.) In this case, it is important to note that
if one views the groups associated above to $(Z, \kappa, \mathcal S,
\mathcal T)$ as topological groups endowed with discrete topologies,
then the action of $\mathcal G$ (viewed as a topological group with
its profinite topology) on these groups is continuous. This implies
directly that $T_{\ell}(\mathcal M_{\mathcal S, \mathcal T})$
(endowed with the $\ell$--adic topology now) is a topological $\mathbb
Z_{\ell}[G][[\mathcal G]]$--module, for all primes $\ell$.
\end{remark}

\section{\bf Cohomological triviality of $\ell$--adic realizations\\ of Picard $1$--motives}

In what follows, $Z\rightarrow Z'$ will denote a (finite)
$G$--Galois cover of connected, smooth, projective curves over an
algebraically closed field $\kappa$, and $\mathcal S$ and $\mathcal
T$ are $G$-invariant, finite, non--empty, disjoint sets of closed
points on $Z$, such that $\mathcal S$ contains the ramified locus of
the cover. Let $\mathcal S'$ and $\mathcal T'$ denote the sets of
closed points on $Z'$, sitting below points in $\mathcal S$ and
$\mathcal T$, respectively. We denote by $\mathcal M$ and $\mathcal
M'$ the Picard $1$--motives associated to the sets of data $(Z,
\kappa, \mathcal S, \mathcal T)$ and $(Z', \kappa, \mathcal S',
\mathcal T')$, respectively. As usual, $\mathcal K$ and $\mathcal
K'$ denote the $\kappa$--algebras of rational functions on $Z$ and
$Z'$, respectively.

Note that we have natural inclusions $\mathcal K_{\mathcal S',
\mathcal T'}^{'(n)}\subseteq \mathcal K_{\mathcal S, \mathcal
T}^{(n)}$ and $\mathcal K_{\mathcal T'}^{'\times n}\subseteq
\mathcal K_{\mathcal T}^{\times n}$. Now, since $\mathcal K/\mathcal
K'$ is unramified at points in $\mathcal T'$, we have an equality
$\mathcal K_{\mathcal T}^\times\cap\mathcal K'=\mathcal K_{\mathcal
T'}^{'\times}$. Since $\mathcal T\ne\emptyset$, the multiplicative
group $\mathcal K_{\mathcal T}^\times$ has no torsion. This implies
right away that $\mathcal K_{\mathcal T}^{\times n}\cap \mathcal
K^{'\times}=\mathcal K_{\mathcal T'}^{'\times n}$. (Indeed, simply
take $G$--invariants in the $\mathbb Z[G]$--module isomorphism
$\mathcal K_{\mathcal T}^\times \overset\sim\to\mathcal K^{\times
n}_{\mathcal T}$ given by $x\to x^n$ and keep in mind that the
extension $\mathcal K/\mathcal K'$ is $G$--Galois.) These
observations combined with Proposition \ref{interpret} lead to
natural injective $G$--module morphisms
$$\mathcal M'[n]\hookrightarrow \mathcal M[n]\,,\qquad T_{\ell}(\mathcal M')\hookrightarrow T_{\ell}(\mathcal M)\,,$$
for all natural numbers $n$ and all prime numbers $\ell$,  where $G$ acts trivially on $\mathcal M'[n]$ and $T_{\ell}(\mathcal M')$.

\begin{theorem}\label{G-invariants}
With notations as above, we have
$$\mathcal M[n]^G=\mathcal M'[n]\,,\qquad  T_{\ell}(\mathcal M)^G=T_{\ell}(\mathcal M') $$
for all natural numbers $n$ and all prime numbers $\ell$.
\end{theorem}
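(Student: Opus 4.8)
The plan is to deduce the whole statement from Proposition~\ref{interpret}. It identifies $\mathcal M[n]$ with $\mathcal K^{(n)}_{\mathcal S,\mathcal T}/\mathcal K_{\mathcal T}^{\times n}$ and $\mathcal M'[n]$ with $\mathcal K_{\mathcal S',\mathcal T'}^{'(n)}/\mathcal K_{\mathcal T'}^{'\times n}$ as $G$--modules, the injection $\mathcal M'[n]\hookrightarrow\mathcal M[n]$ being induced by the inclusion $\mathcal K_{\mathcal S',\mathcal T'}^{'(n)}\subseteq\mathcal K^{(n)}_{\mathcal S,\mathcal T}$ (injectivity coming from the equality $\mathcal K_{\mathcal T}^{\times n}\cap\mathcal K^{'\times}=\mathcal K_{\mathcal T'}^{'\times n}$ recalled above). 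Taking $G$--cohomology of $0\to\mathcal K_{\mathcal T}^{\times n}\to\mathcal K^{(n)}_{\mathcal S,\mathcal T}\to\mathcal M[n]\to 0$ gives the exact sequence
$$0\longrightarrow(\mathcal K_{\mathcal T}^{\times n})^G\longrightarrow(\mathcal K^{(n)}_{\mathcal S,\mathcal T})^G\longrightarrow\mathcal M[n]^G\longrightarrow H^1(G,\mathcal K_{\mathcal T}^{\times n}),$$
so the equality $\mathcal M[n]^G=\mathcal M'[n]$ will follow once I check that (i) the image of $(\mathcal K^{(n)}_{\mathcal S,\mathcal T})^G$ in $\mathcal M[n]$ is exactly $\mathcal M'[n]$, and (ii) $H^1(G,\mathcal K_{\mathcal T}^{\times n})=0$.

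For (i): since $Z$ and $Z'$ are connected, $\mathcal K/\mathcal K'$ is a $G$--Galois field extension, so $(\mathcal K^\times)^G=\mathcal K^{'\times}$ and hence $(\mathcal K^{(n)}_{\mathcal S,\mathcal T})^G=\mathcal K^{(n)}_{\mathcal S,\mathcal T}\cap\mathcal K^{'\times}$. I claim this equals $\mathcal K_{\mathcal S',\mathcal T'}^{'(n)}$. The inclusion ``$\supseteq$'' is clear. For ``$\subseteq$'', take $f$ in the intersection; then $f\in\mathcal K_{\mathcal T}^\times\cap\mathcal K^{'\times}=\mathcal K_{\mathcal T'}^{'\times}$, and writing ${\rm div}_Z(f)=nD+y$ with $y$ supported on $\mathcal S$ and using ${\rm div}_Z(f)=\pi^\ast{\rm div}_{Z'}(f)$ for $\pi\colon Z\to Z'$, the fact that $\pi$ is unramified away from $\mathcal S$ forces $n\mid{\rm ord}_{Q'}(f)$ for every closed point $Q'\notin\mathcal S'$; hence ${\rm div}_{Z'}(f)=nD'+y'$ with $y'$ supported on $\mathcal S'$, i.e. $f\in\mathcal K_{\mathcal S',\mathcal T'}^{'(n)}$. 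Combined with $(\mathcal K_{\mathcal T}^{\times n})^G=\mathcal K_{\mathcal T}^{\times n}\cap\mathcal K^{'\times}=\mathcal K_{\mathcal T'}^{'\times n}$, this shows that the image of $(\mathcal K^{(n)}_{\mathcal S,\mathcal T})^G$ in $\mathcal M[n]$ is $\mathcal K_{\mathcal S',\mathcal T'}^{'(n)}/\mathcal K_{\mathcal T'}^{'\times n}$, which is precisely the copy of $\mathcal M'[n]$ inside $\mathcal M[n]$.

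The crux is (ii). Since $\mathcal T\ne\emptyset$, the group $\mathcal K_{\mathcal T}^\times$ is torsion--free, so $x\mapsto x^n$ is a $G$--isomorphism $\mathcal K_{\mathcal T}^\times\xrightarrow{\ \sim\ }\mathcal K_{\mathcal T}^{\times n}$ and it suffices to show $H^1(G,\mathcal K_{\mathcal T}^\times)=0$. I would use the exact sequence of $G$--modules
$$1\longrightarrow\mathcal K_{\mathcal T}^\times\longrightarrow\mathcal K^\times\longrightarrow\bigoplus_{P\in\mathcal T}\mathcal K_P^\times/U_P^{(1)}\longrightarrow 1,$$
where $\mathcal K_P$ is the completion of $\mathcal K$ at $P$ and $U_P^{(1)}=1+\mathfrak m_P$; surjectivity on the right is weak approximation on $Z$, and all maps are canonical, hence $G$--equivariant. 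Because $\mathcal S\cap\mathcal T=\emptyset$ and $\mathcal S$ contains the ramified locus, $\pi$ is \'etale over $\mathcal T'$, so $G$ acts freely on the fibre over each point of $\mathcal T'$; thus the right--hand term is an induced $G$--module, whence its $H^1$ vanishes and its $G$--invariants are $\bigoplus_{P'\in\mathcal T'}\mathcal K_{P'}^{'\times}/U_{P'}^{(1)}$ (each local extension $\mathcal K'_{P'}\hookrightarrow\mathcal K_P$ being trivial, since it is unramified with trivial residue extension over the algebraically closed $\kappa$). Since moreover $H^1(G,\mathcal K^\times)=0$ by Hilbert's Theorem~$90$, the long exact cohomology sequence identifies $H^1(G,\mathcal K_{\mathcal T}^\times)$ with the cokernel of the localization map $\mathcal K^{'\times}\to\bigoplus_{P'\in\mathcal T'}\mathcal K_{P'}^{'\times}/U_{P'}^{(1)}$, which is surjective by weak approximation on $Z'$. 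Hence $H^1(G,\mathcal K_{\mathcal T}^\times)=0$, and (i)--(ii) together give $\mathcal M[n]^G=\mathcal M'[n]$ for all $n$.

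Finally, taking $G$--invariants commutes with inverse limits, and the inclusions $\mathcal M'[\ell^m]\hookrightarrow\mathcal M[\ell^m]$ are compatible with the transition maps defining $T_\ell(\mathcal M)$ and $T_\ell(\mathcal M')$; therefore $T_\ell(\mathcal M)^G=\varprojlim_m\mathcal M[\ell^m]^G=\varprojlim_m\mathcal M'[\ell^m]=T_\ell(\mathcal M')$. The main obstacle is step~(ii): once the exact sequence above is set up $G$--equivariantly, the freeness of the $G$--action on $\mathcal T$ (which is exactly where the hypotheses on $\mathcal S$ and $\mathcal T$ enter) together with Hilbert~$90$ do the work; the remaining care is in tracking which surjectivity on $G$--invariants is actually needed and justifying it by weak approximation on $Z'$.
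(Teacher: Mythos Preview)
Your proof is correct and follows essentially the same approach as the paper: both use Proposition~\ref{interpret} to identify $\mathcal M[n]$ with $\mathcal K^{(n)}_{\mathcal S,\mathcal T}/\mathcal K_{\mathcal T}^{\times n}$, take $G$--cohomology, and reduce to the three facts $(\mathcal K_{\mathcal T}^{\times n})^G=\mathcal K_{\mathcal T'}^{'\times n}$, $(\mathcal K^{(n)}_{\mathcal S,\mathcal T})^G=\mathcal K_{\mathcal S',\mathcal T'}^{'(n)}$, and $H^1(G,\mathcal K_{\mathcal T}^{\times n})=0$. The only differences are in packaging: the paper splits your single exact sequence $1\to\mathcal K_{\mathcal T}^\times\to\mathcal K^\times\to\bigoplus_P\mathcal K_P^\times/U_P^{(1)}\to 1$ into two (separating the residue part $\bigoplus_w\kappa(w)^\times$ from the divisor part ${\rm Div}(\mathcal T)$) and then uses cohomological triviality of induced modules to get $\widehat H^i(G,\mathcal K_{\mathcal T}^\times)\simeq\widehat H^i(G,\mathcal K^\times)$ directly, rather than computing a cokernel via weak approximation on $Z'$; and the paper proves $(\mathcal K^{(n)}_{\mathcal S,\mathcal T})^G=\mathcal K_{\mathcal S',\mathcal T'}^{'(n)}$ via the kernel sequence $0\to\mathcal K^{(n)}_{\mathcal S,\mathcal T}\to\mathcal K_{\mathcal T}^\times\to{\rm Div}(Z\setminus\mathcal S)\otimes\mathbb Z/n\mathbb Z$ together with induced-ness of ${\rm Div}(Z\setminus\mathcal S)$, whereas you argue this directly by pulling back divisors along $\pi$. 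These are equivalent formulations of the same ideas.
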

\begin{proof}
Obviously, the second equality above is a consequence of the first (by passing to a projective limit.) Now, let us fix a natural number $n$.
According to Proposition \ref{interpret}, we have an exact sequence of $\mathbb Z[G]$--modules
$$\xymatrix{0\ar[r]&\mathcal K_{\mathcal T}^{\times n}\ar[r] &\mathcal K_{\mathcal S, \mathcal T}^{(n)}\ar[r] &\mathcal M[n]\ar[r] &0
}\,.$$
If we apply $G$--cohomology to the exact sequence above, we obtain an exact sequence
$$\xymatrix{0\ar[r]&\left(\mathcal K_{\mathcal T}^{\times n}\right)^G\ar[r] &\left(\mathcal K_{\mathcal S, \mathcal T}^{(n)}\right)^G\ar[r] &\mathcal M[n]^G\ar[r] &\widehat{\text{H}}^1(G, \mathcal K_{\mathcal T}^{\times n})
}\,.$$
This shows that the first equality in the Theorem follows if we show that
\begin{equation}\label{equalities}\left(\mathcal K_{\mathcal T}^{\times n}\right)^G=\mathcal K_{\mathcal T'}^{'\times n}, \quad \left(\mathcal K_{\mathcal S, \mathcal T}^{(n)}\right)^G=\mathcal K_{\mathcal S', \mathcal T'}^{'(n)}, \quad
\widehat{\text{H}}^1(G, \mathcal K_{\mathcal T}^{\times n})=0\,.\end{equation}

\noindent {\bf Step 1.} The first equality above is equivalent to
$\mathcal K_{\mathcal T}^{\times n}\cap \mathcal K'^\times=\mathcal
K_{\mathcal T'}^{'\times n}$, which was proved above.
\medskip

\noindent {\bf Step 2.} The third equality is proved as follows. First, we observe that since $\mathcal K^\times_{\mathcal T}$ has no torsion, we have an isomorphism of $\mathbb Z[G]$--modules
$\mathcal K^\times_{\mathcal T}\overset\sim\longrightarrow \mathcal K^{\times n}_{\mathcal T}$ given by the $n$--power map. Now,
we let $\mathcal K^\times_{(\mathcal T)}:=\{f\in \mathcal K\mid f(w)\ne 0, \text{ for all } w\in\mathcal T\}$. We have exact sequences of $\mathbb Z[G]$--modules
$$\xymatrix{0\ar[r]&\mathcal K^\times_{\mathcal T}\ar[r] & \mathcal K^\times_{(\mathcal T)} \ar[r]^<<<<<{\text{ev}_\mathcal T} &\bigoplus\limits_{w\in \mathcal T}\kappa(w)^\times\ar[r] &0
},\
\xymatrix{0\ar[r]&\mathcal K^\times_{(\mathcal T)}\ar[r] & \mathcal K^\times \ar[r]^<<<<<{\text{div}_\mathcal T} &\text{Div}(\mathcal T)\ar[r] &0\,,
}$$
where $\kappa(w)$ is the residue field associated to $w$ (in fact isomorphic to $\kappa$), the map $\text{ev}_{\mathcal T}$ is the $\mathcal T$--evaluation map taking $f\in \mathcal K_{(\mathcal T)}^\times$
into $(f(w))_{w\in \mathcal T}$ and $\text{div}_\mathcal T(f)$ is the piece of the divisor of $f$ supported on $\mathcal T$, for all $f\in\mathcal K^\times$. Note that the maps $\text{ev}_{\mathcal T}$ and $\text{div}_{\mathcal T}$ are surjective as a consequence of the weak approximation theorem applied to the (independent) valuations of $\mathcal K$ associated to the points in $\mathcal T$. Now, since $\mathcal K/\mathcal K'$ is unramified at points on $\mathcal T'$ and $\kappa$ is algebraically closed, we have isomorphisms of $\mathbb Z[G]$--modules
$$\bigoplus\limits_{w\in \mathcal T}\kappa(w)^\times\overset\sim\longrightarrow \mathbb Z[G]\otimes_{\mathbb Z}\bigoplus_{v\in \mathcal T'}\kappa(v)^\times,\qquad \text{Div}(\mathcal T)\overset\sim\longrightarrow \mathbb Z[G]\otimes_{\mathbb Z}\text{Div}(\mathcal T'),$$
where $G$ acts trivially on $\kappa(v)^\times$ and $\text{Div}(\mathcal T')$. This shows that $\bigoplus\limits_{w\in \mathcal T}\kappa(w)^\times$ and $\text{Div}(\mathcal T)$ are induced $G$--modules and therefore $G$--cohomologically trivial.
Consequently, we have group--isomorphisms
$$\widehat {\text{H}}^i(G, \mathcal K^\times_{\mathcal T})\overset\sim\longrightarrow \widehat {\text{H}}^i(G, \mathcal K^\times_{(\mathcal T)})\overset\sim\longrightarrow \widehat {\text{H}}^i(G, \mathcal K^\times)\,,
$$
for all $i\in\mathbb Z$. In particular, Hilbert's Theorem 90 leads to
$$\widehat {\text{H}}^1(G, \mathcal K^{\times n}_{\mathcal T})\overset\sim\longrightarrow\widehat {\text{H}}^1(G, \mathcal K^\times_{\mathcal T})\overset\sim\longrightarrow \widehat {\text{H}}^1(G, \mathcal K^\times)=0,$$
which proves the third equality in (\ref{equalities}) above.
\medskip

\noindent {\bf Step 3.}
Now, we proceed to proving the second equality in (\ref{equalities}). By the definition of $\mathcal K^{(n)}_{\mathcal S, \mathcal T}$, we have the following exact sequence of $G$--modules
$$\xymatrix{0\ar[r]&\mathcal K^{(n)}_{\mathcal S, \mathcal T}\ar[r] & \mathcal K^\times_{\mathcal T} \ar[r]^<<<<<{\delta_{\mathcal S, n}} &\text{Div}(Z\setminus \mathcal S)\otimes\mathbb Z/n\mathbb Z}\,,$$
where $\delta_{\mathcal S, n}(f)$ is the piece of the divisor of $f$ supported away from $\mathcal S$ modulo $n$, for all $f\in \mathcal K^\times_{\mathcal T}$. However, since $\mathcal K/\mathcal K'$ is unramified away from $\mathcal S$ and $\kappa$ is algebraically closed, we have an isomorphism of $G$--modules
$$\text{Div}(Z\setminus \mathcal S)\overset\sim\longrightarrow \text{Div}(Z'\setminus \mathcal S')\otimes_{\mathbb Z}\mathbb Z[G]\,.$$
Therefore, after taking $G$--invariants in the exact sequence above, we obtain an exact sequence
$$\xymatrix{0\ar[r]&\left (\mathcal K^{(n)}_{\mathcal S, \mathcal T}\right )^G\ar[r] & \mathcal K^{'\times}_{\mathcal T'} \ar[r]^<<<<<{\delta_{\mathcal S', n}} &\text{Div}(Z'\setminus \mathcal S')\otimes\mathbb Z/n\mathbb Z}\,,$$
at the level of $Z'$. This shows that $\left (\mathcal K^{(n)}_{\mathcal S, \mathcal T}\right )^G=\mathcal K_{\mathcal S', \mathcal T'}^{'(n)}$, as required.
\end{proof}

\begin{definition} We let $g_Z$ denote the genus of $Z$. If ${\rm char}(\kappa)=p$, we let $\gamma_Z$ denote the Hasse-Witt invariant of $Z$.
\end{definition}

\begin{remark}\label{genus}
Recall that if $J_Z$ is the Jacobian of $Z$, we have
$$2g_Z=\text{\rm rank}_{\mathbb Z_\ell}T_\ell(J_Z)=\text{\rm dim}_{\mathbb F_\ell} J_Z[\ell],$$
for all prime numbers $\ell\ne\text{\rm char}(\kappa)$.
Also, if $\text{\rm char}(\kappa)=p>0$, recall that
$$\gamma_Z=\text{\rm rank}_{\mathbb Z_p}T_p(J_Z)=\text{\rm dim}_{\mathbb F_p} J_Z[p]\,,\qquad g_Z\geq\gamma_Z.$$
The reader who is unfamiliar with the more standard definitions of $g_Z$ and $\gamma_Z$ should feel
free to take the above equalities as definitions.
\end{remark}
\medskip

\noindent The following well-known theorem captures the behavior of the invariants $g$ and $\gamma$ in Galois covers.

\begin{theorem}\label{Hurwitz}
Assume that $Z\to Z'$ is a (finite) $G$--Galois cover of smooth, projective, connected curves over an algebraically closed field $\kappa$ and let $\mathcal S$ be a $G$--invariant set of closed points on $Z$,
containing the ramification locus of the cover. The following hold.
\begin{enumerate}
  \item {\bf (The Hurwitz genus formula.)} If $|G|$ is coprime to $\text{\rm char}(\kappa)$, then
  $$(2g_Z-2)=|G|(2g_{Z'}-2) + \sum_{w\in \mathcal S}(e_w-1)\,,$$
  where
  $e_w$ denotes the ramification index of $w$ in the cover $Z\to Z'$.
  \item {\bf (The Deuring-Shafarevich formula.)} If $\text{\rm char}(\kappa)=p>0$ and $G$ is a $p$--group, then
  $$(\gamma_Z-1)=|G|(\gamma_{Z'}-1) + \sum_{w\in \mathcal S}(e_w-1)\,.$$
\end{enumerate}
\end{theorem}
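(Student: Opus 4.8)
The plan is to derive both formulas from a single mechanism: the multiplicativity of the compactly supported Euler characteristic along the finite \'etale cover $\pi\colon U\to U'$ obtained by deleting the non-\'etale locus, applied with $\mathbb Q_\ell$--coefficients ($\ell\neq p$) in part (1) and with $\mathbb F_p$--coefficients in part (2), together with the descriptions of $g_Z$ and $\gamma_Z$ in terms of \'etale (co)homology recalled in Remark \ref{genus}. Set $U':=Z'\setminus\mathcal S'$ and $U:=\pi^{-1}(U')=Z\setminus\mathcal S$; since $\mathcal S$ contains the ramification locus, $\pi|_U$ is a finite \'etale $G$--Galois cover. As $\kappa$ is algebraically closed all residue degrees equal $1$, so above $v\in\mathcal S'$ there are exactly $|G|/e_v$ points, each of ramification index $e_v$; hence $|\mathcal S|=\sum_{v\in\mathcal S'}|G|/e_v$, and a short computation gives
\begin{equation}\label{ram-count}
\sum_{w\in\mathcal S}(e_w-1)=|G|\cdot|\mathcal S'|-|\mathcal S|.
\end{equation}
Everything then reduces to proving, for $\Lambda$ equal to $\mathbb Q_\ell$ in part (1) and to $\mathbb F_p$ in part (2), the identity
\begin{equation}\label{EC-mult}
\chi_c(U,\Lambda)=|G|\cdot\chi_c(U',\Lambda).
\end{equation}

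I would first evaluate both sides of \eqref{EC-mult}. For any constructible coefficient sheaf, the excision long exact sequence for the open--closed decomposition $U\hookrightarrow Z\hookleftarrow\mathcal S$ gives $\chi_c(U,\Lambda)=\chi(Z,\Lambda)-|\mathcal S|$, since $H^\ast(\mathcal S,\Lambda)$ is concentrated in degree $0$ with total $\Lambda$--dimension $|\mathcal S|$. For $\Lambda=\mathbb Q_\ell$ with $\ell\neq p$ one has $H^0(Z,\mathbb Q_\ell)=H^2(Z,\mathbb Q_\ell)=\mathbb Q_\ell$ and $\dim_{\mathbb Q_\ell}H^1(Z,\mathbb Q_\ell)=2g_Z$ (Remark \ref{genus}), so $\chi_c(U,\mathbb Q_\ell)=2-2g_Z-|\mathcal S|$. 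For $\Lambda=\mathbb F_p$ one has $H^0_{\mathrm{et}}(Z,\mathbb F_p)=\mathbb F_p$, $H^2_{\mathrm{et}}(Z,\mathbb F_p)=0$ (the $p$--cohomological dimension of a curve in characteristic $p$ is at most $1$; equivalently, in the Artin--Schreier sequence $0\to\mathbb F_p\to\mathcal O_Z\xrightarrow{F-1}\mathcal O_Z\to0$ one has $H^2(Z,\mathcal O_Z)=0$ and $F-1$ is surjective on $H^1(Z,\mathcal O_Z)$), and $\dim_{\mathbb F_p}H^1_{\mathrm{et}}(Z,\mathbb F_p)=\dim_{\mathbb F_p}\ker\!\bigl(F-1\mid H^1(Z,\mathcal O_Z)\bigr)=\gamma_Z$, the last equality being the classical description of the Hasse--Witt invariant as the rank of the bijective (unit--root) part of Frobenius on $H^1(Z,\mathcal O_Z)$, which coincides with the $p$--rank of $J_Z$ used in Remark \ref{genus}. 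Hence $\chi_c(U,\mathbb F_p)=1-\gamma_Z-|\mathcal S|$. The same formulas hold with $Z',U',\mathcal S'$ in place of $Z,U,\mathcal S$.

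Next I would establish \eqref{EC-mult}. Since $\pi|_U$ is finite \'etale, $\chi_c(U,\Lambda)=\chi_c(U',\pi_\ast\Lambda)$, where $\pi_\ast\Lambda$ is the lisse sheaf on $U'$ attached to the $\pi_1(U')$--representation $\Lambda[G]$, the action factoring through $\pi_1(U')\twoheadrightarrow G$. In part (1), since $p\nmid|G|$, every inertia subgroup of $G$ at a point of $\mathcal S'$ is cyclic of order prime to $p$, so $\pi_\ast\mathbb Q_\ell$ is tamely ramified along $\mathcal S'$, all of its Swan conductors vanish, and the Grothendieck--Ogg--Shafarevich formula gives $\chi_c(U',\pi_\ast\mathbb Q_\ell)=|G|\cdot\chi_c(U',\mathbb Q_\ell)$, the rank of $\pi_\ast\mathbb Q_\ell$ being $|G|$. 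In part (2), since $G$ is a $p$--group the group algebra $\mathbb F_p[G]$ is a local ring with residue field $\mathbb F_p$ (its augmentation ideal is nilpotent), so $\mathbb F_p[G]$ admits a composition series as an $\mathbb F_p[G]$--module, hence as a $\pi_1(U')$--module, all of whose graded pieces are the trivial module $\mathbb F_p$; correspondingly $\pi_\ast\mathbb F_p$ is an iterated extension of $|G|$ copies of the constant sheaf $\underline{\mathbb F_p}$, and the additivity of $\chi_c$ in short exact sequences of constructible sheaves gives $\chi_c(U',\pi_\ast\mathbb F_p)=|G|\cdot\chi_c(U',\mathbb F_p)$. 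Substituting the values computed above into \eqref{EC-mult} and rearranging by means of \eqref{ram-count} then yields $2g_Z-2=|G|(2g_{Z'}-2)+\sum_{w\in\mathcal S}(e_w-1)$ in part (1) and $\gamma_Z-1=|G|(\gamma_{Z'}-1)+\sum_{w\in\mathcal S}(e_w-1)$ in part (2).

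I expect the crux to be \eqref{EC-mult} for $\mathbb F_p$--coefficients in part (2): one must see that \emph{wild} ramification, which is genuinely present whenever $G$ is a nontrivial $p$--group, is invisible to $\chi_c$ of an $\mathbb F_p$--sheaf whose monodromy group is a $p$--group, precisely because such a sheaf is an iterated extension of constant sheaves and hence has class $|G|\cdot[\underline{\mathbb F_p}]$ in the Grothendieck group of constructible sheaves, with no Swan--type correction term. This is exactly what makes the Deuring--Shafarevich formula depend only on the ramification indices $e_w$ and not on the full higher ramification filtration, in sharp contrast with the wild case of the ordinary Riemann--Hurwitz formula for $g_Z$. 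A secondary and routine point is to reconcile $\dim_{\mathbb F_p}H^1_{\mathrm{et}}(Z,\mathbb F_p)=\gamma_Z$ with the definition of $\gamma_Z$ via $T_p(J_Z)$, for which one invokes the classical identification of the Hasse--Witt invariant with the $p$--rank of the Jacobian. For part (1) one could alternatively avoid cohomology altogether and simply quote the classical identity $K_Z=\pi^\ast K_{Z'}+\mathfrak d_{Z/Z'}$ of canonical divisors together with $\deg\mathfrak d_{Z/Z'}=\sum_{w\in\mathcal S}(e_w-1)$, the latter being valid because tameness ($p\nmid|G|$) forces the different to have valuation $e_w-1$ at each $w$.
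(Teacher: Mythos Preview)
Your argument is correct. The paper does not actually prove this theorem; it simply cites Hartshorne (Chpt.~IV, Cor.~2.4) for part (1) and Nakajima and Subrao for part (2), treating both formulas as classical input. So you have supplied a genuine proof where the paper offers none.

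Your approach is also genuinely different from the standard proofs one finds in those references. Hartshorne's argument for the tame Hurwitz formula goes through the relation $K_Z=\pi^\ast K_{Z'}+\mathfrak d_{Z/Z'}$ of canonical divisors (which you mention as an alternative at the end), while Subrao's proof of the Deuring--Shafarevich formula runs through the Cartier operator on differentials and the space $\Omega_Z(-[\mathcal S])^s$ (essentially the machinery of \S6 of the present paper). By contrast, you give a \emph{single} mechanism for both parts: the multiplicativity of $\chi_c$ along the finite \'etale piece $U\to U'$, with $\mathbb Q_\ell$--coefficients in the tame case and $\mathbb F_p$--coefficients in the $p$--group case. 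The key insight in part (2)---that $\pi_\ast\mathbb F_p$ is a successive extension of constant sheaves because $\mathbb F_p[G]$ is local when $G$ is a $p$--group, so that wild ramification contributes nothing to $\chi_c$---is exactly right and nicely explains why only the $e_w$ appear and not the higher ramification filtration. This unified viewpoint is more conceptual than the classical proofs, at the cost of importing somewhat heavier machinery (Grothendieck--Ogg--Shafarevich, the Artin--Schreier identification $\dim_{\mathbb F_p}H^1_{\mathrm{et}}(Z,\mathbb F_p)=\gamma_Z$). The identification of $\gamma_Z$ with $\dim_{\mathbb F_p}\ker(F-1\mid H^1(Z,\mathcal O_Z))$ is standard but worth one line of justification via the Fitting decomposition of the $p$--linear map $F$, as you indicate.
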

\begin{proof} See Corollary 2.4, in Chpt.~IV of \cite{Hartshorne} for part (1) and
formula (1.1) in \cite{Nakajima} and Theorem 4.2 in \cite{Subrao} for part (2).
\end{proof}

\begin{remark}\label{rewrite-Hurwitz} With notations as above, since $\kappa$ is algebraically closed (which kills inertia, meaning that the residue field extensions
associated to closed points on $Z'$ in the cover $Z\to Z'$ are all trivial) and $\mathcal K/\mathcal K'$ is Galois, we have an equality
$$\sum_{w\in S}e_w=|G|\cdot |\mathcal S'|\,,$$
where $\mathcal S'$ is the set of closed points on $Z'$ sitting
below those in $\mathcal S$. This leads to the following equivalent
formulation of the equalities in the theorem above.
$$(2g_Z-2+ |\mathcal S|)=|G|(2g_{Z'}-2 +|\mathcal S'|)\,,\qquad (\gamma_Z-1+ |\mathcal S|)=|G|(\gamma_{Z'}-1 +|\mathcal S'|)\,.$$
\end{remark}
\medskip

\begin{theorem}\label{ml-free}
With notations as above, let $\ell$ be a prime number and assume that $G$ is an $\ell$--group. Then,  $\mathcal M[\ell]$ is a free $\mathbb F_{\ell}[G]$--module of rank $r_{\mathcal M', \ell}$, where
$$r_{\mathcal M', \ell}:=\left\{
      \begin{array}{ll}
        (2g_{Z'}-2+|\mathcal S'|+|\mathcal T'|), & \hbox{$\ell\ne\text{\rm char}(\kappa)$;} \\
        (\gamma_{Z'}-1+|\mathcal S'|), & \hbox{$\ell=\text{\rm char}(\kappa)$.}
      \end{array}
    \right.$$
\end{theorem}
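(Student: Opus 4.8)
The plan is to use Proposition~\ref{interpret} to reduce the problem to a computation of $\mathbb F_\ell$--dimensions, and then to exploit the fact that over an $\ell$--group a finitely generated $\mathbb F_\ell[G]$--module is free if and only if it is cohomologically trivial, if and only if its Tate cohomology vanishes in a single degree. First I would identify $\mathcal M[\ell]$ with $\mathcal K^{(\ell)}_{\mathcal S,\mathcal T}/\mathcal K_{\mathcal T}^{\times\ell}$ as a $\mathbb Z[G]$--module; by Theorem~\ref{G-invariants} we already know $\mathcal M[\ell]^G=\mathcal M'[\ell]$, and the Tate cohomology argument in Step~2 of the proof of that theorem (the one using Hilbert~90 applied to the $G$--cohomologically induced modules $\bigoplus_{w\in\mathcal T}\kappa(w)^\times$ and $\mathrm{Div}(\mathcal T)$) shows more, namely $\widehat{\mathrm H}^i(G,\mathcal K_{\mathcal T}^{\times\ell})=0$ for all $i$. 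Combining this with the exact sequence $0\to\mathcal K_{\mathcal T}^{\times\ell}\to\mathcal K^{(\ell)}_{\mathcal S,\mathcal T}\to\mathcal M[\ell]\to 0$, the cohomological triviality of $\mathcal M[\ell]$ will follow once I establish that $\mathcal K^{(\ell)}_{\mathcal S,\mathcal T}$ is itself $G$--cohomologically trivial. For that I would resolve $\mathcal K^{(\ell)}_{\mathcal S,\mathcal T}$ via the exact sequence in Step~3 of Theorem~\ref{G-invariants}, $0\to\mathcal K^{(\ell)}_{\mathcal S,\mathcal T}\to\mathcal K_{\mathcal T}^\times\to\mathrm{Div}(Z\setminus\mathcal S)/\ell\to 0$ (surjectivity on the right coming from weak approximation), noting that $\mathrm{Div}(Z\setminus\mathcal S)\cong\mathrm{Div}(Z'\setminus\mathcal S')\otimes_{\mathbb Z}\mathbb Z[G]$ is induced, hence cohomologically trivial, and that $\mathcal K_{\mathcal T}^\times\cong\mathcal K^\times$ in Tate cohomology (again by the induced-module argument of Step~2), so $\widehat{\mathrm H}^\bullet(G,\mathcal K_{\mathcal T}^\times)=\widehat{\mathrm H}^\bullet(G,\mathcal K^\times)$, which vanishes in odd degrees by Hilbert~90 and in even degrees by local/global class field theory over an algebraically closed residue field — or, more simply, one dimension-shifts to see that the Tate cohomology of $\mathcal K^{(\ell)}_{\mathcal S,\mathcal T}$ is concentrated and then pushes through. (A cleaner route: over a finite $\ell$-group $G$, a $\mathbb Z_{(\ell)}[G]$-module with no $\ell'$-torsion that is cohomologically trivial has a $\mathbb Z_{(\ell)}[G]$-projective resolution of length $\le 1$; feeding the two short exact sequences into this yields the triviality of $\mathcal M[\ell]$ as an $\mathbb F_\ell[G]$-module directly.)

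Once $\mathcal M[\ell]$ is known to be a cohomologically trivial $\mathbb F_\ell[G]$--module with $G$ an $\ell$--group, it is automatically free — over the local ring $\mathbb F_\ell[G]$ (local because $G$ is an $\ell$-group) projective equals free — so the only remaining task is to compute its rank. Since the rank of a free $\mathbb F_\ell[G]$--module $M$ equals $\dim_{\mathbb F_\ell}M^G=\dim_{\mathbb F_\ell}(\mathbb F_\ell\otimes_{\mathbb F_\ell[G]}M)$, and by Theorem~\ref{G-invariants} we have $\mathcal M[\ell]^G=\mathcal M'[\ell]$, it suffices to compute $\dim_{\mathbb F_\ell}\mathcal M'[\ell]$, i.e. to do the rank computation on the base curve $Z'$ with the trivial group. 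For that I would use the exact sequence $0\to J'_{\mathcal T'}[\ell]\to\mathcal M'[\ell]\to\mathrm{Div}^0(\mathcal S')\otimes\mathbb Z/\ell\to 0$ coming from (\ref{motive-n-torsion}). The term $\mathrm{Div}^0(\mathcal S')/\ell$ has $\mathbb F_\ell$--dimension $|\mathcal S'|-1$ (it is $\mathbb Z^{|\mathcal S'|-1}$ reduced mod $\ell$, using multidegree zero on a connected curve). For the semiabelian term, the exact sequence (\ref{semiabelian-n-torsion}) gives $\dim_{\mathbb F_\ell}J'_{\mathcal T'}[\ell]=\dim_{\mathbb F_\ell}\tau_{\mathcal T'}[\ell]+\dim_{\mathbb F_\ell}J'[\ell]$. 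When $\ell\ne\mathrm{char}(\kappa)$ the torus $\tau_{\mathcal T'}$ of rank $|\mathcal T'|-1$ (again the diagonal $\kappa^\times$ quotient) contributes $|\mathcal T'|-1$, and $\dim_{\mathbb F_\ell}J'[\ell]=2g_{Z'}$ by Remark~\ref{genus}; summing, $\dim_{\mathbb F_\ell}\mathcal M'[\ell]=2g_{Z'}+(|\mathcal T'|-1)+(|\mathcal S'|-1)=2g_{Z'}-2+|\mathcal S'|+|\mathcal T'|$, as claimed. When $\ell=p=\mathrm{char}(\kappa)$, Remark~\ref{p-torsion} gives $\mathcal M'[p]=\mathcal M'_{\mathcal S',\emptyset}[p]$, so the torus contributes nothing, and $\dim_{\mathbb F_p}J'[p]=\gamma_{Z'}$ by Remark~\ref{genus}; hence $\dim_{\mathbb F_p}\mathcal M'[p]=\gamma_{Z'}+(|\mathcal S'|-1)=\gamma_{Z'}-1+|\mathcal S'|$, matching the second case. (The formulas of Theorem~\ref{Hurwitz} / Remark~\ref{rewrite-Hurwitz} are consistent with this: $\dim_{\mathbb F_\ell}\mathcal M[\ell]=|G|\cdot r_{\mathcal M',\ell}$ on the nose, which is the sanity check that $\mathcal M[\ell]$ is indeed free of the asserted rank.)

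The main obstacle I anticipate is the cohomological-triviality step for $\mathcal K^{(\ell)}_{\mathcal S,\mathcal T}$ — i.e. assembling the two short exact sequences (the $\mathrm{div}_{\mathcal S}$ sequence and the $n$-power sequence) so that the Tate cohomology of $\mathcal M[\ell]$ genuinely vanishes in all degrees, not just degree $0$ and degree $1$ where Theorem~\ref{G-invariants} already delivers it. This is where the structural input really lives: one must know that $\widehat{\mathrm H}^\bullet(G,\mathcal K^\times)=0$ in all degrees (which for $\mathcal K/\mathcal K'$ a cover of curves over an algebraically closed field follows from Hilbert~90 together with the fact that the local and global Brauer-type obstructions are trivial here — there are no interesting $\mathrm{Br}$ contributions because residue fields are algebraically closed), and then dimension-shift carefully through the induced quotients $\mathrm{Div}(\mathcal T)$ and $\mathrm{Div}(Z\setminus\mathcal S)$. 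Everything after that — freeness over a local group ring, and the rank count on $Z'$ — is genuinely routine given Remarks~\ref{genus} and~\ref{p-torsion} and the exact sequences already set up in \S2.
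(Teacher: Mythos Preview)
Your approach is workable but takes a genuinely different route from the paper's, and what you relegate to a ``sanity check'' at the end is in fact the paper's entire argument.

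The paper does \emph{not} attempt to prove cohomological triviality of $\mathcal M[\ell]$ via the Tate cohomology of $\mathcal K^\times$. Instead, it invokes an elementary lemma of Nakajima (Proposition~\ref{G-invariant-lemma}): for $G$ a finite $\ell$--group and $M$ a finitely generated $\mathbb F_\ell[G]$--module, the inequality $\dim_{\mathbb F_\ell} M \geq |G|\cdot\dim_{\mathbb F_\ell} M^G$ already forces $M$ to be free of rank $\dim_{\mathbb F_\ell} M^G$. The proof then consists only of the dimension count you carry out (yielding $\dim_{\mathbb F_\ell}\mathcal M[\ell]=r_{\mathcal M,\ell}$ and likewise for $\mathcal M'$), the equality $\mathcal M[\ell]^G=\mathcal M'[\ell]$ from Theorem~\ref{G-invariants}, and the Hurwitz / Deuring--Shafarevich formulas (Remark~\ref{rewrite-Hurwitz}) to verify $r_{\mathcal M,\ell}=|G|\cdot r_{\mathcal M',\ell}$. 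That is the whole proof; no analysis of $\widehat{\mathrm H}^\bullet(G,\mathcal K^\times)$ is needed.

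Your route does go through, but with two caveats. First, the vanishing of $\widehat{\mathrm H}^\bullet(G,\mathcal K^\times)$ in \emph{all} degrees requires Tsen's theorem (i.e.\ $\mathrm{Br}(\mathcal K^H)=0$ for every subgroup $H\leq G$, since each $\mathcal K^H$ is a function field over the algebraically closed $\kappa$), combined with Hilbert~90 and the two--consecutive--degrees criterion for cohomological triviality; this is correct but is substantial external input that the paper avoids. Second, the surjectivity of $\delta_{\mathcal S,\ell}:\mathcal K_{\mathcal T}^\times\to\mathrm{Div}(Z\setminus\mathcal S)\otimes\mathbb Z/\ell$ is not a weak--approximation statement --- it follows instead from the $\ell$--divisibility of $J_{\mathcal T}(\kappa)$ together with $\mathcal S\neq\emptyset$ (adjust the degree by a point of $\mathcal S$, then divide in $J_{\mathcal T}$). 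So your argument buys a direct cohomological--triviality statement at the cost of invoking Tsen, while the paper trades that for the genus formulas and a purely linear--algebraic lemma.
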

\begin{proof} The main ingredients needed in the proof are Theorem \ref{G-invariants} above and the following result.
\begin{proposition}\label{G-invariant-lemma} Let $\ell$ be a prime number, $k$ a field of characteristic $\ell$, $G$ a finite $\ell$--group and $M$ a finitely
generated $k[G]$--module. If
$$\dim_k M\geq |G|\cdot \dim_k M^G,$$
then $M$ is a free $k[G]$--module of rank $r:= \dim_k M^G$.
\end{proposition}
\begin{proof} See \cite{Nakajima}, Proposition 2, \S4. \end{proof}
\noindent Now, the idea is to apply this proposition to $M:=\mathcal M[\ell]$ and $k:=\mathbb F_{\ell}$. The exact sequences (\ref{semiabelian-n-torsion}) and (\ref{motive-n-torsion})
for $n:=\ell$ give the following equality.
$$\dim_{\mathbb F_\ell}\mathcal M[\ell]=\dim_{\mathbb F_\ell}J_Z[\ell]+\dim_{\mathbb F_\ell}\tau_{\mathcal T}[\ell]+\dim_{\mathbb F_\ell}\text{Div}^0(\mathcal S)\otimes\mathbb F_{\ell}\,.$$
However, from the definitions we have
$$\dim_{\mathbb F_\ell}\text{Div}^0(\mathcal S)\otimes\mathbb F_{\ell}=|\mathcal S|-1, \qquad \dim_{\mathbb F_\ell}\tau_{\mathcal T}[\ell]=\left\{
                                                \begin{array}{ll}
                                                |\mathcal T|-1, & \hbox{$\ell\ne\text{char}(\kappa)$;} \\
                                                0, & \hbox{$\ell=\text{char}(\kappa)$.}
                                              \end{array}
                                            \right.$$
Now, we combine these equalities with Remark \ref{genus} above, to conclude that
$$\dim_{\mathbb F_\ell}\mathcal M[\ell]=r_{\mathcal M, \ell}\,,$$
for all prime numbers $\ell$. Obviously, the same equality holds for the $1$--motive $\mathcal M'$. Next, note that since the cover $Z\to Z'$ is unramified at points
in $\mathcal T'$ and $\kappa$ is algebraically closed, we have $$|\mathcal T|=|G|\cdot|\mathcal T'|\,.$$
Next, we combine the last two equalities with Remark \ref{rewrite-Hurwitz} and Theorem \ref{G-invariants} to obtain
$$\dim_{\mathbb F_\ell}\mathcal M[\ell]=|G|\cdot \dim_{\mathbb F_\ell}\mathcal M'[\ell]=|G|\cdot\dim_{\mathbb F_\ell} \mathcal M[\ell]^G\,.$$
Finally, as promised, we apply Proposition \ref{G-invariant-lemma} to $M:=\mathcal M[\ell]$ and $k:=\mathbb F_{\ell}$ to conclude that $\mathcal M[\ell]$ is
a free $F_{\ell}[G]$--module of rank $r_{\mathcal M', \ell}$.
\end{proof}
\bigskip

\noindent
Now, we are ready to pass from the study of $\mathcal M[\ell]$ to that of $T_{\ell}(\mathcal M)$. The following remark makes this possible.
\begin{remark}
Let $\ell$ be a prime number.
Since $\mathcal M[\ell^\infty]:=\underset m{\underset{\longrightarrow}\lim}\mathcal M[\ell^m]$ is a divisible $\mathbb Z_\ell$--module (see Remark \ref{l-infinity})
and $T_{\ell}(\mathcal M)$ is $\mathbb Z_\ell$--free, we have a canonical exact sequence of $\mathbb Z_\ell[G]$--modules
\begin{equation}\label{l-torsion}
0\longrightarrow T_\ell(\mathcal M)\overset{\times\ell}{\longrightarrow} T_{\ell}(\mathcal M)\longrightarrow \mathcal M[\ell]\longrightarrow 0\,.
\end{equation}
Indeed, if we apply the (exact) functor $\ast\to{\rm Hom}_{\mathbb Z_\ell}(\ast,\, \mathcal M[\ell^\infty])$ to the exact sequence
$$\xymatrix{0\ar[r]&\mathbb Z_\ell\ar[r] &\mathbb Q_\ell\ar[r] &\mathbb Q_\ell/\mathbb Z_\ell\ar[r] &0
}$$
and make the canonical identification $T_{\ell}(\mathcal M)\overset\sim\to {\rm Hom}_{\mathbb Z_\ell}(\mathbb Q_\ell/\mathbb Z_\ell,\, \mathcal M[\ell^\infty])$,
we obtain the following exact sequence
$$\xymatrix{0\ar[r]&T_{\ell}(\mathcal M)\ar[r] &{\rm Hom}_{\mathbb Z_\ell}(\mathbb Q_\ell,\, \mathcal M[\ell^\infty])\ar[r] &\mathcal M[\ell^\infty]\ar[r] &0
\,.}$$ Now, exact sequence (\ref{l-torsion}) is obtained by applying
the snake lemma to the multiplication by $\ell$--endomorphism of the
exact sequence above, and remarking that multiplication by $\ell$ is
an automorphism of the $\mathbb Q_{\ell}$--vector space ${\rm
Hom}_{\mathbb Z_\ell}(\mathbb Q_\ell,\, \mathcal M[\ell^\infty])$.
\end{remark}
\medskip

\begin{theorem}\label{connected} As above, let $\pi: Z\to Z'$ be a (finite) $G$--Galois cover of smooth, projective, connected curves defined
over an algebraically closed field $\kappa$. Let $\mathcal S$ and
$\mathcal T$ denote two finite, non-empty $G$--invariant sets of
closed points on $Z$, such that $\mathcal S$ contains the
ramification locus $\mathcal S_{\text{\rm ram}}$ of the cover. Let
$\mathcal M$ and $\mathcal M'$ be the Picard $1$--motives associated
to the data $(Z, \kappa, \mathcal S, \mathcal T)$ and $(Z', \kappa,
\mathcal S':=\pi(\mathcal S), \mathcal T':=\pi(\mathcal T))$,
respectively. Let $\ell$ be a prime. Then
\begin{enumerate}
  \item $T_\ell(\mathcal M)$ is a projective $\mathbb Z_\ell[G]$--module of finite rank;
  \item  If $G$ is an $\ell$--group, then $T_\ell(\mathcal M)$ is a free $\mathbb Z_\ell[G]$--module of rank $r_{\mathcal M', \ell}$.
\end{enumerate}
\end{theorem}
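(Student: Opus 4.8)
The plan is to reduce the general statement to the case where $G$ is an $\ell$-group, which is already essentially handled by Theorem \ref{ml-free} together with the multiplication-by-$\ell$ exact sequence (\ref{l-torsion}). Let me outline the steps.

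\medskip

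\noindent\emph{Step 1: The $\ell$-group case implies freeness.} Assume first that $G$ is an $\ell$-group. By Theorem \ref{ml-free}, $\mathcal M[\ell]$ is a free $\mathbb F_\ell[G]$-module of rank $r_{\mathcal M',\ell}$. The exact sequence (\ref{l-torsion}),
$$0\longrightarrow T_\ell(\mathcal M)\overset{\times\ell}{\longrightarrow} T_\ell(\mathcal M)\longrightarrow \mathcal M[\ell]\longrightarrow 0\,,$$
exhibits $\mathcal M[\ell]$ as $T_\ell(\mathcal M)/\ell T_\ell(\mathcal M)$. Since $T_\ell(\mathcal M)$ is a finitely generated $\mathbb Z_\ell[G]$-module and $\mathbb Z_\ell[G]$ is a (semilocal, complete) Noetherian ring with Jacobson radical containing $\ell$, the topological Nakayama lemma lets me lift an $\mathbb F_\ell[G]$-basis of $\mathcal M[\ell]$ to a generating set of $T_\ell(\mathcal M)$ of cardinality $r_{\mathcal M',\ell}$; this gives a surjection $\pi: \mathbb Z_\ell[G]^{r_{\mathcal M',\ell}}\twoheadrightarrow T_\ell(\mathcal M)$. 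To see $\pi$ is injective, reduce mod $\ell$: the induced map $\mathbb F_\ell[G]^{r_{\mathcal M',\ell}}\to \mathcal M[\ell]$ is a surjection of $\mathbb F_\ell[G]$-modules of the same finite $\mathbb F_\ell$-dimension ($|G|\cdot r_{\mathcal M',\ell}$), hence an isomorphism, so $\ker\pi\subseteq \ell\cdot\mathbb Z_\ell[G]^{r_{\mathcal M',\ell}}$; but $\ker\pi$ is $\ell$-torsion-free (being a submodule of a free $\mathbb Z_\ell$-module), so $\ker\pi=\ell\ker\pi$, and then $\ker\pi=0$ by Nakayama (or by completeness). This proves (2).

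\medskip

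\noindent\emph{Step 2: Reduction of (1) to the $\ell$-group case.} For general $G$, projectivity of a finitely generated $\mathbb Z_\ell[G]$-module is equivalent, by a standard criterion, to being $G$-cohomologically trivial, which in turn (since $T_\ell(\mathcal M)$ is $\mathbb Z_\ell$-free, hence has finite projective dimension over $\mathbb Z_\ell$) can be checked on an $\ell$-Sylow subgroup $G_\ell\subseteq G$: it suffices that $\widehat{\mathrm H}^i(H, T_\ell(\mathcal M))=0$ for all subgroups $H\subseteq G_\ell$ and (say) two consecutive values of $i$. Now for any such $H$, the data $(Z, \kappa, \mathcal S, \mathcal T)$ with the $H$-action is again of the type considered, with quotient $Z/H$ in place of $Z'$; so Step 1, applied with $H$ in place of $G$, shows $T_\ell(\mathcal M)$ is $\mathbb Z_\ell[H]$-free, hence certainly $H$-cohomologically trivial. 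Therefore $T_\ell(\mathcal M)$ is $G$-cohomologically trivial, hence a projective $\mathbb Z_\ell[G]$-module; finite rank is clear from the exact sequence expressing $T_\ell(\mathcal M)$ as an extension of ${\rm Div}^0(\mathcal S)\otimes\mathbb Z_\ell$ by $T_\ell(J_{\mathcal T})$.

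\medskip

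\noindent\emph{Main obstacle.} The genuinely substantive input — the rank computation and freeness in the $\ell$-group case — is already packaged in Theorem \ref{ml-free}, so the remaining work here is largely formal homological algebra. The one point requiring care is the passage from $\mathcal M[\ell]$-freeness to $T_\ell(\mathcal M)$-freeness in Step 1, and in particular the application of Nakayama's lemma in the $\mathbb Z_\ell[G]$-setting: one must make sure that $T_\ell(\mathcal M)$ is finitely generated over $\mathbb Z_\ell[G]$ (which follows from the Tate module being $\mathbb Z_\ell$-finitely generated) and that $\ell$ lies in the Jacobson radical of $\mathbb Z_\ell[G]$ when $G$ is an $\ell$-group. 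The other mild subtlety is ensuring, in Step 2, that the cohomological-triviality criterion one invokes only needs vanishing for subgroups of an $\ell$-Sylow subgroup; this is where $\mathbb Z_\ell$-freeness of $T_\ell(\mathcal M)$ (bounded projective dimension over $\mathbb Z_\ell$) is used.
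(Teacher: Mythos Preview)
Your proof is correct and follows essentially the same route as the paper's, with only a cosmetic reordering: the paper first establishes $G$-cohomological triviality of $\mathcal M[\ell]$ (via Theorem \ref{ml-free} applied to an $\ell$-Sylow), propagates this to $T_\ell(\mathcal M)$ through the long exact sequence attached to (\ref{l-torsion}), and then deduces freeness in the $\ell$-group case from the locality of $\mathbb Z_\ell[G]$; you instead prove freeness in the $\ell$-group case directly by Nakayama and then harvest cohomological triviality for general $G$ from that. The substantive inputs---Theorem \ref{ml-free} and the exact sequence (\ref{l-torsion})---are identical, and your direct Nakayama lift in Step 1 is equivalent to the paper's ``local $\Rightarrow$ projective $=$ free'' step.
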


\begin{proof} Let $\ell$ be a fixed prime number. Theorem \ref{ml-free}
implies that $\mathcal M[\ell]$ is $G$--cohomologically trivial. Indeed, since $\mathcal M[\ell]$ is an $\ell$--group, it suffices to show that
$\mathcal M[\ell]$ is $G^{(\ell)}$--cohomologically trivial, where $G^{(\ell)}$ is an $\ell$--Sylow subgroup of $G$.
However, Theorem \ref{ml-free} applied to the $G^{(\ell)}$--cover $Z\to Z^{(\ell)}$, where $Z^{(\ell)}$ is a smooth projective model associated to the field $\mathcal K^{G^{(\ell)}}$ (maximal
subfield of $\mathcal K$ fixed by $G^{(\ell)}$),
implies that $\mathcal M[\ell]$ is $\mathbb F_{\ell}[G^{(\ell)}]$--free and therefore $G^{(\ell)}$--cohomologically trivial.
Now, the long exact sequence in $G'$--cohomology associated to the short exact sequence (\ref{l-torsion}) for an arbitrary subgroup
$G'$ of $G$ implies that the multiplication by $\ell$ maps give group isomorphisms
$$\widehat H^i(G', T_\ell(\mathcal M))\overset{\times \ell}{\simeq} \widehat H^i(G', T_\ell(\mathcal M))\,,$$
for all $i\in\mathbb Z$. However, since the cohomology groups $\widehat H^i(G', T_\ell(\mathcal M))$ are finitely generated (torsion)
$\mathbb Z_\ell$--modules, this implies that $\widehat H^i(G', T_\ell(\mathcal M))=0$, for all $i$ and $G'$ as above.
This shows that $T_\ell(\mathcal M)$ is $G$--cohom. trivial. Since it is $\mathbb Z_\ell$--free, $T_\ell(\mathcal M)$ is
$\mathbb Z_\ell[G]$--projective, which proves part (1).

Next, assume that $G$ is an $\ell$--group. Then the ring $\mathbb Z_{\ell}[G]$ is local. Consequently, the finitely generated projective
$\mathbb Z_{\ell}[G]$--module $T_{\ell}(\mathcal M)$ is $\mathbb Z_{\ell}[G]$--free (see Lemma 1.2 of \cite{Milnor}) of rank $r$, say. However, the exact sequence  (\ref{l-torsion})
gives an $\mathbb F_\ell[G]$--module isomorphism
$$T_{\ell}(\mathcal M)\otimes_{\mathbb Z_{\ell}[G]}\mathbb F_{\ell}[G]\simeq\mathcal M[\ell]\,.$$
Consequently, Theorem \ref{ml-free} implies that $r=r_{\mathcal M', \ell}$, concluding the proof of part (2).
\end{proof}
\bigskip

Next, we extend the theorem above to a situation of particular
arithmetic interest to us, in which the curve $Z$ is not necessarily
connected. For that purpose, we let $\kappa_0$ denote a perfect
field of arbitrary characteristic (e.g. $\kappa_0:=\mathbb F_q$, with $q$ a
power of a prime $p$) and we let $Z_0\rightarrow Z'_0$ denote a $G$--Galois
cover of smooth, projective curves over $\kappa_0$. We assume that
$Z'_0$ is geometrically connected, but $Z_0$ may not satisfy this
property. More explicitly, if we let $\kappa$ denote the algebraic
closure of $\kappa_0$, then the first of the smooth projective
$\kappa$--curves $Z':=Z'_0\times_{\kappa_0}\kappa$  and
$Z:=Z_0\times_{\kappa_0}\kappa$ is connected, but the second may not
be. This is equivalent to saying that $\kappa_0$ is algebraically
closed in the function field $\mathcal K'_0:=\kappa_0(Z'_0)$, but
may not be so inside $\mathcal K_0:=\kappa_0(Z_0)$. Let $\mathcal
K':=\mathcal K'_0\otimes_{\kappa_0}\kappa$ be the field of rational
functions on $Z'$ and $\mathcal K:=\mathcal
K_0\otimes_{\kappa_0}\kappa$ the ring of rational functions on $Z$.
Then $Z\rightarrow Z'$ is a $G$--Galois cover of smooth projective
curves over $\kappa$ and $\mathcal K$ is a $G$--Galois $\mathcal
K'$--algebra. In fact, if we let $H:=\text{Gal}(\mathcal
K_0/\mathcal K_0\cap\mathcal K')$, then $Z$ has $[G:H]$ distinct
($\kappa$--isomorphic) connected components and $\mathcal K$ is a
direct sum of as many fields which are mutually isomorphic as
$\mathcal K'$--algebras (isomorphic to a field compositum
${}_c\mathcal K:=\mathcal K_0\cdot\kappa$.) If we fix such a
compositum ${}_c\mathcal K$ (viewed as a quotient of $\mathcal K$ by
one of its maximal ideals), then ${}_c\mathcal K$ is an $H$--Galois
extension of $\mathcal K'$ and it is the function field of a smooth,
projective, connected $\kappa$--curve ${}_cZ$ which is an
$H$--Galois cover of $Z'$. (Obviously, the subscript ``$c$'' above
stands for ``connected''.)

$$\xymatrix
{& Z\ar[dl]\ar[d]_{G}&{}_cZ\ar@{.>}[dll]\ar@{_(->}[l]\ar[dl]^{H}\\
Z_0\ar[d]_{G} & Z'\ar[dl]&\\
Z'_0& &}\qquad\qquad
\xymatrix
{& \mathcal K\ar@{>>}[r]&{}_c\mathcal K\\
\mathcal K_0\ar@{.>}[urr]\ar[ur] & \mathcal K'\ar[u]^{G}\ar[ur]_{H}&\\
\mathcal K'_0\ar[u]^{G}\ar[ur]& &}
$$
\medskip

\begin{theorem}\label{main-ct} Let $\pi_0: Z_0 \to Z_0'$ be a (finite) $G$--cover of smooth, projective curves defined over a perfect field $\kappa_0$.
Let $\kappa$ denote the algebraic closure of $\kappa_0$, let
$Z:=Z_0\times_{\kappa_0}\kappa$ and
$Z':=Z'_0\times_{\kappa_0}\kappa$ and $\pi:=\pi_0\times\mathbf
1_\kappa$. Assume that $Z'$ is connected. Let $S'_0$ and $T'_0$ be
finite, disjoint sets of closed points on $Z'_0$, such that $S_0$ is
non-empty and contains the ramified locus of the cover
$Z_0\longrightarrow Z'_0$. Let $\mathcal S$ and $\mathcal T$ be the
sets of closed points on $Z$ sitting above points in $S'_0$ and
$T'_0$, respectively. Let $\mathcal M$ and $\mathcal M'$ denote the
Picard $1$--motives associated to the sets of data $(Z, \kappa,
\mathcal S, \mathcal T)$ and $(Z', \kappa, \mathcal S':=\pi(\mathcal
S), \mathcal T':=\pi(\mathcal T))$, respectively. Let $\ell$ be a
prime. Then
\begin{enumerate}
  \item $T_\ell(\mathcal M)$ is a projective $\mathbb Z_\ell[G]$--module of finite rank;
  \item  If $G$ is an $\ell$--group, then $T_\ell(\mathcal M)$ is a free $\mathbb Z_\ell[G]$--module of rank $r_{\mathcal M', \ell}$.
\end{enumerate}
\end{theorem}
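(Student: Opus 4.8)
The plan is to reduce the possibly disconnected case to the connected case already settled in Theorem \ref{connected}, by decomposing the Picard $1$--motive $\mathcal M$ according to the connected components of $Z$. First I would recall the structure established in the paragraph preceding the theorem: if $H:=\text{Gal}(\mathcal K_0/\mathcal K_0\cap\mathcal K')$ inside $G$, then $Z$ has $[G:H]$ connected components, all $\kappa$--isomorphic, $\mathcal K\cong\bigoplus_{gH\in G/H}{}_c\mathcal K$ as $\mathcal K'$--algebras, and $Z\to Z'$ restricted to the distinguished component ${}_cZ$ is an $H$--Galois cover of the connected curve $Z'$. The sets $\mathcal S$ and $\mathcal T$, being pullbacks of $S'_0$ and $T'_0$, are $G$--invariant, and their intersections with ${}_cZ$ are $H$--invariant and contain the ramification locus (resp. are disjoint, non-empty); write ${}_c\mathcal M$ for the Picard $1$--motive of $({}_cZ, \kappa, \mathcal S\cap{}_cZ, \mathcal T\cap{}_cZ)$.

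The key step is to observe that each of the building blocks of $\mathcal M$ — namely $J_{\mathcal T}$, $\tau_{\mathcal T}$, and $\text{Div}^0(\mathcal S)$ — decomposes as a direct sum indexed by $\pi_0(Z)$, compatibly with the morphism $\delta$ defining the motive. Indeed $\text{Div}^0(\mathcal S)=\bigoplus_{z\in\pi_0(Z)}\text{Div}^0(\mathcal S_z)$ by the multidegree-zero condition being imposed component-wise, $J_{\mathcal T}$ is by definition the product of the generalized Jacobians of the components, and the same holds for $\tau_{\mathcal T}$. Passing to $n$--torsion and then to $\ell$--adic Tate modules (using the interpretation $\mathcal M[n]\cong\mathcal K^{(n)}_{\mathcal S,\mathcal T}/\mathcal K^{\times n}_{\mathcal T}$ of Proposition \ref{interpret}, which respects the product decomposition of the semisimple algebra $\mathcal K$), one gets a $\mathbb Z_\ell$--module decomposition
$$T_\ell(\mathcal M)\cong\bigoplus_{gH\in G/H}T_\ell({}_c\mathcal M),$$
where $G$ permutes the summands through its action on $\pi_0(Z)\cong G/H$, and the stabilizer of the distinguished summand acts on it as $H$ does on $T_\ell({}_c\mathcal M)$. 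In other words, $T_\ell(\mathcal M)\cong\text{Ind}_H^G\,T_\ell({}_c\mathcal M)\cong\mathbb Z_\ell[G]\otimes_{\mathbb Z_\ell[H]}T_\ell({}_c\mathcal M)$ as $\mathbb Z_\ell[G]$--modules.

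From here the two assertions follow formally. For (1): by Theorem \ref{connected}(1) applied to the $H$--Galois cover ${}_cZ\to Z'$, the module $T_\ell({}_c\mathcal M)$ is $\mathbb Z_\ell[H]$--projective, hence $\mathbb Z_\ell[G]\otimes_{\mathbb Z_\ell[H]}(-)$ of it is $\mathbb Z_\ell[G]$--projective (extension of scalars along $\mathbb Z_\ell[H]\hookrightarrow\mathbb Z_\ell[G]$ preserves projectives), and it is visibly of finite rank. For (2): if $G$ is an $\ell$--group then so is $H$, so Theorem \ref{connected}(2) gives that $T_\ell({}_c\mathcal M)$ is $\mathbb Z_\ell[H]$--free of rank $r_{\mathcal M',\ell}$ — note that the relevant quotient curve for the $H$--cover is still $Z'$, so the rank invariant computed from $(g_{Z'},\gamma_{Z'},|\mathcal S'|,|\mathcal T'|)$ is literally $r_{\mathcal M',\ell}$ — and inducing up, $\mathbb Z_\ell[G]\otimes_{\mathbb Z_\ell[H]}\mathbb Z_\ell[H]^{\,r_{\mathcal M',\ell}}\cong\mathbb Z_\ell[G]^{\,r_{\mathcal M',\ell}}$, as desired. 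The main obstacle I anticipate is purely bookkeeping: verifying carefully that the $G$--action on $T_\ell(\mathcal M)$ really is the induced action — i.e. that an element of $G$ not in $H$ genuinely permutes the component-summands rather than acting within one — and that the identification of $\mathcal S'$, $\mathcal T'$, $g_{Z'}$, $\gamma_{Z'}$ for the sub-cover ${}_cZ\to Z'$ matches the data appearing in $r_{\mathcal M',\ell}$; both are routine once the decomposition of $\mathcal K$ as a $G$--algebra is pinned down, but they need to be stated with care.
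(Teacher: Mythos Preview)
Your proposal is correct and follows essentially the same approach as the paper: both arguments establish the induction isomorphism $T_\ell(\mathcal M)\cong \mathbb Z_\ell[G]\otimes_{\mathbb Z_\ell[H]}T_\ell({}_c\mathcal M)$ via the decomposition of $\mathcal K$ over $G/H$ together with Proposition~\ref{interpret}, and then invoke Theorem~\ref{connected}. The paper packages the induction step slightly more explicitly by constructing a $\mathbb Z[H]$--linear section $\iota:{}_c\mathcal K^\times\hookrightarrow\mathcal K^\times$ and checking that $\iota\otimes\mathbf 1$ carries ${}_c\mathcal K^{(n)}_{\mathcal S_c,\mathcal T_c}$ and ${}_c\mathcal K^{\times n}_{\mathcal T_c}$ onto their counterparts in $\mathcal K$, which is exactly the ``bookkeeping'' you anticipate.
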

\begin{proof} Recall that $\mathcal K\overset\sim\rightarrow \mathcal K_0\otimes_{\mathcal K'_0}\mathcal K'$
is a (finite) direct sum of fields which are mutually isomorphic as $\mathcal K'[H]$--algebras. The $\mathcal K'[H]$--algebra morphism $\pi: \mathcal K \twoheadrightarrow {}_c\mathcal K$
induces a $\mathcal K'[H]$--algebra isomorphism between exactly one of these direct summands and ${}_c\mathcal K$.
The inverse of this isomorphism produces a $\mathbb Z[H]$--linear section $\iota: {}_c\mathcal K^\times \hookrightarrow \mathcal K^\times$
of the $\mathbb Z[H]$--module morphism $\mathcal K^\times \twoheadrightarrow {}_c\mathcal K^\times$ induced by $\pi$. It is easy to check that this gives
an isomorphism of $\mathbb Z[G]$--modules
$$
\iota\otimes\mathbf{1}: {}_c\mathcal K^\times\otimes_{\mathbb Z[H]}\mathbb Z[G]\overset{\sim}\longrightarrow \mathcal K^\times\,.$$
Now, let $\mathcal S_c$ and $\mathcal T_c$ denote the sets of closed points on ${}_cZ$, sitting above points in $S_0'$ and $T_0'$, respectively.
Then $\iota\otimes\mathbf{1}$ induces isomorphisms of $\mathbb Z[G]$--modules
$$
\iota\otimes\mathbf{1}: {}_c\mathcal K^{(n)}_{\mathcal S_c, \mathcal T_c}\otimes_{\mathbb Z[H]}\mathbb Z[G]\overset{\sim}\longrightarrow \mathcal K^{(n)}_{\mathcal S, \mathcal T}\,,\qquad
\iota\otimes\mathbf{1}: {}_c\mathcal K^{\times n}_{\mathcal T_c}\otimes_{\mathbb Z[H]}\mathbb Z[G]\overset{\sim}\longrightarrow \mathcal K^{\times n}_{\mathcal T}\,,$$
for all $n\in\mathbb N$ (se Definition \ref{st}.) Consequently, if we denote by $\mathcal M_c$ the Picard $1$--motive associated to data $({}_cZ, \kappa,  \mathcal S_c, \mathcal T_c)$, then we have isomorphisms
$$\mathcal M_c[n]\otimes_{\mathbb Z[H]}\mathbb Z[G]\overset{\sim}\longrightarrow\mathcal M[n]\,,\qquad T_{\ell}(\mathcal M_c)\otimes_{\mathbb Z_\ell[H]}\mathbb Z_\ell[G] \overset{\sim}\longrightarrow T_{\ell}(\mathcal M)\,,$$
of $\mathbb Z/n\mathbb Z[G]$ and $\mathbb Z_\ell[G]$--modules, respectively, for all $n\in\mathbb N$ and all prime numbers $\ell$. Now, since $Z_c$ is connected, Theorem \ref{connected} implies that $T_\ell(\mathcal M_c)$ is $\mathbb Z_\ell[H]$--projective of finite rank and free of rank $r_{\mathcal M', \ell}$, if $G$ is an $\ell$--group. Both parts (1) and (2) of our Theorem are immediate consequences of these facts combined with the second isomorphism above. \end{proof}

\section{\bf Fitting ideals of $\ell$--adic realizations of Picard \\ $1$--motives defined over finite fields}
 In this section, we apply Theorem \ref{main-ct} above in the arithmetically interesting situation where
 $\kappa_0$ is a finite field ($\kappa_0=\mathbb F_q$, where $q$ is a power of a prime $p$) and $G$ is abelian. In this case,
  $\mathcal K_0/\mathcal K'_0$ is
 an abelian $G$--Galois extension of characteristic $p$ global fields (see the set-up and notations leading into Theorem \ref{main-ct}.)
Our goal is to compute the first Fitting ideal
 of $T_\ell(\mathcal M)$ over a certain profinite $\ell$--adic group ring in terms of special values of $G$--equivariant
 $L$--functions, for all prime numbers $\ell$. \medskip

\subsection{\bf Algebraic preliminaries.}
Let $R$ be a commutative ring with $1$, $P$ a finitely generated, projective $R$--module and $f\in{\rm End}_R(P)$. Then,
the determinant $\det_R(f\mid P)$ of $f$ acting on $P$ makes sense and it is defined as follows. We take a finitely generated $R$--module $Q$,
such that $P\oplus Q$ is a (finitely generated) free $R$--module, then we let $f\oplus\mathbf 1_Q\in{\rm End}_R(P\oplus Q)$, where
$\mathbf 1_Q$ is the identity of $Q$, and define
$${\rm det}_R(f\mid P):={\rm det}_R(f\oplus \mathbf 1_Q\mid P\oplus Q)\,.$$
It is easy to check (Schanuel's Lemma !) that the definition above does not depend on $Q$. Now, one can use the same strategy to define
the characteristic polynomial ${\rm det}_R(1-f\cdot X\mid P)\in R[X]$ of variable $X$. Indeed, $P\otimes_R R[X]$ is a finitely
generated, projective $R[X]$--module. One defines
$${\rm det}_R(1-f\cdot X\mid P):={\rm det}_{R[X]}(1-f\otimes X\mid P\otimes_R R[X])\,.$$
Obviously, for any $P$, $R$ and $f$ as above and any $R$--algebra $R'$, we have base-change equalities
\begin{equation}\label{base-change}
\begin{array}{c}
  {\rm det}_R(f\mid P)={\rm det}_{R'}(f\otimes\mathbf 1_{R'}\mid P\otimes_R R')\,, \\
{}\\
  {\rm det}_R(1-f\cdot X\mid P)={\rm det}_{R'}(1- (f\otimes\mathbf 1_{R'})\cdot X\mid P\otimes_R R')\,.
\end{array}
\end{equation}
\noindent Now, for any $R$ as above and any finitely presented $R$--module $M$, the first Fitting invariant (ideal) ${\rm Fit}_R(M)$ of $M$ over $R$ is defined as follows.
First, one considers a finite presentation of $M$
$$\xymatrix{
R^n\ar[r]^\phi &R^m\ar[r] &M\ar[r] &0}\,.$$ By definition, the
Fitting ideal ${\rm Fit}_R(M)$ is the ideal in $R$ generated by the
determinants of all the $m\times m$ minors of the matrix $A_\phi$
associated to $\phi$ with respect to two $R$--bases of $R^n$ and
$R^m$. It is well-known that the definition does not depend on the
chosen presentation or bases, and
$${\rm Ann}_R(M)^m\subseteq{\rm Fit}_R(M)\subseteq {\rm Ann}_R(M)\,.$$
For more on Fitting ideals, the reader can consult the first sections of \cite{Popescu-Stark} and the references therein.
\medskip

Next, we state and prove a proposition which is of independent interest and will play an important role in our Fitting ideal calculations. In what follows, if $R$ is a commutative topological ring and $\mathcal G$ is
a commutative profinite group, then the profinite group algebra
$$R[[\mathcal G]]:=\underset{\mathcal H}{\underset\longleftarrow\lim} R[\mathcal G/\mathcal H],$$
where $\mathcal G/\mathcal H$ are all the finite quotients of
$\mathcal G$ by closed subgroups $\mathcal H$, is viewed as a
topological $R$--algebra endowed with the usual projective limit
topology.
\medskip
\begin{proposition}\label{fitting-calculation}
Let $R$ be a semi-local compact topological ring and $\mathcal G$ a pro-cyclic group of topological generator $g$. Let $M$ be a topological $R[[\mathcal G]]$--module,
which is projective and finitely generated as an $R$--module. Let $F(u):={\rm det}_{R}(1-\mu_g\cdot u\mid M)$, where $\mu_g$ is the $R[[\mathcal G]]$--automorphism of $M$ given
by multiplication by $g$. Then, $M$ is finitely presented as an $R[[\mathcal G]]$--module and we have an equality of $R[[\mathcal G]]$-ideals
$${\rm Fit}_{R[[\mathcal G]]}(M)=(F(g^{-1}))\,.$$
\end{proposition}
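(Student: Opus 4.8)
The plan is to realize $M$ as the cokernel of an explicit endomorphism of a fixed finitely generated projective $R[[\mathcal G]]$--module, and then to read off both the finite presentation of $M$ and its Fitting ideal from the determinant of that endomorphism. Put $P:=R[[\mathcal G]]\otimes_R M$. Writing $M$ as a direct summand of some $R^N$ (possible since $M$ is $R$--projective) and tensoring up, $P$ is a direct summand of $R[[\mathcal G]]^N$, hence a finitely generated projective $R[[\mathcal G]]$--module which is compact for the profinite topology. Let $\nu:={\rm id}_{R[[\mathcal G]]}\otimes\mu_g\in{\rm Aut}_{R[[\mathcal G]]}(P)$; this is an automorphism because $\mu_g$ is an $R$--automorphism of $M$ (multiplication by the unit $g$). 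Finally set $\partial:=g\cdot{\rm id}_P-\nu\in{\rm End}_{R[[\mathcal G]]}(P)$, so that $\partial(\lambda\otimes x)=\lambda g\otimes x-\lambda\otimes gx$.

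The first and main step is to prove that
$$P\overset{\partial}{\longrightarrow} P\overset{{\rm mult}}{\longrightarrow} M\longrightarrow 0$$
is exact, where ${\rm mult}(\lambda\otimes x)=\lambda x$. Exactness at $M$ and the vanishing ${\rm mult}\circ\partial=0$ are immediate. For exactness in the middle one works modulo ${\rm im}(\partial)$: directly from the definition of $\partial$ one gets $\lambda g^{\pm1}\otimes x\equiv\lambda\otimes g^{\pm1}x$, hence $\lambda\sigma\otimes x\equiv\lambda\otimes\sigma x$ for every $\sigma$ in the (dense) subgroup generated by $g$, and then $\lambda\otimes x\equiv 1\otimes\lambda x$ first for $\lambda$ in the dense subring spanned over $R$ by the powers of $g$, and then for all $\lambda\in R[[\mathcal G]]$ because ${\rm im}(\partial)$ is closed (being the continuous image of the compact module $P$) and the maps $\lambda\mapsto\lambda\otimes x$, $\lambda\mapsto 1\otimes\lambda x$ are continuous. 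Additivity in the tensor then gives $\xi\equiv 1\otimes{\rm mult}(\xi)\pmod{{\rm im}\,\partial}$ for every $\xi\in P$, whence $\ker({\rm mult})\subseteq{\rm im}(\partial)$. This is the only place where the compactness of $R$ (and hence of $M$) is genuinely used, and it is the step I expect to demand the most care; the semi--locality hypothesis does not seem to be needed for this route.

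The rest is formal. Choose $Q$ with $P\oplus Q=R[[\mathcal G]]^k$ free; then $\partial\oplus{\rm id}_Q$ is an endomorphism of $R[[\mathcal G]]^k$ with cokernel $M$ and, by the exactness just proved, with kernel ${\rm im}(\partial)\oplus Q={\rm im}(\partial\oplus{\rm id}_Q)$. Hence $M$ is finitely presented over $R[[\mathcal G]]$, with a square presentation matrix, and consequently ${\rm Fit}_{R[[\mathcal G]]}(M)$ is the principal ideal generated by $\det_{R[[\mathcal G]]}(\partial\oplus{\rm id}_Q)$, which equals $\det_{R[[\mathcal G]]}(\partial\mid P)$ by the very definition of the determinant on a projective module recalled above.

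It remains to identify $\det_{R[[\mathcal G]]}(\partial\mid P)$ with $F(g^{-1})$ up to a unit of $R[[\mathcal G]]$. Applying the base--change formula (\ref{base-change}) to the morphism $R[u]\to R[[\mathcal G]]$, $u\mapsto g^{-1}$, one finds $F(g^{-1})=\det_{R[[\mathcal G]]}({\rm id}_P-g^{-1}\nu\mid P)$. Since ${\rm id}_P-g^{-1}\nu=g^{-1}(g\cdot{\rm id}_P-\nu)=g^{-1}\partial$, multiplicativity of the determinant gives $F(g^{-1})=\det_{R[[\mathcal G]]}(g^{-1}{\rm id}_P\mid P)\cdot\det_{R[[\mathcal G]]}(\partial\mid P)$, and $\det_{R[[\mathcal G]]}(g^{-1}{\rm id}_P\mid P)$ is a unit of $R[[\mathcal G]]$ because $g^{-1}$ is. Therefore $\big(F(g^{-1})\big)=\big(\det_{R[[\mathcal G]]}(\partial\mid P)\big)={\rm Fit}_{R[[\mathcal G]]}(M)$, which is the asserted equality.
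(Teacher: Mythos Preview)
Your proof is correct and follows essentially the same strategy as the paper's: build an explicit two--term presentation of $M$ over $R[[\mathcal G]]$ out of the $R$--linear action of $g$, establish exactness in the middle by a density--plus--compactness argument, and then read off the Fitting ideal as the determinant of the presenting endomorphism. The paper first invokes the semi--locality hypothesis to reduce to the case where $R$ is local and $M$ is $R$--free, then chooses a basis and works with the matrix $1-g^{-1}A_g$ on $R[[\mathcal G]]^n$; you instead work coordinate--free with the projective module $P=R[[\mathcal G]]\otimes_R M$ and the endomorphism $\partial=g\cdot{\rm id}_P-\nu$, absorbing the extra factor of $g$ into a unit at the end. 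The compactness step is literally the same in both arguments (the image of a continuous map from a compact module is closed, so congruences established on the dense subring $R[g,g^{-1}]$ extend to all of $R[[\mathcal G]]$). Your formulation is marginally cleaner and, as you correctly observe, does not use the semi--locality of $R$ at all.
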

\begin{proof} Since $R$ is semi-local and Fitting ideals and characteristic polynomials are well behaved with respect to base-change,
we may work on each local component separately. Consequently, we may assume that $R$ is local and $M$ is $R$--free of finite
rank. Let $\{x_1, \dots, x_n\}$ be an $R$--basis for $M$ and let $A_g\in{\rm GL}_n(R)$ be the matrix associated to $\mu_g$ in this basis. We denote by $\phi_g$ the $R[[\mathcal G]]$--linear
endomorphism of $R[[\mathcal G]]^n$ whose matrix in the standard $R[[\mathcal G]]$--basis $\{e_1, \dots, e_n\}$ is $(1-g^{-1}\cdot A_g)\in M_n(R[[\mathcal G]])$. We claim that we have
an exact sequence of topological $R[[\mathcal G]]$--modules
\begin{equation}\label{presentation}\xymatrix{
R[[\mathcal G]]^n\ar[r]^{\phi_g} & R[[\mathcal G]]^n\ar[r]^{\quad \pi} &M\ar[r] &0,
}\end{equation}
where $\pi$ is the $R[[\mathcal G]]$-linear morphism satisfying $\pi(e_i)=x_i$, for all $i=1, \dots, n$. By definition, $\pi$ is surjective and $\pi\circ\phi_g=0$. So, the only thing that needs
to be checked is the inclusion $\ker\pi\subseteq {\rm Im }\,\phi_g$. In order to prove this, let $\iota: R^n\longrightarrow R[[\mathcal G]]^n$ be the canonical ($R$--linear) inclusion. Note that if $\{\overline e_1,\dots, \overline e_n\}$ is the standard $R$--basis of $R^n$, then $\iota(\overline e_i)=e_i$, for all $i$. Since $M$ is
$R$--free of basis $\{x_1, \dots, x_n\}$, the compositum $\pi\circ\iota$ is an isomorphism of $R$--modules. Consequently, we have ${\rm Im}\,\iota\,\cap \ker\pi =0$. Further, we claim
that we have an equality
\begin{equation}\label{equality}{\rm Im}\,\iota+ {\rm Im}\,\phi_g =R[[\mathcal G]]^n.\end{equation}
In order to prove this last claim, for all $m\in\mathbb N$ we let $\phi_{g, m}$ be the $R[[\mathcal G]]$--module endomorphism of $R[[\mathcal G]]^n$ whose matrix in the basis
$(e_i)_i$ is $(g^m\cdot I_n - A_g^m)\in  M_n(R[[\mathcal G]])$. It is easily seen that for all $m$ we have $\phi_{g, m}=\phi_g\circ\alpha_{g, m}$, for an easily computable
$\alpha_{g, m}\in{\rm End}_{R[[\mathcal G]]}(R[[\mathcal G]]^n)$. This shows that ${\rm Im}\, \phi_{g, m}\subseteq {\rm Im}\, \phi_g$, for all $m$. Since, $A_g$ has entries in $R$, this implies further that
$g^m\cdot e_i\in{\rm Im}\, \phi_g +{\rm Im}\, \iota$, for all $m$ and all $i$. Consequently, we have an inclusion
$$\bigoplus\limits_{i=1}^n R[\mathcal G]\cdot e_i\subseteq {\rm Im}\, \phi_g + {\rm Im}\, \iota\,.$$
Now, we take the topological closure in $R[[\mathcal G]]^n$ of both sides of the above inclusion. Since $R^n$ and $R[[\mathcal G]]^n$ are compact and $\iota$ and $\phi_g$ are continuous, the right--hand side is already closed.
Moreover, in the profinite topology, $R[\mathcal G]$ is dense in $R[[\mathcal G]]$. Consequently, we have
$$R[[\mathcal G]]^n=\bigoplus\limits_{i=1}^n R[[\mathcal G]]\cdot e_i\subseteq {\rm Im}\, \phi_g + {\rm Im}\, \iota\,,$$
which concludes the proof of our last claim (\ref{equality}).
Now, the desired inclusion $\ker\pi\subseteq {\rm Im }\,\phi_g$ is easily obtained by combining
$${\rm Im}\,\iota + {\rm Im}\,\phi_g =R[[\mathcal G]]^n,\quad {\rm Im}\,\phi_g\subseteq \ker\pi,\text{ and } {\rm Im}\,\iota\,\cap\ker \pi=0.$$
This concludes the proof of the fact that (\ref{presentation}) is indeed an exact sequence of $R[[\mathcal G]]$--modules. On one hand, this implies that
$M$ is indeed $R[[\mathcal G]]$--finitely presented. On the other hand, from the definition of the Fitting ideal, we have an equality of $R[[\mathcal G]]$--ideals
$${\rm Fit}_{R[[\mathcal G]]}(M)=({\rm det}(1-g^{-1}\cdot A_g))=(F(g^{-1}))\,.$$
This concludes the proof of our Proposition.
\end{proof}
\begin{corollary}\label{fitting-duals} Let $R$, $\mathcal G$ and $M$ be as in Proposition
\ref{fitting-calculation}. The following hold.
\begin{enumerate}\item
Let $M^\ast_R:={\rm Hom}_R(M, R)$. We view $M^\ast_R$ as a
topological $R[[\mathcal G]]$--module with the co-variant
$\mathcal G$--action, given by $\sigma\cdot
f(m):=f(\sigma\cdot m)$, for all $f\in M^\ast_R$,
$\sigma\in\mathcal G$ and $m\in M$. Then, we have
$${\rm Fit}_{R[[\mathcal G]]}(M)={\rm Fit}_{R[[\mathcal G]]}(M^\ast_R)\,.$$
\item Further, assume  $R=\mathbb Z_{\ell}[G]$, where $G$ is
    a finite, abelian group. Let $M^\ast:={\rm Hom}_{\mathbb
    Z_\ell}(M, \mathbb Z_\ell)$, viewed as an $R[[\mathcal
    G]]$--module with the co-variant $G\times\mathcal
    G$--action. Then
    $${\rm Fit}_{R[[\mathcal G]]}(M^\ast)={\rm Fit}_{R[[\mathcal
    G]]}(M).$$
\end{enumerate}
\end{corollary}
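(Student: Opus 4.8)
The plan is to deduce both parts from Proposition \ref{fitting-calculation}, applied with the appropriate dual module in place of $M$; the only new input required is that dualizing does not change the characteristic polynomial that controls the Fitting ideal.

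For part (1), I would first observe that, $R$ being commutative, the $R$--dual $M^\ast_R={\rm Hom}_R(M,R)$ of the finitely generated projective $R$--module $M$ is again finitely generated and projective over $R$ (if $M\oplus Q\cong R^n$ then $M^\ast_R\oplus Q^\ast_R\cong(R^n)^\ast_R\cong R^n$), and it carries a topology for which it is a compact topological $R[[\mathcal G]]$--module on which $\mathcal G$ acts continuously; thus Proposition \ref{fitting-calculation} applies to $M^\ast_R$. By the definition of the covariant $\mathcal G$--action, multiplication by $g$ on $M^\ast_R$ is the transpose $\mu_g^\vee$ of $\mu_g$, which is $R$--linear since $\mu_g$ is and $R$ is commutative. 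Choosing a complement $Q$ with $M\oplus Q$ free, one has $(\mu_g\oplus\mathbf{1}_Q)^\vee=\mu_g^\vee\oplus\mathbf{1}_{Q^\ast_R}$, and on a free module passing to the transpose of a matrix $A$ leaves $\det(1-Au)$ unchanged; hence $\det_R(1-\mu_g^\vee\cdot u\mid M^\ast_R)=\det_R(1-\mu_g\cdot u\mid M)=F(u)$. Proposition \ref{fitting-calculation} then yields ${\rm Fit}_{R[[\mathcal G]]}(M^\ast_R)=(F(g^{-1}))={\rm Fit}_{R[[\mathcal G]]}(M)$.

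For part (2), take $R=\mathbb Z_\ell[G]$ with $G$ finite abelian. The decisive point is that $R$ is a symmetric $\mathbb Z_\ell$--algebra: the $\mathbb Z_\ell$--linear functional $\epsilon\colon R\to\mathbb Z_\ell$ returning the coefficient of $1_G$ gives a perfect symmetric pairing $\langle a,b\rangle:=\epsilon(ab)$ on $R$, hence an $R$--module isomorphism $R\overset{\sim}{\to}{\rm Hom}_{\mathbb Z_\ell}(R,\mathbb Z_\ell)$. Combined with the change--of--rings identification ${\rm Hom}_R\bigl(M,{\rm Hom}_{\mathbb Z_\ell}(R,\mathbb Z_\ell)\bigr)\cong{\rm Hom}_{\mathbb Z_\ell}(M,\mathbb Z_\ell)$, this produces a natural isomorphism $\Phi\colon M^\ast_R\overset{\sim}{\to}M^\ast$ given by $\Phi(\varphi)=\epsilon\circ\varphi$, with inverse $\psi\mapsto\bigl(m\mapsto\sum_{\sigma\in G}\psi(\sigma^{-1}m)\,\sigma\bigr)$. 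I would then check, using $R$--linearity of $\varphi$ and commutativity of $R$, that $\Phi$ carries the $R$--module $G$--action and the covariant $\mathcal G$--action on $M^\ast_R$ (the structure used in part (1)) to the covariant $G\times\mathcal G$--action on $M^\ast$; the covariant normalization is exactly what makes both equivariance checks go through without any inversion of group elements appearing. Hence $M^\ast\cong M^\ast_R$ as topological $R[[\mathcal G]]$--modules, and part (2) follows from part (1).

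The main obstacle I anticipate is the equivariance verification in part (2): one must keep straight that the ``$G$''--component of the module structure on $M^\ast_R$ comes from its $R$--module structure whereas on $M^\ast$ it is declared covariant, and that both are compatible with the transpose description of multiplication by $g$ used in part (1). Everything else is a routine reduction to Proposition \ref{fitting-calculation}, together with the elementary fact that a square matrix and its transpose have the same characteristic polynomial.
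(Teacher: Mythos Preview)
Your proposal is correct and follows essentially the same approach as the paper: both parts reduce to Proposition~\ref{fitting-calculation} via the observation that the transpose of $\mu_g$ has the same characteristic polynomial, and part~(2) is deduced from part~(1) via the explicit isomorphism $M^\ast\overset\sim\to M^\ast_R$ sending $\psi$ to $m\mapsto\sum_{\sigma\in G}\psi(\sigma^{-1}m)\,\sigma$, which is exactly the map the paper writes down. The only cosmetic differences are that the paper first localizes so that $M$ is free (rather than carrying along a complement $Q$), and that you supply the conceptual reason---the symmetric $\mathbb Z_\ell$--algebra structure on $\mathbb Z_\ell[G]$---behind the isomorphism in part~(2), whereas the paper simply states the formula.
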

\begin{proof} (1) As in the proof of Proposition \ref{fitting-calculation}, we may assume that $M$ is a free $R$--module of basis $\overline x:=\{x_1, \dots, x_n\}$. Then $M^\ast_R$ is
$R$--free of basis $\overline x^\ast:=\{x_1^\ast, \dots,
x_n^\ast\}$, uniquely characterized by $x_i^\ast(x_j)=\delta_{i,
j}$, for all $i, j=1, \dots, n$. If one denotes by $\mu_g^\ast$
the multiplication by $g$ map on $M^\ast_R$ and by $A_g^\ast$ its
matrix with respect to the basis $\overline x^\ast$, it is easy to
see that, under the co-variant $\mathcal G$--action, we have
$A_g^\ast=A_g^t$, where $A_g^t$ is the transposed of $A_g$.
Consequently, we have
$$F^\ast(u):={\rm det}_{R}(1-\mu_g^\ast\cdot u\mid M^\ast_R)={\rm det}_{R}(1-\mu_g\cdot u\mid M)=F(u)\,.$$
Now, part (1) of the corollary follows by applying Proposition
\ref{fitting-calculation} to $M$ and $M^\ast_R$.

Part (2) follows immediately from (1) and the isomorphism of
$R[[\mathcal G]]$--modules $M^\ast\overset\sim\longrightarrow
M_R^\ast$ sending $\phi\in M^\ast$ to $\psi\in M_R^\ast$, with
$\psi(x)=\sum_{\sigma\in G}\phi(x^{\sigma^{-1}})\cdot\sigma$, for
all $x\in M$.
\end{proof}
\medskip

\subsection{\bf The statement of the problem.}
Now, we are ready to state precisely the number theoretic task
described in the introduction to this section. For that purpose, we
make a slight change in notations, in order to be in tune with the
more number theoretically minded reader. We let $K/k$ denote an
abelian $G$--Galois extension of characteristic $p$ global fields.
Assume that $\mathbb F_q$ is the exact field of constants in $k$ (but
not necessarily in $K$.) Let $X\mapsto Y$ be the corresponding
$G$-Galois cover of smooth projective curves defined over $\mathbb
F_q$. Let $S$ and $\Sigma$ be two finite, nonempty, disjoint sets of
closed points on $Y$, such that $S$ contains the set $S_{\text ram}$
of points which ramify in $X$. We let $\mathbb F$ denote the algebraic
closure of $\mathbb F_q$, $\overline X:=X\times_{\mathbb F_q}\mathbb F$,
$\overline X:=X\times_{\mathbb F_q}\mathbb F$. Also, $\overline S$ and
$\overline \Sigma$ denote the sets of closed points on $\overline X$
sitting above points in $S$ and $T$, respectively.  As in the last
section, we let $H:=\text{Gal}(K/K\cap k_\infty)$, where $k_\infty:=k\mathbb F$. For every closed
point $v$ on $Y$ which does not belong to $S_{\rm ram}$, we denote
by $G_v$ and $\sigma_v$ the decomposition group and Frobenius
automorphism associated to $v$ inside $G$, respectively. Also, for a
closed point $v$ of $Y$, we let $d_v$ denote its residual degree
over $\mathbb F_q$ (i.e. the degree over $\mathbb F_q$ of the residue
field associated to $v$) and we let $Nv:=q^{d_v}=|\mathbb F_{q^{d_v}}|$
(i.e. the cardinality of the residue field associated to $v$.)
\medskip

To the set of data $(K/k, \mathbb F_q, S, T)$, one can associate a
(polynomial) equivariant $L$--function
\begin{equation}\label{product-formula}
\Theta_{S, \Sigma}(u):=\prod\limits_{v\in \Sigma}(1-\sigma_v^{-1}\cdot(qu)^{d_v})\cdot\prod\limits_{v\not\in S}(1-\sigma_v^{-1}\cdot u^{d_v})^{-1}\,,
\end{equation}
where the infinite product on the right is taken over all closed
points in $Y$ which are not in $S$. The product on the right-hand
side is convergent in $\mathbb Z[G][[u]]$ and in fact it converges to
an element $\Theta_{S,\Sigma}(u)\in\mathbb Z[G][u]$ (see
\cite{Tate-Stark}, Chpt.~V.) The link between $\Theta_{S,\Sigma}(u)$
and the classical Artin $L$--functions associated to the characters
(irreducible representations) of the Galois group $G$ is as follows.
For every complex valued (irreducible) character $\chi$ of $G$, we
let $L_{S, \Sigma}(\chi, s)$ denote the $(S,\Sigma)$--modified Artin
$L$--function associated to $\chi$. This is the (unique) holomorphic
function of complex variable $s$, satisfying the equality
$$L_{S, \Sigma}(\chi, s)=\prod\limits_{v\in \Sigma}(1-\chi(\sigma_v)\cdot Nv^{1-s})\cdot\prod\limits_{v\not\in S}(1-\chi(\sigma_v)\cdot Nv^{-s})^{-1}\,,\quad \text{ for all }s\in\mathbb C, \text{ with }\Re(s)>1\,.$$
It is not difficult to show that we have an equality
$$\Theta_{S, \Sigma}(q^{-s})=\sum_{\chi}L_{S, \Sigma}(\chi, s)\cdot e_{\chi^{-1}}\,,\text{ for all }s\in\mathbb C\,,$$
where the sum is taken with respect to all the irreducible complex valued characters $\chi$ of $G$ and $e_\chi:=1/|G|\sum_{\sigma\in G}\chi(\sigma)\sigma^{-1}$ denotes
the idempotent corresponding to $\chi$ in $\mathbb C[G]$.
\medskip

We denote by $M_{\overline S,\overline \Sigma}$ the Picard
$1$-motive associated to the set of data $(\overline X, \mathbb F,
\overline S, \overline \Sigma)$. For a prime number $\ell$ we
consider the $\ell$--adic Tate module ($\ell$--adic realization)
$T_\ell(M_{\overline S, \overline\Sigma})$ of $M_{\overline S,
\overline \Sigma}$, endowed with the usual $\mathbb Z_\ell[[\overline
G]]$--module structure, where $\overline G:=G\times\Gamma$ and
$\Gamma:=\text{Gal}(\mathbb F/\mathbb F_q)$ (see Remark \ref{G-equiv}.) We
denote by $\gamma$ the $q$--power arithmetic Frobenius, which is the
distinguished topological generator of $\Gamma$ characterized by
$\gamma(\zeta)=\zeta^q$, for all $\zeta\in\mathbb F$. Theorem
\ref{main-ct} restricted to this context assures us that
$T_\ell(M_{\overline S, \overline\Sigma})$ is $\mathbb
Z_\ell[G]$--projective of finite rank. Proposition
\ref{fitting-calculation} combined with Remark \ref{G-equiv} implies
that $T_\ell(M_{\overline S, \overline\Sigma})$ is finitely
presented as a (topological) $\mathbb Z_\ell[[\overline G]]$--module,
for all prime numbers $\ell$. The main goal of this section is to
prove part (2) of the following.
\begin{theorem}\label{Fitting}
Under the above hypotheses, the following hold for all prime numbers $\ell$.
\begin{enumerate}
\item The $\mathbb Z_\ell[G]$--module $T_\ell(M_{\overline S, \overline\Sigma})$ is projective.
\item We have an equality of $\mathbb Z_\ell[[\overline G]]$--ideals
$$\left(\Theta_{S, \Sigma}(\gamma^{-1})\right)=\text{\rm Fit}_{\mathbb Z_\ell[[\overline G]]}(T_\ell(M_{\overline S, \overline\Sigma})).$$
\end{enumerate}
\end{theorem}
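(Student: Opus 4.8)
The plan is to reduce the Fitting ideal computation to a determinant calculation by means of Proposition \ref{fitting-calculation}, and then to identify the resulting $G$-equivariant characteristic polynomial with $\Theta_{S,\Sigma}(u)$, using the results of Deligne and Tate when $\ell\ne p$ and those of Berthelot and Bloch--Illusie when $\ell=p$. Part (1) is immediate --- it is exactly part (1) of Theorem \ref{main-ct} in the present situation. For part (2), I would apply Proposition \ref{fitting-calculation} with $R:=\mathbb Z_\ell[G]$ (a compact, semi-local ring), $\mathcal G:=\Gamma=\mathrm{Gal}(\mathbb F/\mathbb F_q)$, topological generator $g:=\gamma$, and $M:=T_\ell(M_{\overline S,\overline\Sigma})$; this is legitimate because $T_\ell(M_{\overline S,\overline\Sigma})$ is finitely generated and projective over $\mathbb Z_\ell[G]$ by Theorem \ref{main-ct}(1) --- this is where cohomological triviality enters --- and carries a continuous $\Gamma$-action by Remark \ref{G-equiv}. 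One obtains
$$\mathrm{Fit}_{\mathbb Z_\ell[[\overline G]]}\bigl(T_\ell(M_{\overline S,\overline\Sigma})\bigr)=\bigl(F_\ell(\gamma^{-1})\bigr),\qquad F_\ell(u):=\mathrm{det}_{\mathbb Z_\ell[G]}\bigl(1-\mu_\gamma u\ \big|\ T_\ell(M_{\overline S,\overline\Sigma})\bigr)\in\mathbb Z_\ell[G][u],$$
where $\mu_\gamma$ denotes multiplication by $\gamma$. Since $\Theta_{S,\Sigma}(u)\in\mathbb Z[G][u]$, it remains only to prove that $F_\ell(\gamma^{-1})$ and the image of $\Theta_{S,\Sigma}(\gamma^{-1})$ generate the same ideal of $\mathbb Z_\ell[[\overline G]]$, for every prime $\ell$.

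For $\ell\ne p$ this is a reformulation of Deligne's theorem as recorded in \cite{Tate-Stark}, Chpt.~V: the $G$-equivariant polynomial $\mathrm{det}_{\mathbb Q_\ell[G]}(1-\varphi u\mid\mathbb Q_\ell\otimes T_\ell(M_{\overline S,\overline\Sigma}))$, with $\varphi$ the $q$-power geometric Frobenius morphism, equals $\Theta_{S,\Sigma}(u)$. On the Tate module the geometric Frobenius morphism acts precisely as the arithmetic Galois Frobenius $\gamma$ --- the $q$-power isogeny and $\gamma$ induce the same map on torsion points --- so $\varphi=\mu_\gamma$ on $T_\ell(M_{\overline S,\overline\Sigma})$; together with the integrality of $\Theta_{S,\Sigma}(u)$ this forces $F_\ell(u)=\Theta_{S,\Sigma}(u)$ in $\mathbb Z_\ell[G][u]$, whence $(F_\ell(\gamma^{-1}))=(\Theta_{S,\Sigma}(\gamma^{-1}))$.

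The case $\ell=p$ is the heart of the matter: since $T_p(M_{\overline S,\overline\Sigma})=T_p(M_{\overline S,\emptyset})$ by Remark \ref{p-torsion}, the $p$-adic \'etale realization a priori only captures the unit root part of the $L$-polynomial. Here I would invoke Berthelot's theorem (see \S4 and the Appendix of \cite{Popescu-Stark}), which expresses $\Theta_{S,\Sigma}(u)$ as the $G$-equivariant determinant $\mathrm{det}_{\mathbb Z_p[G]}(1-\Phi u\mid N)$ for the $q$-power crystalline Frobenius $\Phi$ on the crystalline realization $N$ of $M_{\overline S,\overline\Sigma}$, together with the comparison of Bloch and Illusie, which identifies the slope-zero (unit root) graded piece $\mathrm{gr}^0 N$ of the slope filtration of $(N,\Phi)$ with $T_p(M_{\overline S,\overline\Sigma})$, compatibly with the $G$-action and with $\Phi$ versus $\mu_\gamma$. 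Multiplicativity of $\mathrm{det}(1-\Phi u\mid\,\cdot\,)$ along the slope filtration then gives $\Theta_{S,\Sigma}(u)=F_p(u)\cdot G_p(u)$ with $G_p(u)\in 1+u\,\mathbb Z_p[G][u]$. Reducing modulo $p$ and using that $\Phi$ is nilpotent on every positive-slope graded piece --- its eigenvalues have positive $p$-adic valuation, as $p\mid q$ --- whereas $\Phi$ acts invertibly on $\mathrm{gr}^0 N$, one finds $\overline{G_p}(u)\in 1+\mathrm{rad}(\mathbb F_p[G])[u]\cdot u$, hence $G_p(u)\in 1+\mathrm{rad}(\mathbb Z_p[G])[u]\cdot u$ and $G_p(\gamma^{-1})\in 1+\mathrm{rad}(\mathbb Z_p[[\overline G]])$, a unit. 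Therefore $(\Theta_{S,\Sigma}(\gamma^{-1}))=(F_p(\gamma^{-1}))=\mathrm{Fit}_{\mathbb Z_p[[\overline G]]}(T_p(M_{\overline S,\overline\Sigma}))$.

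The main obstacle is this $\ell=p$ step: one must carefully reconcile the integral structures and Frobenius operators in the $p$-adic \'etale, crystalline and de Rham--Witt pictures --- precisely where the combined results of Deligne, Tate, Berthelot and Bloch--Illusie, and the cohomological triviality of $T_p(M_{\overline S,\overline\Sigma})$ from Theorem \ref{main-ct}, must all be brought to bear --- and then isolate the unit root part cleanly; the reduction via Proposition \ref{fitting-calculation} and the case $\ell\ne p$ are comparatively formal. One subtle but ultimately harmless point is the arithmetic-versus-geometric Frobenius bookkeeping: contrary to what one might at first expect, no reciprocal-polynomial discrepancy arises, because on Tate modules the geometric Frobenius morphism coincides with the arithmetic Galois Frobenius.
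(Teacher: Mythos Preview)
Your approach is essentially the paper's: part (1) is Theorem \ref{main-ct}(1); for (2) both you and the paper apply Proposition \ref{fitting-calculation} with $R=\mathbb Z_\ell[G]$, $\mathcal G=\Gamma$, $g=\gamma$, invoke Deligne's theorem for $\ell\ne p$, and for $\ell=p$ factor $\Theta_{S,\Sigma}(u)$ as (unit-root determinant)$\times$(positive-slope remainder) via Berthelot and Bloch--Illusie, then show the remainder becomes a unit at $u=\gamma^{-1}$.

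Two refinements the paper supplies that your sketch elides. First, the paper does not invoke an abstract ``crystalline realization of the $1$-motive'' over $\mathbb Z_p[G]$; instead it uses the concrete formula (Theorem \ref{theta-poly}) in terms of $H^1_{\rm cris}(\overline X/W)\otimes_W\mathbb C_p$, ${\rm Div}^0(\overline S)$, and the $\Sigma$-Euler factors, and the slope decomposition lives over $\mathbb C_p[G]$, not $\mathbb Z_p[G]$. Consequently the integrality $Q(u)\in\mathbb Z_p[G][u]$ of the positive-slope factor is \emph{not} immediate from multiplicativity along the filtration; the paper deduces it (Proposition \ref{poly-integrality}(3)) from $P(u)\in\mathbb Z_p[G][u]$, $P(0)=1$, and $P\cdot Q=\Theta_{S,\Sigma}\in\mathbb Z[G][u]$ via an elementary division lemma. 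Second, the unit claim $Q(\gamma^{-1})\in\mathbb Z_p[[\overline G]]^\times$ (Proposition \ref{Q-invertible}) is established character by character: for each $\psi=(\chi,\rho)$ one checks $Q^\chi(u)\in 1+u\,\mathfrak m_O[u]$ over a suitable local ring $O$ (since both $q$ and the positive-slope eigenvalues lie in $\mathfrak m_O$), whence $\psi(Q(\gamma^{-1}))\in O_\psi^\times$. This is the rigorous form of your ``reduce mod $p$ / Jacobson radical'' heuristic; as stated, your argument presupposes an integral model for the slope pieces in order to reduce mod $p$, which is precisely what the first refinement addresses.
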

\noindent Of course, only (2) requires a proof. This task will be accomplished in two steps, the first
(and the easier) step dealing with primes $\ell\ne p$, and the
second dealing with the characteristic prime $p$.
\medskip

\subsection{\bf The case $\ell\ne p$.} We work under the assumptions and with the notations introduced in the previous subsection. The following theorem provides the link between the $\ell$--adic
realization $T_\ell(M_{\overline S, \overline\Sigma})$ of the Picard $1$--motive $M_{\overline S, \overline \Sigma}$ and the $G$--equivariant $L$--function $\Theta_{S,\Sigma}(u)$.

\begin{theorem}\label{Deligne}{\bf{\rm (Deligne)}} For all primes $\ell\ne p$, the following equality holds in $\mathbb Q_\ell[G]$.
$$\Theta_{S, \Sigma}(u)={\rm det}_{\mathbb Q_\ell[G]}(1-\gamma\cdot u\mid T_{\ell}(M_{\overline S, \overline\Sigma})\otimes_{\mathbb Z_\ell}\mathbb Q_\ell)\,.$$
\end{theorem}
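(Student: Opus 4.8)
The plan is to reduce Theorem~\ref{Deligne} to the cohomological description of $T_\ell(M_{\overline S,\overline\Sigma})$ via the exact sequence
$$0\longrightarrow T_\ell(J_{\overline\Sigma})\longrightarrow T_\ell(M_{\overline S,\overline\Sigma})\longrightarrow {\rm Div}^0(\overline S)\otimes\mathbb Z_\ell\longrightarrow 0$$
and to recognize each of the three resulting factors as an \'etale cohomology group of $\overline X$ with its natural Frobenius action. First I would fix $\ell\ne p$ and recall that the determinant ${\rm det}_{\mathbb Q_\ell[G]}(1-\gamma u\mid -)$ is multiplicative in short exact sequences of $\mathbb Q_\ell[G]$-representations, so it suffices to compute it on $T_\ell(J_{\overline\Sigma})\otimes\mathbb Q_\ell$ and on ${\rm Div}^0(\overline S)\otimes\mathbb Q_\ell$ separately, and to match the product with the factorization of $\Theta_{S,\Sigma}(u)$. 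For this, use the further extension $0\to T_\ell(\tau_{\overline\Sigma})\to T_\ell(J_{\overline\Sigma})\to T_\ell(J)\to 0$, so that altogether we must identify the characteristic polynomial of arithmetic Frobenius on $T_\ell(\tau_{\overline\Sigma})\otimes\mathbb Q_\ell$, on $T_\ell(J)\otimes\mathbb Q_\ell$, and on ${\rm Div}^0(\overline S)\otimes\mathbb Q_\ell$.

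Next I would translate each piece into \'etale cohomology over $\mathbb F_q$. The classical identifications give $T_\ell(J)\otimes\mathbb Q_\ell\cong H^1_{\text{\'et}}(\overline X,\mathbb Q_\ell)(1)$ as $\mathbb Q_\ell[G][\Gamma]$-modules; the torus part $T_\ell(\tau_{\overline\Sigma})\otimes\mathbb Q_\ell$ is, by its description as $\bigoplus_{v\in\overline\Sigma}\mathbb Q_\ell(1)$ modulo the diagonal on each component, the Tate twist of a permutation module on the closed points of $X$ above $\Sigma$ minus a trivial line, and ${\rm Div}^0(\overline S)\otimes\mathbb Q_\ell$ is the permutation module on the closed points above $S$ minus a trivial line; these last two are the $H^0$ and $H^2$-type contributions. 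On each such induced/permutation representation, the arithmetic Frobenius $\gamma$ acts by permuting the points of $\overline X$ in each fiber, and its characteristic polynomial over $\mathbb Q_\ell[G]$ is a product of local factors: for $v$ with decomposition group $G_v$ and Frobenius $\sigma_v$ one gets exactly $(1-\sigma_v^{-1}\,t^{d_v})$ with $t=qu$ for the torus part (because of the Tate twist, which multiplies the eigenvalue of $\gamma$ by $q$) and with $t=u$ for the divisor part; the trivial lines subtracted off account for the ``multidegree $0$'' normalization. Assembling, the contribution of the divisor and torus parts reproduces the finite products $\prod_{v\in\Sigma}(1-\sigma_v^{-1}(qu)^{d_v})$ and cancels the spurious trivial factors against the $S$-part, while Deligne's identification of $H^1_{\text{\'et}}$ with the ``motivic'' Euler product reproduces $\prod_{v\notin S}(1-\sigma_v^{-1}u^{d_v})^{-1}$, the Weil-conjecture shape of the zeta function of $\overline X/\mathbb F_q$ twisted by $G$-characters. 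The bookkeeping is cleanest if one first checks the scalar ($G$ trivial) case, which is precisely the statement that $Z(\overline X/\mathbb F_q,u)$ has numerator ${\rm det}(1-\gamma u\mid H^1)$, and then runs the same Grothendieck--Lefschetz trace computation $\chi$-component by $\chi$-component over $\overline{\mathbb Q}_\ell$, using that $L_{S,\Sigma}(\chi,s)$ is, by definition, the $\chi$-part of this equivariant Euler product with the modification factors at $S$ and $\Sigma$.

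The main obstacle is not the Lefschetz trace formula itself but the precise matching of the three ``boundary'' contributions (the torus factor, the $S$-divisor factor, and the two ``$-1$'' normalizations coming from multidegree zero) with the explicit shape of $\Theta_{S,\Sigma}(u)$ in \eqref{product-formula}; one must be careful that the Tate twist on $T_\ell(\tau_{\overline\Sigma})$ is responsible for the substitution $u\mapsto qu$ in the $\Sigma$-factor, that the cover is unramified along $\overline\Sigma$ so that the fibers are honest $G$-torsors and the induced-module description is exact, and that the $\kappa$ algebraically closed hypothesis kills inertia so the local Euler factors have the stated form with $d_v$ the residual degree over $\mathbb F_q$. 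Rather than redo all of this, I would simply invoke Deligne's computation as recorded in Chapter~V of \cite{Tate-Stark}, where exactly this identification of $\Theta_{S,\Sigma}(u)$ with the $G$-equivariant characteristic polynomial of geometric Frobenius on $T_\ell(M_{\overline S,\overline\Sigma})\otimes\mathbb Q_\ell$ is carried out; passing between geometric and arithmetic Frobenius (the former acting as the inverse of the latter on the \'etale homology $T_\ell$) accounts for the fact that our formula is stated with $\gamma$ (arithmetic Frobenius) acting on the Tate module, and a final check that the two conventions agree completes the proof.
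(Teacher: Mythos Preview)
Your proposal is correct and ultimately coincides with the paper's proof: the paper simply writes ``See Chpt.~V of \cite{Tate-Stark}'' and gives no further argument, and you end in exactly the same place by invoking Deligne's computation as recorded there. The extended sketch you give beforehand---decomposing $T_\ell(M_{\overline S,\overline\Sigma})$ into the torus, Jacobian, and divisor pieces and matching each against a factor of the Euler product---is a reasonable outline of what actually happens in that reference, but it is extra material the paper does not supply; if you retain it, tighten the bookkeeping around the $H^0$/$H^2$ cancellations, since as written the claim that the $H^1$ contribution alone reproduces $\prod_{v\notin S}(1-\sigma_v^{-1}u^{d_v})^{-1}$ elides the role of those boundary terms.
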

\begin{proof} See Chpt.~V of \cite{Tate-Stark}.
\end{proof}
Now, we are ready to state and prove the main result of this subsection.
\begin{theorem}\label{Fitting-ell}
Under the above hypotheses, the following hold for all primes $\ell\ne p$.
\begin{enumerate}
\item The $\mathbb Z_\ell[G]$--module $T_\ell(M_{\overline S, \overline\Sigma})$ is projective.
\item
We have the following equality of ideals in $\mathbb Z_\ell[[\overline G]]$.
$$\left(\Theta_{S, \Sigma}(\gamma^{-1})\right)=\text{\rm Fit}_{\mathbb Z_\ell[[\overline G]]}(T_\ell(M_{\overline S, \overline\Sigma}))\,.$$
\end{enumerate}
\end{theorem}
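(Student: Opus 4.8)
\textbf{Proof proposal for Theorem \ref{Fitting-ell}.}
The plan is to get part (1) for free from Theorem \ref{main-ct}, and to deduce part (2) by feeding that projectivity into Proposition \ref{fitting-calculation} and then identifying the resulting characteristic polynomial with $\Theta_{S,\Sigma}(u)$ via Deligne's Theorem \ref{Deligne}. For (1), observe that the data $(X, Y, \mathbb F_q, S, \Sigma)$ meets all the hypotheses of Theorem \ref{main-ct}: $\kappa_0:=\mathbb F_q$ is perfect, $\kappa:=\mathbb F$, the curve $Y$ is geometrically connected (so that $\overline X\times_{\mathbb F_q}\mathbb F$ of $Y$ is connected), $S$ and $\Sigma$ are finite disjoint sets of closed points on $Y$, and $S$ is non-empty and contains the ramified locus of $X\to Y$. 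Hence Theorem \ref{main-ct}(1) gives that $T_\ell(M_{\overline S,\overline\Sigma})$ is a finitely generated projective $\mathbb Z_\ell[G]$--module; it is moreover $\mathbb Z_\ell$--free of finite rank, hence finitely generated over $\mathbb Z_\ell[G]$, and by Remark \ref{G-equiv} it is a topological $\mathbb Z_\ell[[\overline G]]$--module.

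For part (2), I would set $R:=\mathbb Z_\ell[G]$ and $\mathcal G:=\Gamma=\text{Gal}(\mathbb F/\mathbb F_q)$. Then $R$ is a compact semi-local topological ring (being module-finite over the complete local ring $\mathbb Z_\ell$), $\mathcal G$ is pro-cyclic with distinguished topological generator $\gamma$, and $\mathbb Z_\ell[[\overline G]]=R[[\mathcal G]]$. By part (1), $M:=T_\ell(M_{\overline S,\overline\Sigma})$ is projective and finitely generated as an $R$--module, so Proposition \ref{fitting-calculation} applies and yields that $M$ is finitely presented over $R[[\mathcal G]]$ together with the identity
$${\rm Fit}_{\mathbb Z_\ell[[\overline G]]}(T_\ell(M_{\overline S,\overline\Sigma}))=(F(\gamma^{-1})),\qquad F(u):={\rm det}_{\mathbb Z_\ell[G]}(1-\mu_\gamma\cdot u\mid T_\ell(M_{\overline S,\overline\Sigma})),$$
where $\mu_\gamma$ is the $\mathbb Z_\ell[[\overline G]]$--automorphism of $M$ given by multiplication by $\gamma$. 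It then remains to prove the polynomial identity $F(u)=\Theta_{S,\Sigma}(u)$ in $\mathbb Z_\ell[G][u]$.

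To establish this, I would apply the base-change formula (\ref{base-change}) to the $R$--algebra $\mathbb Q_\ell[G]$: the image of $F(u)$ in $\mathbb Q_\ell[G][u]$ is ${\rm det}_{\mathbb Q_\ell[G]}(1-\mu_\gamma\cdot u\mid T_\ell(M_{\overline S,\overline\Sigma})\otimes_{\mathbb Z_\ell}\mathbb Q_\ell)$, which equals $\Theta_{S,\Sigma}(u)$ by Theorem \ref{Deligne}. Since $\Theta_{S,\Sigma}(u)\in\mathbb Z[G][u]\subseteq\mathbb Z_\ell[G][u]$ and the natural inclusion $\mathbb Z_\ell[G][u]\hookrightarrow\mathbb Q_\ell[G][u]$ is injective, this forces $F(u)=\Theta_{S,\Sigma}(u)$ already in $\mathbb Z_\ell[G][u]$; evaluating at $u=\gamma^{-1}$ gives the asserted equality of $\mathbb Z_\ell[[\overline G]]$--ideals. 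For $\ell\ne p$ there is really no serious obstacle: the arithmetic input is entirely Deligne's Theorem \ref{Deligne}, and everything else is the profinite-group-ring Fitting formalism already packaged in Proposition \ref{fitting-calculation}. The only point requiring a little care is that the characteristic polynomial over the non-semisimple ring $\mathbb Z_\ell[G]$ is defined through the stably-free construction for projective modules, which is exactly why (\ref{base-change}) must be invoked to compare it with the honest determinant over $\mathbb Q_\ell[G]$ occurring in Theorem \ref{Deligne}. The genuinely hard case is $\ell=p$ (treated in the next subsection), where Deligne's $\ell$--adic description is not available and one must instead route through crystalline cohomology and the Bloch--Illusie comparison between crystalline and $p$--adic \'etale cohomology.
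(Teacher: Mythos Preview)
Your proposal is correct and follows essentially the same route as the paper: part (1) is read off from Theorem \ref{main-ct}, and part (2) is obtained by applying Proposition \ref{fitting-calculation} with $R=\mathbb Z_\ell[G]$, $\mathcal G=\Gamma$, $g=\gamma$, and then identifying the resulting characteristic polynomial with $\Theta_{S,\Sigma}(u)$ via Deligne's Theorem \ref{Deligne} and the base-change formula (\ref{base-change}). The only cosmetic difference is the order in which you invoke Proposition \ref{fitting-calculation} and the base-change identification.
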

\begin{proof} Obviously, part (1) above is a particular case of Theorem \ref{main-ct}. (Set $\kappa_0:=\mathbb F_q$ and $\kappa:=\mathbb F$.)
Now, part (1) above and the base-change equalities (\ref{base-change}) permit us to base-change from $\mathbb Q_\ell[G]$ down to $\mathbb Z_\ell[G]$ in the statement of Theorem \ref{Deligne}, which leads to an equality
$$\Theta_{S, \Sigma}(u)={\rm det}_{\mathbb Z_\ell[G]}(1-\gamma\cdot u\mid T_{\ell}(M_{\overline S, \overline\Sigma}))\,.$$
Finally, part (2) of the statement is a direct consequence of the equality above and Proposition \ref{fitting-calculation} applied to $M:=T_\ell(M_{\overline S, \overline\Sigma})$,
$R:=\mathbb Z_\ell[G]$, $\mathcal G:=\Gamma$, and $g:=\gamma$. Note that $R$ is indeed compact in the $\ell$--adic topology and semi-local (of local direct summands $\mathbb Z_\ell[\chi][L]$, where $\chi$ runs through a complete
set of representatives for $G(\overline{\mathbb Q_\ell}/\mathbb Q_\ell)$--conjugacy classes of $\overline{\mathbb Q_\ell}$--valued irreducible characters of $G$, and $L$ is the $\ell$--Sylow subgroup of $G$.)
\end{proof}

\subsection{\bf The case $\ell=p$. } Compared to the case $\ell\ne p$, the main difficulty in this case stems from the fact that the equality in Theorem \ref{Deligne} does not
hold for $\ell=p$. Roughly speaking, this is due to the fact that
$\text{rank}_{\mathbb Z_p}T_p(M_{\overline S, \overline \Sigma})$ is
strictly smaller than $\text{rank}_{\mathbb Z_\ell}T_\ell(M_{\overline
S, \overline \Sigma})$, for $\ell\ne p$, as follows immediately from
Theorem \ref{ml-free} above. A more enlightening way of saying this
is that the $\mathbb Z_p$--rank of $p$--adic \'etale cohomology in
characteristic $p$ is smaller than the $\mathbb Z_\ell$--rank of
$\ell$--adic \'etale cohomology, for all $\ell\ne p$. (In fact, the
modules $T_\ell(M_{\overline S, \overline \Sigma})$ can be
identified with the functorial duals of the first $\ell$--adic
\'etale cohomology groups of $M_{\overline S, \overline \Sigma}$.)
This is why, in order to obtain a $p$--adic equality similar to the
one in Theorem \ref{Deligne}, one has to replace $p$--adic \'etale
cohomology with something larger (of the right rank), namely
crystalline cohomology. We describe this next.
\medskip

If $W:=W(\mathbb F)$ denotes the ring of Witt vectors associated to
$\mathbb F$, then we let $H^1_{\rm cris}:=H^1_{\rm cris}(\overline
X/W)$  denote the first crystalline cohomology group associated to
$\overline X$. We denote by $H^1_p:=H^1_{\text{\'et}}(\overline X,
\mathbb Z_p)$ the first $p$--adic \'etale cohomology group associated
to $\overline X$. For the properties of crystalline and \'etale
cohomology relevant in this context, the reader can consult the
Appendix of \cite{Popescu-Stark} and the references therein. In
particular, we remind the reader that these are free, finite rank
$W$--modules and $\mathbb Z_p$--modules, respectively, on which the
Galois group $G$ acts naturally. The $q$--power geometric Frobenius
endomorphism $F$ associated to $\overline X$ induces a
$W[G]$--module endomorphism and a $\mathbb Z_p[G]$--module
endomorphisms (both denoted by $F^\ast$) of  $H^1_{\rm cris}$ and
$H^1_p$, respectively. An important theorem of Bloch and Illusie
(see II.5.4 in \cite{Illusie-deRham}, Lemma 3.3 in \cite{Crew-Katz}, or Appendix of \cite{Popescu-Stark}) identifies $H^1_p$ with a $\mathbb
Z_p[G]$--submodule of  $H^1_{\rm cris}$, such that
\begin{equation}\label{et-cris} H^1_p\otimes_{\mathbb Z_p}\mathbb C_p = \left(H^1_{\rm cris}\otimes_{W}\mathbb C_p\right)_0\,,\end{equation}
where $\left(H^1_{\rm cris}\otimes_{W}\mathbb C_p\right)_0$ is the $\mathbb C_p[G]$--submodule
of
$H^1_{\rm cris}\otimes_{W}\mathbb C_p$ on which $F^\ast$ acts with eigenvalues which are $p$--adic units (i.e. of $p$--adic valuation equal to $0$.)
We have a direct sum decomposition
\begin{equation}\label{dec-cris}
H^1_{\rm cris}\otimes_{W}\mathbb C_p=\left(H^1_{\rm cris}\otimes_{W}\mathbb C_p\right)_0\oplus \left(H^1_{\rm cris}\otimes_{W}\mathbb C_p\right)_{>0}
\end{equation}
in the category of $\mathbb C_p[G]$--modules, where $\left(H^1_{\rm cris}\otimes_{W}\mathbb C_p\right)_{>0}$ is the $\mathbb C_p[G]$--submodule of $H^1_{\rm cris}\otimes_{W}\mathbb C_p$ on which $F^\ast$ acts with eigenvalues of strictly positive $p$--adic valuation.
\medskip

When dealing with the $G$--equivariant (polynomial) $L$--function $\Theta_{S, \Sigma}(u)$, it is more convenient to work with the functorial duals $H_{1, \rm cris}$ and $H_{1,p}$
of $H^1_{\rm cris}$ and $H^1_p$, respectively (i.e. the homology groups.) The (contravariant) action of the geometric Frobenius $F^\ast$ on the cohomology groups turns into the (covariant)
action of the arithmetic Frobenius $F_\ast:=\gamma$ on the homology groups. Of course, we have an equality and direct sum decomposition similar to (\ref{et-cris}) and (\ref{dec-cris}),
respectively, with homology replacing co-homology and the subscripts ``$0$'' and ``$>0$'' referring to the $p$--adic valuations of the eigenvalues of $\gamma$ acting on homology. We remind the reader that there
is a canonical isomorphism ($\mathbb Z_p[G]$--linear and preserving the $\gamma$--action on both sides)
\begin{equation}\label{etale-jacobian}
H_{1,p}\overset\sim\to T_p(J_{\overline X})\,,
\end{equation}
where $T_p(J_{\overline X})$ denotes the $p$--adic Tate module of the Jacobian $J_{\overline X}$ associated to $\overline X$ (see \cite{Popescu-Stark}.) The $p$--adic homological interpretation of
$\Theta_{S, \Sigma}(u)$ is given by the following theorem, essentially due to Berthelot (see the Appendix of \cite{Popescu-Stark} and \S2.5 of \cite{Illusie-cohcris}.)

\begin{theorem}\label{theta-poly}  The following equality holds true in $\mathbb C_p[G][u]$ ($\det$ always taken over $\mathbb C_p[G]$).
$$
\Theta_{S, \Sigma}(u)=\frac{\prod_{v\in \Sigma}(1-\sigma_v^{-1}\cdot(qu)^{d_v})}{\det(1-\gamma\cdot qu\mid \mathbb C_p[G/H])}\cdot  \det(1-\gamma\cdot u\mid H_{1, \rm cris}\otimes_W\mathbb C_p)\cdot \det(1-\gamma u\mid {\rm Div}^0(\overline S)\otimes\mathbb C_p)\,,
$$
where $\gamma$ acts on $\mathbb C_p[G/H]$ by multiplication with the inverse of the canonical generator $\overline\gamma$ of $G/H$.
\end{theorem}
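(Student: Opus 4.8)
The plan is to prove Theorem \ref{theta-poly} as the crystalline analogue of Deligne's $\ell$--adic formula (Theorem \ref{Deligne}): one decomposes $\Theta_{S,\Sigma}(u)$ along the weight filtration of the Picard $1$--motive $M_{\overline S,\overline\Sigma}$, evaluates the torus and lattice contributions explicitly, and then replaces the weight--one (Jacobian) contribution by crystalline homology by means of the comparison results of Berthelot and of Katz and Messing. A convenient feature is that the asserted identity lives in $\mathbb C_p[G][u]$ and $\mathbb C_p$ has characteristic $0$, so by Maschke's theorem $\mathbb C_p[G]\cong\prod_\chi\mathbb C_p$, the product over the characters $\chi\colon G\to\mathbb C_p^\times$; hence it suffices to check the identity after applying each idempotent $e_\chi$, and this is what will make it legitimate to appeal, one character at a time, to the classical crystalline interpretation of Artin $L$--functions, which is available even when $p\mid|G|$.

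First I would fix an auxiliary prime $\ell\ne p$ and invoke Theorem \ref{Deligne}: $\Theta_{S,\Sigma}(u)={\rm det}_{\mathbb Q_\ell[G]}(1-\gamma u\mid T_\ell(M_{\overline S,\overline\Sigma})\otimes_{\mathbb Z_\ell}\mathbb Q_\ell)$. Since $\mathbb Q_\ell[G]$ is semisimple, ${\rm det}_{\mathbb Q_\ell[G]}(1-\gamma u\mid-)$ is multiplicative in short exact sequences of $\mathbb Q_\ell[G]$--modules carrying a $\gamma$--action; applying this to the $\ell$--adic forms of the two sequences $0\to T_\ell(\tau_{\overline\Sigma})\to T_\ell(J_{\overline X,\overline\Sigma})\to T_\ell(J_{\overline X})\to 0$ (the $\ell$--adic form of (\ref{semiabelian-n-torsion})) and $0\to T_\ell(J_{\overline X,\overline\Sigma})\to T_\ell(M_{\overline S,\overline\Sigma})\to{\rm Div}^0(\overline S)\otimes\mathbb Z_\ell\to 0$ underlying the $1$--motive gives a factorization $\Theta_{S,\Sigma}(u)=\mathcal A(u)\,\mathcal B(u)\,\mathcal C(u)$, where $\mathcal C(u)={\rm det}_{\mathbb Q_\ell[G]}(1-\gamma u\mid{\rm Div}^0(\overline S)\otimes\mathbb Q_\ell)$ is already the third factor on the right--hand side of Theorem \ref{theta-poly}, $\mathcal B(u)={\rm det}_{\mathbb Q_\ell[G]}(1-\gamma u\mid T_\ell(J_{\overline X})\otimes\mathbb Q_\ell)$, and $\mathcal A(u)={\rm det}_{\mathbb Q_\ell[G]}(1-\gamma u\mid T_\ell(\tau_{\overline\Sigma})\otimes\mathbb Q_\ell)$. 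The factor $\mathcal A(u)$ is computed directly from $\tau_{\overline\Sigma}(\mathbb F)=\bigoplus_{z\in\pi_0(\overline X)}\bigl(\bigoplus_{v\in\overline\Sigma_z}\mathbb F^\times\bigr)\big/\mathbb F^\times$: passing to $\ell$--adic Tate modules one gets $0\to\bigoplus_{z}\mathbb Z_\ell(1)\to\bigoplus_{v\in\overline\Sigma}\mathbb Z_\ell(1)\to T_\ell(\tau_{\overline\Sigma})\to 0$, in which $\bigoplus_{v\in\overline\Sigma}\mathbb Z_\ell(1)$ is a free $\mathbb Z_\ell[G]$--module with $\mathbb Q_\ell[G]$--determinant of $1-\gamma u$ equal to $\prod_{v\in\Sigma}(1-\sigma_v^{-1}(qu)^{d_v})$ (one companion--matrix factor per closed point $v$ of $Y$ below $\overline\Sigma$, the shift $u\mapsto qu$ coming from the Tate twist), while $\bigoplus_z\mathbb Z_\ell(1)$ is $\mathbb Z_\ell[G/H](1)$ with $\gamma$ acting by $q\overline\gamma^{-1}$, contributing ${\rm det}(1-\gamma\cdot qu\mid\mathbb Q_\ell[G/H])$; hence $\mathcal A(u)=\prod_{v\in\Sigma}(1-\sigma_v^{-1}(qu)^{d_v})\big/{\rm det}(1-\gamma\cdot qu\mid\mathbb Q_\ell[G/H])$, the first factor on the right--hand side of Theorem \ref{theta-poly}. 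Finally $\mathcal A$, $\mathcal B$, $\mathcal C$ are independent of $\ell$ with coefficients in $\mathbb Q[G]$ (for $\mathcal B=\Theta_{S,\Sigma}/(\mathcal A\mathcal C)$ one uses that $\mathcal A\mathcal C$ is a non--zero--divisor of constant term $1$), so the factorization holds in $\mathbb Q[G][u]$, hence in $\mathbb C_p[G][u]$.

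It remains to identify $\mathcal B(u)$ with ${\rm det}_{\mathbb C_p[G]}(1-\gamma u\mid H_{1,{\rm cris}}\otimes_W\mathbb C_p)$ in $\mathbb C_p[G][u]$ --- the middle factor of the statement. By transposition (the computation in the proof of Corollary \ref{fitting-duals}(1), together with the fact that $\gamma=F_\ast$ on homology and the geometric Frobenius $F^\ast$ on cohomology are transpose to one another) and the base--change relations (\ref{base-change}) from $K_0:={\rm Frac}(W)$, the right--hand side above is the image in $\mathbb C_p[G][u]$ of ${\rm det}_{K_0[G]}(1-F^\ast u\mid H^1_{\rm cris}\otimes_W K_0)$, whereas $\mathcal B(u)$ is the transpose of the characteristic polynomial of $F^\ast$ acting on $H^1_{\text{\'et}}(\overline X,\mathbb Q_\ell)$. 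After applying $e_\chi$, the equality of these two characteristic polynomials is exactly the assertion that the numerator of the Artin $L$--function $L(X/Y,\chi,u)$ is computed alike by $\ell$--adic \'etale cohomology (Grothendieck--Lefschetz, $\ell\ne p$) and by crystalline cohomology of the curve with $\chi$--twisted coefficients (Berthelot), the two being reconciled by Katz--Messing; in the form needed this is the content of the Appendix of \cite{Popescu-Stark} and of \S2.5 of \cite{Illusie-cohcris}. I expect this last comparison to be the real obstacle. Theorem \ref{Deligne} does not suffice here, and not merely because it is stated for $\ell\ne p$: at the characteristic prime the $p$--adic \'etale realization $H_{1,p}\cong T_p(J_{\overline X})$ has $\mathbb Z_p$--rank $\gamma_{\overline X}$, strictly smaller than the rank $2g_{\overline X}$ of $H_{1,{\rm cris}}$ (see Remark \ref{genus}), so one genuinely has to rerun the trace--formula argument with crystalline cohomology, which sees the full $H^1$ of $\overline X$. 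On the technical side, isolating the $\chi$--component on the crystalline side, matching the precise shape of the $S$-- and $\Sigma$--modification factors, and treating the case $H\ne G$ (in which $\overline X$ is disconnected and the factor $\mathbb C_p[G/H]$ genuinely intervenes) will all require careful bookkeeping. Granting the comparison, substituting $\mathcal B(u)={\rm det}_{\mathbb C_p[G]}(1-\gamma u\mid H_{1,{\rm cris}}\otimes_W\mathbb C_p)$ and the explicit forms of $\mathcal A(u)$ and $\mathcal C(u)$ into $\Theta_{S,\Sigma}(u)=\mathcal A(u)\mathcal B(u)\mathcal C(u)$ yields the identity of Theorem \ref{theta-poly}; alternatively one bypasses Theorem \ref{Deligne} entirely and argues throughout with Berthelot's crystalline Lefschetz trace formula for the open curve $\overline X\setminus\overline S$, but the route above makes the torus and lattice contributions most transparent.
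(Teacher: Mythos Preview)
Your proposal is correct and, in fact, gives considerably more detail than the paper itself, which does not prove this theorem but simply refers the reader to the Appendix of \cite{Popescu-Stark} and to \S2.5 of \cite{Illusie-cohcris}. Your route---factoring $\Theta_{S,\Sigma}$ along the weight filtration of $M_{\overline S,\overline\Sigma}$ via Theorem~\ref{Deligne}, computing the torus and lattice pieces explicitly, and then replacing the weight--one (Jacobian) piece by crystalline homology via the Berthelot/Katz--Messing comparison---is precisely the argument underlying those references, and you have correctly isolated the crystalline/$\ell$--adic comparison for $H^1$ of the curve as the substantive input.
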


\begin{remark}\label{sigma}
Note that if we let $\pi:G\twoheadrightarrow G/H$ denote the canonical projection, then $\pi(\sigma_v)=\overline\gamma^{d_v}$.
This implies right away that we have a divisibility in $\mathbb C_p[G][u]$
$${\rm det}_{\mathbb C_p[G]}(1-\gamma\cdot qu\mid \mathbb C_p[G/H])\mid (1-\sigma_v^{-1}\cdot(qu)^{d_v})\,,$$
for all $v\in\Sigma$. Consequently, we have
$$\frac{\prod_{v\in \Sigma}(1-\sigma_v^{-1}\cdot(qu)^{d_v})}{\det_{\mathbb C_p[G]}(1-\gamma\cdot qu\mid \mathbb C_p[G/H])}\in\mathbb C_p[G][u]\,.$$
\end{remark}
\bigskip

Next, we express the right-hand side of the equality in the Theorem above as a product
of two polynomials $P, Q\in\mathbb C_p[G][u]$, which we define below.

$$\begin{array}{ll}
P(u)&:={\rm det}_{\mathbb C_p[G]}(1-\gamma\cdot u\mid \left(H_{1, \rm cris}\otimes_W\mathbb C_p\right)_0)\cdot{\rm det}_{\mathbb C_p[G]}(1-\gamma u\mid {\rm Div}^0(\overline S)\otimes\mathbb C_p),\\
&\\
Q(u)&: =\frac{\prod_{v\in \Sigma}(1-\sigma_v^{-1}\cdot(qu)^{d_v})}{\det_{\mathbb C_p[G]}(1-\gamma\cdot qu\mid \mathbb C_p[G/H])}\cdot{\rm det}_{\mathbb C_p[G]}(1-\gamma\cdot u\mid \left(H_{1, \rm cris}\otimes_W\mathbb C_p\right)_{>0})\,.
\end{array}$$
\medskip

\begin{proposition}\label{poly-integrality} With notations as above, we have the following.
\begin{enumerate} \item
$P(u)={\rm det}_{\mathbb Z_p[G]}(1-\gamma u\mid T_p(M_{\overline S, \overline \Sigma}))\,.$
\item $P(u)\in\mathbb Z_p[G][u]\,.$
\item $Q(u)\in\mathbb Z_p[G][u]\,.$
\end{enumerate}
\end{proposition}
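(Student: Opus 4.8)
The plan is to establish (1) first — it contains (2) as an immediate consequence, since its right-hand side is by construction a polynomial over $\mathbb Z_p[G]$ — and then to deduce (3) from (1) together with the factorization $\Theta_{S,\Sigma}(u)=P(u)Q(u)$ in $\mathbb C_p[G][u]$, which follows from Theorem \ref{theta-poly} and the direct-sum decomposition (\ref{dec-cris}) applied to homology.

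For part (1) I would work throughout over the semisimple ring $\mathbb C_p[G]$ and only descend to $\mathbb Z_p[G]$ at the very end. Using the homological analogues of (\ref{et-cris}) and (\ref{etale-jacobian}) one gets $\gamma$-equivariant $\mathbb C_p[G]$-isomorphisms
$$\left(H_{1,\rm cris}\otimes_W\mathbb C_p\right)_0\;\simeq\;H_{1,p}\otimes_{\mathbb Z_p}\mathbb C_p\;\simeq\;T_p(J_{\overline X})\otimes_{\mathbb Z_p}\mathbb C_p\,.$$
Next, tensoring the exact sequence $0\to T_p(J_{\overline X})\to T_p(M_{\overline S,\overline\Sigma})\to {\rm Div}^0(\overline S)\otimes\mathbb Z_p\to 0$ (which holds in characteristic $p$ by Remark \ref{p-torsion}) with $\mathbb C_p$ over $\mathbb Z_p$ yields a $\gamma$-equivariant short exact sequence of $\mathbb C_p[G]$-modules; since $\mathbb C_p[G]$ is semisimple this sequence splits as $\mathbb C_p[G]$-modules, so with respect to any such splitting the operator $1-\gamma u$ is block upper triangular and its determinant multiplies. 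Combining these facts gives
$$P(u)=\det\nolimits_{\mathbb C_p[G]}(1-\gamma u\mid T_p(J_{\overline X})\otimes_{\mathbb Z_p}\mathbb C_p)\cdot\det\nolimits_{\mathbb C_p[G]}(1-\gamma u\mid {\rm Div}^0(\overline S)\otimes\mathbb C_p)=\det\nolimits_{\mathbb C_p[G]}(1-\gamma u\mid T_p(M_{\overline S,\overline\Sigma})\otimes_{\mathbb Z_p}\mathbb C_p)\,.$$
By Theorem \ref{main-ct}, $T_p(M_{\overline S,\overline\Sigma})$ is a finitely generated projective $\mathbb Z_p[G]$-module, so ${\rm det}_{\mathbb Z_p[G]}(1-\gamma u\mid T_p(M_{\overline S,\overline\Sigma}))$ is defined in $\mathbb Z_p[G][u]$; the base-change identity (\ref{base-change}) says its image under the injection $\mathbb Z_p[G][u]\hookrightarrow\mathbb C_p[G][u]$ is precisely the last expression above. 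Hence $P(u)$ lies in the image of $\mathbb Z_p[G][u]$ and, identified with its unique preimage, equals ${\rm det}_{\mathbb Z_p[G]}(1-\gamma u\mid T_p(M_{\overline S,\overline\Sigma}))$. This proves (1), and (2) is immediate.

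For part (3), recall that $\Theta_{S,\Sigma}(u)=P(u)Q(u)$ in $\mathbb C_p[G][u]$, and that by Remark \ref{sigma} the factor $Q(u)$ is a genuine polynomial in $\mathbb C_p[G][u]$. Evaluating the defining product for $P$ at $u=0$ gives $P(0)=1$, so by (2) the element $P(u)\in\mathbb Z_p[G][u]$ is a unit in the power series ring $\mathbb Z_p[G][[u]]$. Since $\Theta_{S,\Sigma}(u)\in\mathbb Z[G][u]\subseteq\mathbb Z_p[G][[u]]$, we obtain $Q(u)=\Theta_{S,\Sigma}(u)\cdot P(u)^{-1}\in\mathbb Z_p[G][[u]]$. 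But $Q(u)\in\mathbb C_p[G][u]$, and inside $\mathbb C_p[G][[u]]$ one has $\mathbb Z_p[G][[u]]\cap\mathbb C_p[G][u]=\mathbb Z_p[G][u]$ — a formal power series with coefficients in $\mathbb Z_p[G]$ that is also a polynomial over $\mathbb C_p[G]$ is a polynomial over $\mathbb Z_p[G]$. Therefore $Q(u)\in\mathbb Z_p[G][u]$, as claimed.

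The main obstacle is the multiplicativity of characteristic polynomials across the short exact sequence for $T_p(M_{\overline S,\overline\Sigma})$: neither the submodule $T_p(J_{\overline X})$ nor the quotient ${\rm Div}^0(\overline S)\otimes\mathbb Z_p$ need be $\mathbb Z_p[G]$-projective — indeed, $G$-cohomological triviality is exactly what can fail for $T_p(J_{\overline X})$ alone — so the sequence cannot be split integrally. It is essential to pass to the semisimple coefficient ring $\mathbb C_p[G]$ to split it and obtain the factorization of $P(u)$, and only then to descend the characteristic polynomial back to $\mathbb Z_p[G][u]$, which is legitimate because $T_p(M_{\overline S,\overline\Sigma})$ itself (as opposed to its pieces) is $\mathbb Z_p[G]$-projective by Theorem \ref{main-ct} and because $\mathbb Z_p[G][u]\hookrightarrow\mathbb C_p[G][u]$ is injective.
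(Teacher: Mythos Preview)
Your proof is correct and follows essentially the same route as the paper's. For (1) you argue exactly as the authors do—identify $(H_{1,\rm cris}\otimes\mathbb C_p)_0$ with $T_p(J_{\overline X})\otimes\mathbb C_p$ via (\ref{et-cris}) and (\ref{etale-jacobian}), use the short exact sequence for $T_p(M_{\overline S,\overline\Sigma})$ coming from Remark~\ref{p-torsion}, and then descend to $\mathbb Z_p[G]$ via projectivity (Theorem~\ref{main-ct}) and base-change (\ref{base-change}); your explicit remark about splitting over the semisimple ring $\mathbb C_p[G]$ makes transparent a step the paper leaves implicit. For (3) the paper quotes an elementary lemma (if $P(0)=1$, $P\in R[u]$ and $PQ\in R[u]$ then $Q\in R[u]$), whereas you supply what amounts to its proof by inverting $P$ in $\mathbb Z_p[G][[u]]$ and intersecting with $\mathbb C_p[G][u]$—same idea, slightly different packaging.
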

\begin{proof} If we combine (\ref{etale-jacobian}) above  with the definition of $P$, we obtain the following equalities.
$$\begin{array}{ll}
   P(u)& ={\rm det}_{\mathbb C_p[G]}(1-\gamma\cdot u\mid H_{1,p}\otimes_{\mathbb Z_p}\mathbb C_p)\cdot{\rm det}_{\mathbb C_p[G]}(1-\gamma u\mid {\rm Div}^0(\overline S)\otimes\mathbb C_p) \\
       & ={\rm det}_{\mathbb C_p[G]}(1-\gamma\cdot u\mid T_p(J_{\overline X})\otimes_{\mathbb Z_p}\mathbb C_p)\cdot{\rm det}_{\mathbb C_p[G]}(1-\gamma u\mid {\rm Div}^0(\overline S)\otimes\mathbb C_p)\\
       &={\rm det}_{\mathbb C_p[G]}(1-\gamma\cdot u\mid T_p(M_{\overline S, \emptyset})\otimes_{\mathbb Z_p}\mathbb C_p)\,.
  \end{array}$$
  However, Remark \ref{p-torsion} shows that we have an equality of of $\mathbb Z_p[G]$--modules
  $$T_p(M_{\overline S, \overline\Sigma}) = T_p(M_{\overline S, \emptyset})\,,$$
  and Theorem \ref{main-ct} shows that these modules are $\mathbb Z_p[G]$--projective of finite rank. Therefore,
  we have
  $${\rm det}_{\mathbb C_p[G]}(1-\gamma\cdot u\mid T_p(M_{\overline S, \emptyset})\otimes_{\mathbb Z_p}\mathbb C_p)={\rm det}_{\mathbb Z_p[G]}(1-\gamma u\mid T_p(M_{\overline S, \overline \Sigma}))\,,$$
  which proves part (1). (Note that we have applied equalities (\ref{base-change}) to base-change from
$\mathbb C_p[G]$ down to $\mathbb Z_p[G]$.)
  Of course, part (2) is a consequence of part (1).
  In order to prove part (3), recall the following elementary result.

  \begin{lemma} Let $R$ be a subring of the commutative ring $R'$ with $1$. Let $P, Q\in S[u]$ be two polynomials, such that
  $P(0)=1$, $P\in R[u]$ and $P\cdot Q\in R[u]$. Then $Q\in R[u]$.
  \end{lemma}
  \begin{proof} See Lemma A.5 in the Appendix of \cite{Popescu-Stark}. \end{proof}

  \noindent Now, apply the Lemma above to our polynomials $P$ and $Q$ and rings $R:=\mathbb Z_p[G]$ and $R':=\mathbb C_p[G]$ and keep in mind
  that $P\in R[u]$ (which is (2) above), $P(0)=1$ (from the definition of $P$ as characteristic polynomial) and that
  $P\cdot Q=\Theta_{S, \Sigma}(u)\in\mathbb Z[G][u]\subseteq \mathbb Z_p[G][u]\,.$ This concludes the proof of (3).
  \end{proof}
  Our next goal is to prove the following.
  \begin{proposition}\label{Q-invertible}
  We have $Q(\gamma^{-1})\in\mathbb Z_p[[\overline G]]^\times\,.$
  \end{proposition}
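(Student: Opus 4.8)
The plan is to show that $Q(\gamma^{-1})$ has image $1$ under every continuous ring homomorphism from $\mathbb Z_p[[\overline G]]$ to a finite field, which forces it to be a unit. The ring $\mathbb Z_p[[\overline G]]$ is profinite, so an element is a unit as soon as each of its images in a cofinal system of finite quotient rings is a unit (the local inverses then patch to a compatible system), and an element of a finite commutative ring is a unit exactly when its image in each residue field is nonzero; hence it suffices to check that $\rho(Q(\gamma^{-1}))\neq 0$ for every continuous homomorphism $\rho\colon\mathbb Z_p[[\overline G]]\to\overline{\mathbb F}_p$. Such a $\rho$ amounts to a character $\psi\colon G\to\overline{\mathbb F}_p^{\times}$ together with $\zeta:=\rho(\gamma)\in\overline{\mathbb F}_p^{\times}$, which is automatically a root of unity of order prime to $p$ (the pro-$p$ part of $\Gamma$ carries no nontrivial continuous character into $\overline{\mathbb F}_p^{\times}$). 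Choosing a lift of $\psi$ to a $\overline{\mathbb Z}_p^{\times}$-valued character $\chi$ of $G$ and letting $Q^{(\chi)}(u)\in\overline{\mathbb Q}_p[u]$ be the $\chi$-isotypic component of the polynomial $Q(u)\in\mathbb Z_p[G][u]$ (Proposition~\ref{poly-integrality}(3)), a comparison of coefficients shows $\rho(Q(\gamma^{-1}))$ equals the reduction of $Q^{(\chi)}(u)$ modulo the maximal ideal of $\overline{\mathbb Z}_p$, evaluated at $u=\zeta^{-1}$. Since $Q^{(\chi)}(0)=1$, it is therefore enough to prove: for every $\overline{\mathbb Q}_p$-valued character $\chi$ of $G$, all nonconstant coefficients of $Q^{(\chi)}(u)$ have strictly positive $p$-adic valuation, i.e. $Q^{(\chi)}(u)\equiv 1$ modulo the maximal ideal of $\overline{\mathbb Z}_p$; then $\rho(Q(\gamma^{-1}))=1\neq 0$ for every $\rho$ and we are done.

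To establish this I would pass to $\chi$-isotypic components in the factorization $Q(u)=Q_1(u)\cdot{\rm det}_{\mathbb C_p[G]}\bigl(1-\gamma u\mid(H_{1,{\rm cris}}\otimes_W\mathbb C_p)_{>0}\bigr)$, where $Q_1(u):=\bigl(\prod_{v\in\Sigma}(1-\sigma_v^{-1}(qu)^{d_v})\bigr)\big/{\rm det}_{\mathbb C_p[G]}(1-\gamma\cdot qu\mid\mathbb C_p[G/H])$ is the first factor in the definition of $Q$, a polynomial by Remark~\ref{sigma}. The $\chi$-component of the crystalline factor is $\prod_j(1-\alpha_ju)$, with the $\alpha_j$ the eigenvalues of $\gamma$ on the $\chi$-part of $(H_{1,{\rm cris}}\otimes_W\mathbb C_p)_{>0}$; by the very definition of that summand every $v_p(\alpha_j)>0$, so every elementary symmetric function of the $\alpha_j$ has positive valuation and this factor is $1$ plus a polynomial with coefficients of positive valuation. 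The $\chi$-component of $Q_1(u)$ is $\prod_{v\in\Sigma}(1-\chi(\sigma_v)^{-1}(qu)^{d_v})$ divided by $(1-\overline\chi(\overline\gamma)^{-1}qu)$ when $\chi$ is trivial on $H$ (so that $\chi$ descends to $\overline\chi$ on $G/H$), and by $1$ otherwise. The numerator has constant term $1$ and all higher coefficients divisible by $q$. When the denominator is nontrivial one uses $\pi(\sigma_v)=\overline\gamma^{d_v}$ (Remark~\ref{sigma}), which gives $\chi(\sigma_v)^{-1}(qu)^{d_v}=\bigl(\overline\chi(\overline\gamma)^{-1}qu\bigr)^{d_v}$ for each $v\in\Sigma$; since $\Sigma\neq\emptyset$, fixing such a $v$ and invoking $(1-x)\mid(1-x^{d_v})$ in $\overline{\mathbb Z}_p[u]$ with $x=\overline\chi(\overline\gamma)^{-1}qu$ shows the division is exact over $\overline{\mathbb Z}_p$ and exhibits $Q_1^{(\chi)}(u)$ as a product of polynomials each with constant term $1$ and all higher coefficients divisible by $q$. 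As $v_p(q)>0$, the factor $Q_1^{(\chi)}(u)$ too is $1$ plus a polynomial of positive valuation, and a product of two such polynomials again has this shape. Hence $Q^{(\chi)}(u)\equiv 1$ modulo the maximal ideal of $\overline{\mathbb Z}_p$, as needed.

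The genuinely essential input is the defining property of the ``$>0$'' summand of crystalline cohomology — that $\gamma$ acts on it with eigenvalues of strictly positive $p$-adic valuation — which is exactly what makes its Frobenius characteristic polynomial reduce to $1$; everything else (the Euler factors at $\Sigma$ being $\equiv 1\pmod q$, the exactness and integrality of the division by the $\mathbb C_p[G/H]$-determinant, and the reduction of the unit question in $\mathbb Z_p[[\overline G]]$ to checking images in finite residue fields) is routine. The one spot that wants care is the treatment of $Q_1^{(\chi)}$: one must separate the cases $\chi|_H=1$ and $\chi|_H\neq 1$, and use $\Sigma\neq\emptyset$ together with the congruence $\pi(\sigma_v)=\overline\gamma^{d_v}$ to guarantee that the ratio defining $Q_1$ stays a polynomial whose coefficients are all $\equiv$ its constant term $1$ modulo $q$.
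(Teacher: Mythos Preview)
Your proof is correct and follows essentially the same approach as the paper: both reduce to showing that for every character $\chi$ of $G$ the polynomial $Q^{(\chi)}(u)$ lies in $1+u\cdot\mathfrak m[u]$ (equivalently, reduces to $1$ modulo the maximal ideal), by splitting $Q$ into its crystalline and $\Sigma$--factors and using that the eigenvalues on $(H_{1,\rm cris}\otimes_W\mathbb C_p)_{>0}$ have strictly positive valuation while the $\Sigma$--factor involves only powers of $q$. The only cosmetic difference is your unit criterion: you test against continuous ring maps to $\overline{\mathbb F}_p$ (residue fields of the profinite ring), whereas the paper tests against continuous $p$--adic characters $\psi\colon\overline G\to O_\psi^\times$ and invokes that units descend along integral ring extensions; both criteria are equivalent and lead to the identical core computation.
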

\begin{proof} We need the following elementary Lemma.

\begin{lemma} Let $\mathcal G$ be a profinite abelian group and let $x$ be an element in the profinite
group ring $\mathbb Z_p[[\mathcal G]]$. The following are equivalent.
\begin{enumerate}\item $x\in \mathbb Z_p[[\mathcal G]]^\times\,.$
\item For every continuous $p$--adic character $\psi$ of $\mathcal G$, we have $\psi(x)\in O_{\psi}^\times$,
where $O_\psi$ denotes the finite extension of $\mathbb Z_p$ generated by the values of $\psi$.
\end{enumerate}
\end{lemma}
\begin{proof} We have $\mathbb Z_p[[\mathcal G]]=\underset\longleftarrow\lim\, \mathbb Z_p[[\mathcal G/\mathcal H]]$ and therefore $\mathbb Z_p[[\mathcal G]]^\times= \underset\longleftarrow\lim\, \mathbb Z_p[[\mathcal G/\mathcal H]]^\times$,
where the projective limits are taken with respect to all the subgroups $\mathcal H$ of finite index in $\mathcal G$. Since every continuous character of $\mathcal G$ factors through a character of
$\mathcal G/\mathcal H$, for some $\mathcal H$ as above, it suffices to show the equivalence of (1) and (2) in the situation where $\mathcal G$ is finite. However, in the case where $\mathcal G$ is finite,
we have an equality $\mathbb Z_p[[\mathcal G]]=\mathbb Z_p[\mathcal G]$ and an injective ring morphism
$$\mathbb Z_p[\mathcal G]\longrightarrow \oplus_\psi O_\psi\,,\quad y\longrightarrow \oplus_\psi \psi(y)\,,$$
which induces an integral extension of rings, where the sum is taken
with respect to all the irreducible $\mathbb C_p$--valued characters
$\psi$ of $\mathcal G$. Since it is well known (and quite
elementary) that in an integral ring extension $\mathcal
A\subseteq\mathcal  B$ we have $x\in\mathcal A^\times$ if and only
if $x\in\mathcal B^\times$, for all $x\in \mathcal A$, the
equivalence of (1) and (2) is established.
\end{proof}
\noindent Now, we are ready to prove Proposition \ref{Q-invertible}. First, let us note that any continuous $p$--adic character $\psi$ of $\overline G=G\times\Gamma$
is a product $\psi=(\chi, \rho)$, where $\chi$ is a $p$--adic character of $G$ and $\rho$ is a continuous $p$--adic character of $\Gamma$. Therefore, we have
$$\psi(Q(\gamma^{-1}))=Q^\chi(\rho(\gamma^{-1})),$$
where $Q^\chi(u)$ is the polynomial in $O_\chi[u]$ obtained by evaluating $\chi$ at the coefficients of $Q(u)\in\mathbb Z_p[G][u]$. In order to simplify notation,
let us write $Q(u)=Q_{\Sigma}(u)\cdot Q_{\rm cris}(u)$, where
$$Q_{\Sigma}(u):=\frac{\prod_{v\in \Sigma}(1-\sigma_v^{-1}\cdot(qu)^{d_v})}{\det_{\mathbb C_p[G]}(1-\gamma\cdot qu\mid \mathbb C_p[G/H])}, \quad Q_{\rm cris}(u):={\rm det}_{\mathbb C_p[G]}(1-\gamma\cdot u\mid \left(H_{1, \rm cris}\otimes_W\mathbb C_p\right)_{>0}).$$
Note that in general we have $Q_\Sigma(u), Q_{\rm cris}(u)\in \mathbb C_p[G][u]\setminus\mathbb Z_p[G][u]$, but $Q(u)\in\mathbb Z_p[G][u]$ (see Proposition \ref{poly-integrality}.) Now, let $\psi=(\chi, \rho)$ be a character as above.
Remark \ref{sigma} implies that we have

$$
\begin{array}{ll}
Q_\Sigma^\chi(u)=&\left\{
                     \begin{array}{ll}
                       \prod\limits_{v\in \Sigma}(1-\chi(\sigma_v^{-1})\cdot(qu)^{d_v}), & \hbox{$\text{ if } \chi\mid_H\ne 1_H;$} \\
                       (\sum\limits_{i=0}^{d_{v_0}-1}\chi^i(\overline\gamma^{-1}) q^i\cdot u^i)\,\cdot\prod\limits_{v\in \Sigma\setminus\{v_0\}}(1-\chi(\sigma_v^{-1})\cdot(qu)^{d_v}), & \hbox{$\text{ if } \chi\mid_H= 1_H.$}
                     \end{array}
                   \right.\\
&\\
Q_{\rm cris}^\chi(u)=&\prod\limits_{i=1}^{d_\chi}(1-\alpha_{i, \chi}\cdot u)\,,\end{array}
$$
where $v_0\in\Sigma$ is arbitrary, and the $\alpha_{i,\chi}$'s are the eigenvalues of $\gamma$ acting on the $d_\chi$--dimensional $\mathbb C_p$--vector space $\left(H_{1, \rm cris}\otimes_W\mathbb C_p\right)_{>0}^\chi$, which is
the $\chi$--eigenspace of $\left(H_{1, \rm cris}\otimes_W\mathbb C_p\right)_{>0}$ with respect to the action of $G$. Now, let $O$ denote a large enough finite integral extension of $W$ inside $\mathbb C_p$, containing the
values of $\chi$ and $\rho$ and the eigenvalues $\alpha_{i,\chi}$  for all $i=1,\dots,d_\chi$. Note that since $\gamma$ acts $W$--linearly on the free, finite rank $W$--module $H_{1, \rm cris}$, the eigenvalues $\alpha_{i,\chi}$ are integral over $W$, for all $\chi$ and $i=1,\dots,d_\chi$. Further, if we let $\mathfrak m_O$ denote the maximal ideal of $O$, by the definition of $\left(H_{1, \rm cris}\otimes_W\mathbb C_p\right)_{>0}$, we have $\alpha_{i,\chi}\in\mathfrak m_O$, for all $\chi$ and $i$ as above. This observation combined with the obvious fact that $q\in\mathfrak m_O$ implies that
$$Q_\Sigma^\chi(u),\,Q_{\rm cris}^\chi(u),\, Q^\chi(u)\in 1+u\cdot\mathfrak m_O[u]\,.$$
Consequently, we have
$$\psi(Q(\gamma^{-1}))=Q^\chi(\rho(\gamma^{-1}))\in 1+\mathfrak m_O\subseteq O^\times\,.$$
However, since $Q(\gamma^{-1})\in\mathbb Z_p[[\overline G]]$, we know that $\psi(Q(\gamma^{-1}))\in O_{\psi}$. Consequently,
$$\psi(Q(\gamma^{-1}))\in O_\psi\cap O^\times = O_\psi^\times\,.$$
According to the Lemma above, this concludes the proof of Proposition \ref{Q-invertible}.
\end{proof}
Now, we are ready to state and prove the main result of this subsection.

\begin{theorem}\label{Fitting-p} Under the above hypotheses, the following hold.
\begin{enumerate}
\item The $\mathbb Z_p[G]$--module $T_p(M_{\overline S, \overline\Sigma})$ is projective.
\item
We have the following equality of ideals in $\mathbb Z_p[[\overline G]]$.
$$\left(\Theta_{S, \Sigma}(\gamma^{-1})\right)=\text{\rm Fit}_{\mathbb Z_p[[\overline G]]}(T_p(M_{\overline S, \overline\Sigma}))\,.$$
\end{enumerate}
\end{theorem}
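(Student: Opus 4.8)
Part (1) requires no new work: it is the special case of Theorem \ref{main-ct} obtained by taking $\kappa_0 := \mathbb F_q$, $\kappa := \mathbb F$ and $\ell := p$. So I would dispatch it in one line.

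For part (2), the plan is to run the argument of Theorem \ref{Fitting-ell} verbatim, with the polynomial $P(u)$ of Proposition \ref{poly-integrality} playing the role that Deligne's formula (Theorem \ref{Deligne}) plays when $\ell\ne p$. Concretely, I would apply Proposition \ref{fitting-calculation} to $R := \mathbb Z_p[G]$, $\mathcal G := \Gamma$, $g := \gamma$ and $M := T_p(M_{\overline S, \overline\Sigma})$. First one checks the structural hypotheses: $\mathbb Z_p[G]$ is compact in its $p$--adic topology and semi-local (module-finite over $\mathbb Z_p$, with local summands $\mathbb Z_p[\chi][L]$, $L$ the $p$--Sylow of $G$, just as in the proof of Theorem \ref{Fitting-ell}); by part (1) the module $M$ is $R$--projective of finite rank; and $M$ carries a continuous $R[[\Gamma]] = \mathbb Z_p[[\overline G]]$--action by Remark \ref{G-equiv}. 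Proposition \ref{fitting-calculation}, combined with the identification $\det_{\mathbb Z_p[G]}(1-\gamma u\mid T_p(M_{\overline S,\overline\Sigma})) = P(u)$ from Proposition \ref{poly-integrality}(1), then yields
$$\text{Fit}_{\mathbb Z_p[[\overline G]]}\big(T_p(M_{\overline S, \overline\Sigma})\big) = \big(P(\gamma^{-1})\big)\,.$$

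It then remains to identify the principal ideal $(P(\gamma^{-1}))$ with $(\Theta_{S,\Sigma}(\gamma^{-1}))$ inside $\mathbb Z_p[[\overline G]]$. Theorem \ref{theta-poly} gives the factorization $\Theta_{S,\Sigma}(u) = P(u)\cdot Q(u)$ in $\mathbb C_p[G][u]$; since all three polynomials have coefficients in $\mathbb Z_p[G]$ by Proposition \ref{poly-integrality} (and $\Theta_{S,\Sigma}\in\mathbb Z[G][u]$), this factorization already holds in $\mathbb Z_p[G][u]$, and applying the continuous $\mathbb Z_p[G]$--algebra map $\mathbb Z_p[G][u]\to\mathbb Z_p[[\overline G]]$, $u\mapsto\gamma^{-1}$, gives $\Theta_{S,\Sigma}(\gamma^{-1}) = P(\gamma^{-1})\cdot Q(\gamma^{-1})$ in $\mathbb Z_p[[\overline G]]$. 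By Proposition \ref{Q-invertible} the factor $Q(\gamma^{-1})$ is a unit of $\mathbb Z_p[[\overline G]]$, so $(\Theta_{S,\Sigma}(\gamma^{-1})) = (P(\gamma^{-1}))$, and combining with the displayed Fitting computation finishes part (2).

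The genuine difficulty of this result has been pushed entirely into the preparatory statements: the crystalline homological expression for $\Theta_{S,\Sigma}(u)$ (Theorem \ref{theta-poly}, resting on Berthelot and on the Bloch--Illusie comparison (\ref{et-cris})), the integrality Proposition \ref{poly-integrality}, and above all the unit assertion Proposition \ref{Q-invertible}, where the slope decomposition (\ref{dec-cris}) enters. Granting those, the theorem above is a short assembly; the only place that calls for any care is confirming that Proposition \ref{fitting-calculation} applies in the present setting, i.e. that $\mathbb Z_p[G]$ is compact and semi-local and that $T_p(M_{\overline S,\overline\Sigma})$ is projective of finite rank over it with continuous pro-cyclic $\Gamma$--action — all of which is either immediate or already recorded.
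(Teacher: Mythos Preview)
Your proposal is correct and follows essentially the same approach as the paper: part (1) is dispatched as a special case of Theorem \ref{main-ct}, and part (2) combines Proposition \ref{fitting-calculation} (applied to $R=\mathbb Z_p[G]$, $\mathcal G=\Gamma$, $g=\gamma$, $M=T_p(M_{\overline S,\overline\Sigma})$) with the identification $P(u)=\det_{\mathbb Z_p[G]}(1-\gamma u\mid M)$ from Proposition \ref{poly-integrality}(1) and the unit statement of Proposition \ref{Q-invertible} to pass from $(P(\gamma^{-1}))$ to $(\Theta_{S,\Sigma}(\gamma^{-1}))$. The only cosmetic difference is the order of presentation---the paper first records the chain of ideal equalities (\ref{ideals}) and then invokes Proposition \ref{fitting-calculation}, whereas you do the Fitting computation first---but the content is identical.
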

\begin{proof} As stated in the proof of Proposition \ref{poly-integrality}, part (1) above is a particular case of Theorem \ref{main-ct}
(the case where $\kappa_0=\mathbb F_q$.) Now, Propositions \ref{poly-integrality}(1) and \ref{Q-invertible} imply that we have the following equalities of ideals in $\mathbb Z_p[[\overline G]]$.
\begin{equation}\label{ideals}
\left(\Theta_{S, \Sigma}(\gamma^{-1})\right)=(P(\gamma^{-1}))=({\rm det}_{\mathbb Z_p[G]}(1-\gamma \cdot u\mid T_p(M_{\overline S, \overline \Sigma}))\mid_{u=\gamma^{-1}})\,.
\end{equation}
Now, part (2) of our Theorem \ref{Fitting-p} is a direct consequence of equalities (\ref{ideals}) and Proposition \ref{fitting-calculation} applied to $M:=T_p(M_{\overline S, \overline\Sigma})$,
$R:=\mathbb Z_p[G]$, $\mathcal G:=\Gamma$, and $g:=\gamma$. Note that $R$ is indeed compact in the $p$--adic topology and semi-local (see the argument which ends the proof of Theorem \ref{Fitting-ell}.)
\end{proof}
\noindent Now, we combine Theorems \ref{Fitting-ell} and
\ref{Fitting-p} to obtain Theorem \ref{Fitting}, which completes the
task set at the beginning of this section.
\medskip

Next, we derive a corollary, which will be useful in the next
section. For that purpose, let $K_{\infty}:=K\mathbb F$ (field
compositum viewed inside some separable closure of $K$.) This is the
field of rational functions of the smooth, projective, irreducible
curve $X_\infty:=X\times_{\mathbb F_r}\mathbb F$, where $\mathbb F_r:=\mathbb
F\cap K$ is the exact field of constants of $K$. Let $G_\infty:={\rm
Gal}(K_\infty/k)$. Since $\Gamma$ is a free abelian pro-finite
group, we have a (non-canonical) isomorphism $G_\infty\simeq
H\times\Gamma$, where $H:={\rm Gal}(K_\infty/k_\infty)\simeq {\rm
Gal}(K/K\cap k_\infty)$. On the other hand, Galois theory and the
natural isomorphism $\Gamma:={\rm Gal}(\mathbb F/\mathbb F_q)\simeq{\rm
Gal}(k_\infty/k)$ permit us to identify $G_\infty$ with the subgroup
of $\overline G:=G\times\Gamma$, consisting of all $(g, \sigma)$,
with $g\in G$ and $\sigma\in\Gamma$, such that
$\pi_G(g)=\pi_{\Gamma}(\sigma)$, where $\pi_G:{\rm
Gal}(K/k)=G\twoheadrightarrow G/H={\rm Gal}(k_\infty\cap K/k)\text{
and }\pi_\Gamma:\Gamma={\rm Gal}(k_\infty/k)\twoheadrightarrow {\rm
Gal}(k_\infty\cap K/k)=G/H$ are the usual projections induced by
Galois restriction. There is an exact sequence in the category of
groups
$$\xymatrix{
1\ar[r] &G_{\infty}\ar[r] &\overline G\ar[r] &G/H\ar[r] &1 },$$
where
the injection sends $\tau\in G_{\infty}$ to $(\tau\mid_{K},
\tau\mid_{{k}_{\infty}})$ and the surjection sends $(g, \sigma)$ in
$G\times\Gamma$ to $\pi_G(g)\pi_{\Gamma}(\sigma)^{-1}$ in $G/H$.
This leads to a canonical identification of $\mathbb
Z_{\ell}[[G_\infty]]$ with a subring of $\mathbb Z_{\ell}[[\overline
G]]$. Since for every prime $v$ in $k$, which is unramified in
$K/k$, we have $\pi_G(\sigma_v)=\pi_{\Gamma}(\gamma^{d_v})$, the
product formula (\ref{product-formula}) shows that, under the above
identification, we have
$$\Theta_{S, \Sigma}(\gamma^{-1})\in\mathbb Z_{\ell}[[G_\infty]]\subseteq\mathbb Z_{\ell}[[\overline G]]\,.$$
Now, we let $M_{S_{\infty}, \Sigma_{\infty}}$ be the Picard $1$--motive associated to $(X_{\infty}, \mathbb F, S_\infty, \Sigma_{\infty})$, where
$S_\infty$ and $\Sigma_{\infty}$ are the sets of closed points on $X_{\infty}$  sitting above points in $S$ and $\Sigma$, respectively.
Its $\ell$--adic realizations $T_{\ell}(M_{S_{\infty}, \Sigma_{\infty}})$ are endowed with natural $\mathbb Z_{\ell}[[G_{\infty}]]$--module structures, for all primes $\ell$.

\begin{corollary}\label{Fitting-connected} For every prime number $\ell$, the following hold.
\begin{enumerate} \item $T_{\ell}(M_{S_{\infty}, \Sigma_{\infty}})$ is a projective $\mathbb Z_{\ell}[H]$--module.
\item We have an equality of $\mathbb Z_{\ell}[[G_\infty]]$--ideals
${\rm Fit}_{\mathbb Z_{\ell}[[G_\infty]]}(T_{\ell}(M_{S_{\infty}, \Sigma_{\infty}}))=(\Theta_{S, \Sigma}(\gamma^{-1}))\,.$
\end{enumerate}
\end{corollary}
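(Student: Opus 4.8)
\medskip
\noindent The plan is to deduce both parts from the results already established by comparing the Picard $1$--motive $M_{S_\infty,\Sigma_\infty}$ of the \emph{connected} curve $X_\infty$ with the Picard $1$--motive $M_{\overline S,\overline\Sigma}$ of the (in general disconnected) curve $\overline X$, exactly as in the passage from Theorem~\ref{connected} to Theorem~\ref{main-ct}, but keeping track in addition of the base--field Galois action. Part~(1) is then immediate: since $X_\infty$ is connected and $M_{S_\infty,\Sigma_\infty}$ is its associated Picard $1$--motive for the $H$--Galois cover $X_\infty\to Y_\infty$ (with $Y_\infty:=Y\times_{\mathbb F_q}\mathbb F$, and $S_\infty,\Sigma_\infty$ the $H$--invariant sets of closed points above $S,\Sigma$), Theorem~\ref{connected}(1) gives that $T_\ell(M_{S_\infty,\Sigma_\infty})$ is $\mathbb Z_\ell[H]$--projective of finite rank.

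\medskip
\noindent For part~(2), the first step is to promote the base--change isomorphism from the proof of Theorem~\ref{main-ct} to a $\mathbb Z_\ell[[\overline G]]$--equivariant one. Recall from that proof that $\overline G$ permutes the connected components of $\overline X$ simply transitively through $\overline G/G_\infty\cong G/H$, that $G_\infty$ is precisely the stabilizer in $\overline G$ of the component with function field $K_\infty$, and that $G_\infty$ acts on that component as $\mathrm{Gal}(K_\infty/k)$. Hence the section $\iota$ used there, which was only recorded as $\mathbb Z[H]$--linear, is in fact $\mathbb Z[G_\infty]$--linear (its image is a $G_\infty$--stable direct summand of $\mathcal K^\times$), so the $\mathbb Z[G]$--isomorphisms $\mathcal K^{(n)}_{\mathcal S,\mathcal T}\cong{}_c\mathcal K^{(n)}_{\mathcal S_c,\mathcal T_c}\otimes_{\mathbb Z[H]}\mathbb Z[G]$ and $\mathcal K^{\times n}_{\mathcal T}\cong{}_c\mathcal K^{\times n}_{\mathcal T_c}\otimes_{\mathbb Z[H]}\mathbb Z[G]$ constructed there are $\mathbb Z[\overline G]$--linear, where one uses the amalgam decomposition $\overline G=G\cdot G_\infty$ with $G\cap G_\infty=H$ to identify $\mathbb Z_\ell[[\overline G]]\cong\mathbb Z_\ell[G]\otimes_{\mathbb Z_\ell[H]}\mathbb Z_\ell[[G_\infty]]$. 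Passing to Tate modules yields a canonical isomorphism of topological $\mathbb Z_\ell[[\overline G]]$--modules
$$T_\ell(M_{\overline S,\overline\Sigma})\;\overset\sim\longrightarrow\;\mathbb Z_\ell[[\overline G]]\otimes_{\mathbb Z_\ell[[G_\infty]]}T_\ell(M_{S_\infty,\Sigma_\infty}).$$

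\medskip
\noindent The second step records the ring--theoretic input and then concludes formally. On the one hand $\mathbb Z_\ell[[\overline G]]$ is free of rank $[G:H]$, hence faithfully flat, over $\mathbb Z_\ell[[G_\infty]]$ (a set of coset representatives of $G_\infty$ in $\overline G$, for instance taken from $G$, is a basis). On the other hand, by the (non--canonical) splitting $G_\infty\simeq H\times\Gamma$ noted just before the corollary, $\mathbb Z_\ell[[G_\infty]]\cong\mathbb Z_\ell[H][[\Gamma]]$ with $\mathbb Z_\ell[H]$ a semi--local compact ring and $\Gamma$ pro--cyclic, so Proposition~\ref{fitting-calculation} applied to $R:=\mathbb Z_\ell[H]$, $\mathcal G:=\Gamma$ and $M:=T_\ell(M_{S_\infty,\Sigma_\infty})$ (which is $R$--projective and finitely generated by part~(1)) shows that $T_\ell(M_{S_\infty,\Sigma_\infty})$ is finitely presented over $\mathbb Z_\ell[[G_\infty]]$, so its Fitting ideal there is a well--defined principal ideal. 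Now: Fitting ideals commute with base change, so $\mathrm{Fit}_{\mathbb Z_\ell[[\overline G]]}(T_\ell(M_{\overline S,\overline\Sigma}))=\mathbb Z_\ell[[\overline G]]\cdot\mathrm{Fit}_{\mathbb Z_\ell[[G_\infty]]}(T_\ell(M_{S_\infty,\Sigma_\infty}))$; by Theorem~\ref{Fitting}(2) the left--hand side is $(\Theta_{S,\Sigma}(\gamma^{-1}))$; and $\Theta_{S,\Sigma}(\gamma^{-1})\in\mathbb Z_\ell[[G_\infty]]$ by the discussion preceding the corollary. Writing $A:=\mathbb Z_\ell[[G_\infty]]\subseteq B:=\mathbb Z_\ell[[\overline G]]$, this says $IB=JB$ for the $A$--ideals $I:=\mathrm{Fit}_A(T_\ell(M_{S_\infty,\Sigma_\infty}))$ and $J:=(\Theta_{S,\Sigma}(\gamma^{-1}))A$; faithful flatness of $B$ over $A$ gives $I=IB\cap A=JB\cap A=J$, which is precisely the asserted equality in $\mathbb Z_\ell[[G_\infty]]$.

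\medskip
\noindent The main obstacle I anticipate is the first step of part~(2): one must check carefully that the isomorphism coming from the proof of Theorem~\ref{main-ct} respects the full $\mathbb Z_\ell[[\overline G]]$--structure (the entire $\Gamma$--action) and not merely the $G$--action. This hinges on identifying $G_\infty$ with the decomposition group in $\overline G$ of a connected component of $\overline X$, and on observing that the section $\iota$ may be chosen $G_\infty$--equivariantly because it realizes ${}_c\mathcal K$ as a $G_\infty$--stable direct summand; everything afterwards (commutation of Fitting ideals with base change, and the faithfully flat descent $IB\cap A=I$) is formal.
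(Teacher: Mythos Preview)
Your proof is correct and follows essentially the same route as the paper: establish the base--change isomorphism $T_\ell(M_{\overline S,\overline\Sigma})\simeq T_\ell(M_{S_\infty,\Sigma_\infty})\otimes_{\mathbb Z_\ell[[G_\infty]]}\mathbb Z_\ell[[\overline G]]$, use that $\mathbb Z_\ell[[\overline G]]$ is free of rank $[G:H]$ (hence faithfully flat) over $\mathbb Z_\ell[[G_\infty]]$, invoke Theorem~\ref{Fitting}(2) and base--change of Fitting ideals, and descend via $IB\cap A=I$. Two minor differences worth noting: for part~(1) you appeal directly to Theorem~\ref{connected} for the connected curve $X_\infty$, whereas the paper instead deduces it from the base--change isomorphism together with Theorem~\ref{Fitting}(1); and you are more explicit than the paper about why the isomorphism from the proof of Theorem~\ref{main-ct} is $\mathbb Z_\ell[[\overline G]]$--linear and not merely $\mathbb Z_\ell[G]$--linear (the paper simply asserts it ``can be re-written'' as an $\overline R$--module isomorphism), which is indeed the only point requiring care.
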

\begin{proof}
The proof of Theorem \ref{main-ct} gives an isomorphism of $\mathbb Z_{\ell}[G]$--modules
$$T_{\ell}(M_{\overline S, \overline \Sigma})\simeq T_{\ell}(M_{S_{\infty}, \Sigma_{\infty}})\otimes_{\mathbb Z_{\ell}[H]}\mathbb Z_{\ell}[G]\,,$$
for all primes $\ell$. This isomorphism combined with Theorem \ref{Fitting}(1) implies part (1) of the Corollary.
In order to prove part (2), let $R_\infty:=\mathbb Z_{\ell}[[G_\infty]]$ and $\overline R:=R[[\overline G]]$. Since $\overline G/G_{\infty}\simeq G/H$ (see above),
$\overline R$ is a free $R_{\infty}$--module of rank $|G/H|$. Consequently, $\overline R$ is a faithfully flat $R_{\infty}$--algebra. The isomorphism above can be
re-written as an isomorphism of $\overline R$--modules
$$T_{\ell}(M_{\overline S, \overline \Sigma})\simeq T_{\ell}(M_{S_{\infty}, \Sigma_{\infty}})\otimes_{R_{\infty}}\overline R\,.$$
Consequently, since Fitting ideals commute with extension of scalars, the isomorphism above combined with Theorem \ref{Fitting}(2) gives an equality of $\overline R$--ideals
$${\rm Fit}_{R_{\infty}}(T_{\ell}(M_{S_{\infty}, \Sigma_{\infty}}))\overline R = {\rm Fit}_{\overline R}(T_{\ell}(M_{\overline S, \overline\Sigma}))=\Theta_{S, \Sigma}(\gamma^{-1})\cdot\overline R$$
Now, recall that since $\overline R$ is a faithfully flat $R_{\infty}$--algebra, we have $I\overline R\cap R_{\infty}=I$, for all ideals $I$ in $R_{\infty}$ (see \cite{Matsumura}, Theorem 7.5(ii), p.49.)
If we apply this property to the $R_{\infty}$--ideals ${\rm Fit}_{R_{\infty}}(T_{\ell}(M_{S_{\infty}, \Sigma_{\infty}}))$ and $(\Theta_{S,\Sigma}(\gamma ^{-1}))$ and take into account the equality of
$\overline R$--ideals above, we obtain
$${\rm Fit}_{R_{\infty}}(T_{\ell}(M_{S_{\infty}, \Sigma_{\infty}}))=(\Theta_{S, \Sigma}(\gamma^{-1}))={\rm Fit}_{\overline R}(T_{\ell}(M_{\overline S, \overline\Sigma}))\cap R_{\infty},$$
which concludes the proof of part (2).
\end{proof}

\section{\bf Refinements of the Brumer-Stark and Coates-Sinnott conjectures for global fields of characteristic $p$.}
\noindent In this section, we prove that Theorem \ref{Fitting} (or, more precisely, Corollary \ref{Fitting-connected}) above implies refinements of the Brumer-Stark and Coates-Sinnott
conjectures, linking special values of equivariant Artin $L$--functions to certain Galois module structure invariants of
ideal--class groups and $\ell$--adic \'etale cohomology groups, in the context of abelian extensions of characteristic $p$ global fields. We are working with the notations and under the hypotheses
of the previous section.

\subsection{Generalized Jacobians, class--field theory and $\ell$--adic \'etale cohomology}
 If $w$ is a closed point on $X$, we denote by $\mathbb F_r(w)$ its residue field and let $\text{deg}(w):=[\mathbb F_r(w):\mathbb F_r]$ denote its degree over $\mathbb F_r$.
As customary, the degrees of divisors on $X$ are computed over the strict field of constants $\mathbb F_r$ of $K$. For every $n\in\mathbb Z_{\geq 1}$, we let $K_n$ denote the field compositum $K_n:=K\mathbb\cdot F_{r^n}$,
which is a characteristic $p$ global field of exact field of constants $\mathbb F_{r^n}$. We let $\Gamma_K=\text{Gal}(K_\infty/K)$. Obviously, we have an isomorphism
of topological groups $\Gamma_K\simeq\widehat{\mathbb Z}$. We denote by $\gamma_K$ the canonical topological generator of $\Gamma_K$, given by the $r$--power arithmetic Frobenius. We let $X_n:=X\times_{\mathbb F_r}\mathbb F_{r^n}$
denote the smooth, projective models of $K_n$ over $\mathbb F_{r^n}$, for all $n$.

\begin{definition} Let $T$ be a finite (possibly empty) set of closed points in $X$. For $\ast\in\{\infty\}\cup\mathbb N$, we let $T_\ast$ be the set of closed points on $X_\ast$ sitting above points in $T$.
We define the $T$--modified Picard groups associated to $K_\ast$
(or to $X_\ast$, for that matter)
$${\rm Pic}_{T}(K_\ast):=\frac{{\rm Div}(X_\ast\setminus T_\ast)}{\{{\rm div}(f)\,\mid\,f\in K_{\ast,{T}}^\times\}}\,,\qquad {\rm Pic}^0_{T}(K_\ast):=\frac{{\rm Div}^0(X_\ast\setminus T_\ast)}{\{{\rm div}(f)\,\mid\,f\in K_{\ast, {T}}^\times\}}\,,$$
where  $K_{\ast, {T}}^\times:=\{f\in K_\ast^\times\,\mid\, f\equiv 1\mod w, \text{\rm  for all } w\in T_\ast\}$, as in Definition \ref{st}. (Obviously, $K_1=K$ and $T_1=T$.)
\end{definition}
\noindent For $T=\emptyset$, one obtains the classical Picard groups and $\emptyset$ will be dropped from the notation. We have an obvious commutative diagram with exact rows and columns
\begin{equation}\label{Pic-diagram} \xymatrix{
& & 0\ar[d] & 0\ar[d] &\\
0\ar[r] & \dfrac{\oplus_{w\in T }\mathbb F_r(w)^\times}{\mathbb F_r^\times}\ar[r]\ar[d]^{=} & \text{Pic}^0_T(K)\ar[r]\ar[d] &\text{Pic}^0(K)\ar[r]\ar[d] &0\\
0\ar[r] & \dfrac{\oplus_{w\in T}\mathbb F_r(w)^\times}{\mathbb F_r^\times}\ar[r] & \text{Pic}_T(K)\ar[r]\ar[d]^{\text{deg}} &\text{Pic}(K)\ar[r]\ar[d]^{\text{deg}} &0\\
 & & \mathbb Z\ar[r]^{=}\ar[d] &\mathbb Z \ar[d] &\\
  & & 0 & 0 &}
\end{equation}
at the $K=K_1$ level and exact analogues at the $K_\ast$--levels, for all $\ast$ as above.
The surjectivity of the degree maps (``$\text{deg}$'') at the finite levels is a classical theorem of F. K. Schmidt. If $T\ne\emptyset$, we view $\mathbb F_r^\times$ as sitting inside $\oplus_{w\in T}\mathbb F_r(w)^\times$ diagonally.
The injective maps in the horizontal short exact sequences send the
class modulo $\mathbb F_r^\times$ of an element $(x_w)_w\in \oplus_{w\in T}\mathbb F_r(w)^\times$ to the class of $\text{div}(f)$ of a function $f\in K^\times$
satisfying $f\equiv x_w\mod w$, for all $w\in T$. The existence of $f$ is implied by the weak approximation theorem.

\begin{remark}\label{generalized-jacobian} Note that, for any $T$ as above, we have
${\rm Pic}^0_{T}(K_\infty)=J_{T_\infty}(\mathbb F)\,,$
where $J_{T_\infty}$ is the semi-abelian variety (generalized Jacobian)
associated to the set of data $(X_\infty, T_\infty)$ over $\mathbb F$
(see \S2.)
\end{remark}

\begin{remark}\label{injective} For all $T$ as above and all natural numbers $m, n$ with $n\mid m$,  the canonical maps
$${\rm Pic}^0_T(K_n)\longrightarrow {\rm Pic}^0_T(K_m)\text{  and  } {\rm Pic}^0_T(K_n)\longrightarrow {\rm Pic}^0_T(K_\infty)$$
are injective. Indeed, this follows immediately from the equalities $H^1(\Gamma_{m,n}, \mathbb F_{r^m}^\times)=H^1(\Gamma_n, \mathbb F^\times)=1$, where
$\Gamma_n:={\rm Gal}(K_\infty/K_n)$ and $\Gamma_{m, n}:={\rm Gal}(K_m/K_n)$.
If we identify ${\rm Pic}^0_T(K_n)$ with a subgroup of ${\rm Pic}^0_T(K_\infty)$ under the above injective maps,
then we have equalities
\begin{equation}\label{invariants}
J_{T_\infty}(\mathbb F)^{\Gamma_n}={\rm Pic}^0_T(K_\infty)^{\Gamma_n}={\rm Pic}^0_T(K_n)\,,\qquad J_{T_\infty}(\mathbb F)=\bigcup\limits_n {\rm Pic}^0_T(K_n).
\end{equation}
Indeed, this is an immediate consequence of the fact that $K_\infty/K_n$ is everywhere unramified and
$H^1(\Gamma_n, K^\times_{\infty, T})=1$, for all $n$. (See Step 2 in the proof of Theorem \ref{G-invariants}.)
\end{remark}

\begin{remark}\label{class-field} Global class-field theory establishes a canonical injective (Artin reciprocity) morphism
$$\rho_{n, T}: {\rm Pic}_T(K_n)\longrightarrow {\mathfrak X}_{n, T}\,,$$
where ${\mathfrak X}_{n, T}$ is the Galois group of the maximal abelian extension $M_{n, T}$
of $K_n$ which is unramified outside of $T_n$ and at most tamely ramified at $T_n$. Moreover, the image of the above
morphism is dense in ${\mathfrak X}_{n, T}$ and it consists of all $\sigma\in {\mathfrak X}_{n, T}$ which via the Galois restriction map
${\mathfrak X}_{n, T}\longrightarrow {\rm Gal}(K_\infty/K_n)$ land in the subgroup $(\gamma_K^n)^{\mathbb Z}$ of ${\rm Gal}(K_\infty/K_n)=(\gamma_K^n)^{\widehat{\mathbb Z}}$.
Consequently, the morphism above induces an isomorphism at the level of profinite completions
$$\widehat{\rho_{n, T}}: \widehat{{\rm Pic}_T(K_n)}\overset\sim\longrightarrow {{\mathfrak X}_{n, T}}\,.$$
Note that there is a (non-canonical) group isomorphism $\widehat{{\rm Pic}_T(K_n)}\overset\sim\longrightarrow{{\rm Pic}^0_T(K_n)}\times\widehat{\mathbb Z}\,,$ coming
from a (non-canonical) splitting of the left-most vertical short exact sequence in the diagram (\ref{Pic-diagram}) above.
\end{remark}

\begin{remark}\label{class-field-infinity} Let ${\mathfrak X}_{\infty, T}$ denote the Galois group of the maximal abelian extension $M_{\infty, T}$ of $K_\infty$
which is unramified away from $T_\infty$ and at most tamely ramified at $T_{\infty}$. Since $K_\infty/K$ is unramified everywhere, it is easy to show that
$M_{\infty, T}=\cup_nM_{n, T}$ (union viewed inside a fixed separable closure of $K_\infty$.) Consequently, we have an isomorphism of topological groups
$$\underset n{\underset{\longleftarrow}\lim}{\mathfrak X}_{n, T}\overset\sim\longrightarrow {\mathfrak X}_{\infty, T}\,,$$
where the projective limit is taken with respect to the Galois restriction maps ${\rm res}_{m, n}:{\mathfrak X}_{m, T}\to {\mathfrak X}_{n, T}$,
for all $n, m$, such that $n\mid m$. Now, elementary properties of the Artin reciprocity map leads to a canonical isomorphism of topological groups
$$\underset n{\underset{\longleftarrow}\lim}\, \widehat{{\rm Pic}_T(K_n)}\overset\sim\longrightarrow {\mathfrak X}_{\infty, T}\,,$$
where the projective limit is taken with respect to the norm maps $N_{m, n}: {\rm Pic}_T(K_m)\longrightarrow {\rm Pic}_T(K_n)$, for all $m$, $n$ with $n\mid m$.
However, as the reader can easily check, there are commutative diagrams
$$\xymatrix
{0\ar[r] &{\rm Pic}^0_{T}(K_m)\ar[r]\ar[d]^{N^0_{m, n}} &\widehat{ {\rm Pic}_{T}(K_m)}\ar[r]^{\quad \widehat{\rm deg}} \ar[d]^{\widehat{N_{m, n}}} &\widehat{\mathbb Z}\ar[r]\ar[d]^{\times\frac{m}{n}} &0\\
0\ar[r] &{\rm Pic}^0_{T}(K_n)\ar[r] & \widehat{{\rm Pic}_{T}(K_n)}\ar[r]^{\quad \widehat{\rm deg}} &\widehat{\mathbb Z}\ar[r] &0}$$
for all $m$ and $n$ as above, where $N_{m, n}^0$ denotes the restriction of $N_{m, n}$ to ${\rm Pic}^0_{T}(K_m)$.  Since $\underset n{\underset{\longleftarrow}\lim}\,\,\widehat{\mathbb Z}=0$, we obtain canonical isomorphisms of topological groups
\begin{equation}\label{isomorphism-Galois}
\underset n{\underset{\longleftarrow}\lim}\,{\rm Pic}^0_T(K_n)\overset\sim\longrightarrow \underset n{\underset{\longleftarrow}\lim}\, \widehat{{\rm Pic}_T(K_n)}\overset\sim\longrightarrow {\mathfrak X}_{\infty, T}\,.\end{equation}
\end{remark}

\begin{lemma}
Let $M$ be a torsion, divisible $\mathbb Z_{\ell}$--module of finite co-rank, endowed with a
$\Gamma_K$--action which is continuous with respect to the discrete and profinite topology on $M$ and $\Gamma_K$, respectively.
Assume that $M^{\Gamma_n}$ is finite, for all $n$.  Then, the following hold.
\begin{enumerate}
\item There exist canonical isomorphisms $\phi_n: M^{\Gamma_n}\overset\sim\longrightarrow T_{\ell}(M)_{\Gamma_n}$ and commutative diagrams
$$\xymatrix{
M^{\Gamma_m}\ar[d]^{N_{m, n}}\ar[r]_{\sim\quad }^{\phi_m\quad } & T_{\ell}(M)_{\Gamma_m}\ar[d]\\
M^{\Gamma_n}\ar[r]^{\phi_n\quad }_{\sim\quad } & T_{\ell}(M)_{\Gamma_n}\,,
}$$
for all $n$ and $m$ with $n\mid m$, where $N_{m,n}$ is the usual norm (multiplication by $\frac{\gamma_K^m-1}{\gamma_K^n-1}$) map and the vertical maps on the right are
the natural projections.
\item There are natural isomorphisms of topological compact $\mathbb Z_{\ell}[[\Gamma_K]]$--modules
$$\underset n{\underset\longleftarrow\lim}\, M^{\Gamma_n}\overset\sim\longrightarrow \underset n{\underset\longleftarrow\lim}\, T_{\ell}(M)_{\Gamma_n}\overset\sim\longrightarrow T_\ell(M).$$
The first isomorphism above is the projective limit of the $\phi_n$'s and the second is the inverse of the projective limit of the canonical surjections $\pi_n: T_{\ell}(M)\twoheadrightarrow T_{\ell}(M)_{\Gamma_n}$.
\end{enumerate}
\end{lemma}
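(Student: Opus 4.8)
The plan is to deduce everything from the $\Lambda$-module structure of $L:=T_\ell(M)$, where $\Lambda:=\mathbb Z_\ell[[\Gamma_K]]$ and $\Gamma_K\cong\widehat{\mathbb Z}$. Since $M$ is divisible, torsion and of finite corank $d$, it is abstractly isomorphic to $(\mathbb Q_\ell/\mathbb Z_\ell)^d$, so $L$ is $\mathbb Z_\ell$-free of rank $d$; I write $A\in\mathrm{GL}_d(\mathbb Z_\ell)$ for the invertible matrix by which $\gamma_K$ acts on $L$, so that $\gamma_K^n$ acts by $A^n$. Throughout I use the canonical $\Gamma_K$-equivariant identification $M\cong(L\otimes_{\mathbb Z_\ell}\mathbb Q_\ell)/L$, under which $M^{\Gamma_n}=\{v\in L\otimes\mathbb Q_\ell:(A^n-I)v\in L\}/L$, while by definition $T_\ell(M)_{\Gamma_n}=L/(A^n-I)L$.

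For Part (1), fix $n$. Because $M^{\Gamma_n}$ is finite, $A^n-I$ can have no nonzero kernel on $L\otimes\mathbb Q_\ell$ (such a kernel would contribute an infinite subgroup to $M^{\Gamma_n}$), so $\det(A^n-I)\neq0$; hence $A^n-I$ is bijective on $L\otimes\mathbb Q_\ell$ and $L/(A^n-I)L$ is finite. Then multiplication by $A^n-I=\gamma_K^n-1$ descends to a map $M^{\Gamma_n}\to L/(A^n-I)L$ which is surjective ($A^n-I$ maps $L\otimes\mathbb Q_\ell$ onto itself) and injective ($A^n-I$ is injective there), giving the canonical, $\mathbb Z_\ell[\Gamma_K]$-linear isomorphism $\phi_n$. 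For the compatibility diagram with $n\mid m$: if $x\in M^{\Gamma_m}$ is the class of $v$, then $N_{m,n}(x)$ is the class of $\nu_{m,n}v$ with $\nu_{m,n}:=(\gamma_K^m-1)/(\gamma_K^n-1)=\sum_{i=0}^{m/n-1}\gamma_K^{ni}$, and since $(A^n-I)\nu_{m,n}=A^m-I$ both $\pi(\phi_m(x))$ and $\phi_n(N_{m,n}(x))$ equal the class of $(A^m-I)v$ in $L/(A^n-I)L$. This is the asserted square.

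For Part (2), the first isomorphism is just $\varprojlim_n\phi_n$, an isomorphism of compact $\Lambda$-modules since by Part (1) the $\phi_n$ are compatible isomorphisms of finite $\mathbb Z_\ell[\Gamma_K]$-modules. For the second, I would show that on $L$ the filtration by the submodules $(A^n-I)L$ is cofinal with the $\ell$-adic one. On one hand $(A^n-I)L$ has finite index $\ell^{c_n}$, $c_n:=v_\ell(\det(A^n-I))$, so $\ell^{c_n}L\subseteq(A^n-I)L$; on the other hand, continuity of the $\Gamma_K$-action on the finite module $M[\ell^k]\cong L/\ell^kL$ yields, for each $k$, an open subgroup $\Gamma_{n_k}$ acting trivially on $L/\ell^kL$, hence $(A^{n_k}-I)L\subseteq\ell^kL$. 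Therefore $L$, being $\ell$-adically separated and complete, is separated and complete for the $(A^n-I)$-filtration, i.e.\ $\varprojlim_n\pi_n\colon T_\ell(M)\to\varprojlim_nT_\ell(M)_{\Gamma_n}$ is an isomorphism of topological $\mathbb Z_\ell$-modules; it is $\Lambda$-linear since each $\pi_n$ is $\Gamma_K$-equivariant, and its inverse is the second isomorphism. Compactness of all four modules and continuity of all the maps are automatic.

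I expect the only real obstacle to be the cofinality/completeness claim in Part (2)---equivalently $\bigcap_n(\gamma_K^n-1)T_\ell(M)=0$ together with the coincidence of the $(\gamma_K^n-1)$-adic and $\ell$-adic topologies on $T_\ell(M)$. This is the sole point at which the hypothesis that $\Gamma_K$ act \emph{continuously} is used (to produce the $n_k$), and it is exactly what furnishes the isomorphism $T_\ell(M)\cong\varprojlim_nT_\ell(M)_{\Gamma_n}$ carrying the content of the lemma; the Tor-free computation of $\phi_n$ and of the norm-compatibility in Part (1) is routine bookkeeping.
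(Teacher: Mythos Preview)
Your proof is correct and follows essentially the same approach as the paper. Your construction of $\phi_n$ via the identification $M\cong (L\otimes\mathbb Q_\ell)/L$ and multiplication by $A^n-I$ is exactly the paper's map (which lifts $x\in M^{\Gamma_n}$ to a compatible $\ell$-divisible sequence $(x_r)$ and applies $\gamma_K^n-1$, noting this is a snake-lemma connecting morphism), and your cofinality argument in Part~(2) matches the paper's observation that continuity forces $M[\ell^m]\subseteq M^{\Gamma_n}$ for suitable $n$, whence $\bigcap_n(1-\gamma_K^n)T_\ell(M)\subseteq\bigcap_m\ell^m T_\ell(M)=0$.
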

\begin{proof} The proof is routine, except for two subtle points: the definition of the maps $\phi_n$ and the bijectivity of $\underset n{\underset\longleftarrow\lim}\, \pi_n$. We will explain these two
points, leaving the details to the reader.

$\bullet$ {\bf Constructing $\phi_n$.} Let us fix $n\in\mathbb N$ and $x\in M^{\Gamma_n}$. Since $M$ is divisible, there exists
$$(x_r)_{r\geq 1}\in \underset r{\underset\longleftarrow\lim} M,\text{  such that }x_1=x\,,$$
where the projective limit is taken with respect to the
multiplication by $\ell$ maps
$M\overset{\times\ell}\longrightarrow M$. Since $x\in
M^{\Gamma_n}$, if we set $y_r:=x_{r+1}^{\gamma_K^n-1}$, for all
$r\geq 1$,  we have $(y_r)_{r\geq 1}\in T_{\ell}(M)$. By
definition, the map $\phi_n$ sends $x$ to the class of
$(y_r)_{r\geq 1}$ in $T_\ell(M)_{\Gamma_n}$. One can easily check
that this definition is independent of any choices, $\phi_n$ is an
isomorphism and the diagram in (1) above is indeed commutative. In
fact, the maps $\phi_n$ arise naturally as connecting morphisms in
a snake lemma six-term exact sequence.
\medskip

$\bullet$ {\bf Proving the bijectivity of $\underset n{\underset\longleftarrow\lim}\, \pi_n$.} The bijectivity is obviously equivalent to the equality
$$\bigcap_n (1-\gamma_K^n)T_{\ell}(M)=0\,.$$
Since $M=\cup_n M^{\Gamma_n}$ ($\Gamma_K$ acts continuously on $M$ !) and $M$ has finite co-rank, for all $m$, there exists an $n$, such that
$M[\ell^m]\subseteq M^{\Gamma_n}$. This fact combined with the isomorphisms $\phi_n$ implies that
$$\bigcap_n (1-\gamma_K^n)T_{\ell}(M)\subseteq \bigcap_m \ell^m T_{\ell}(M)=0\,.$$
\end{proof}

\begin{corollary}\label{galois-semiabelian} For all primes $\ell$, we have canonical isomorphisms of $\mathbb Z_{\ell}[[\Gamma_K]]$--modules
$${\rm Pic}^0_{T}(K)^{(\ell)}\overset\sim\longrightarrow T_{\ell}(J_{T_\infty})_{\Gamma_K},\qquad  T_{\ell}(J_{T_\infty})\overset\sim\longrightarrow \mathfrak X_{\infty, T}^{(\ell)},$$
where ${\rm Pic}^0_{T}(K)^{(\ell)}:={\rm Pic}^0_{T}(K)\otimes\mathbb Z_{\ell}$ and $\mathfrak X_{\infty, T}^{(\ell)}:=\mathfrak X_{\infty, T}\otimes_{\mathbb Z}\mathbb Z_\ell$. ({\bf Note} that $\mathfrak X_{\infty, T}^{(\ell)}$ is the Galois group of the maximal abelian pro-$\ell$
extension of $K_\infty$ which is unramified away from $T_\infty$ if $\ell\ne p$ and unramified away from $T_\infty$ and at most tamely ramified at $T_\infty$, if $\ell=p$.)
\end{corollary}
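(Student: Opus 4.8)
The plan is to derive both isomorphisms from the Lemma immediately preceding the Corollary, applied to the $\Gamma_K$--module $M:=J_{T_\infty}[\ell^\infty]$, the $\ell$--primary torsion subgroup of $J_{T_\infty}(\mathbb F)={\rm Pic}^0_T(K_\infty)$ (the last identification is Remark \ref{generalized-jacobian}); note that $T_\ell(M)=T_\ell(J_{T_\infty})$ by definition of the Tate module. The first task is to verify the hypotheses of that Lemma. Since $J_{T_\infty}$ is a semi--abelian variety over the algebraically closed field $\mathbb F$, its group of points is divisible, so $M$ is a torsion, divisible $\mathbb Z_\ell$--module of finite co--rank (equal to $2g_X$ plus the torus rank if $\ell\ne p$, and to $\gamma_X$ if $\ell=p$, by Remarks \ref{genus} and \ref{p-torsion}), and the $\Gamma_K$--action on it is continuous for the discrete/profinite topologies by the functoriality discussed in Remark \ref{G-equiv}. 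The one genuinely arithmetic point is that $M^{\Gamma_n}$ is finite for every $n$: by the equalities (\ref{invariants}) of Remark \ref{injective} we have $J_{T_\infty}(\mathbb F)^{\Gamma_n}={\rm Pic}^0_T(K_n)$, hence $M^{\Gamma_n}={\rm Pic}^0_T(K_n)[\ell^\infty]={\rm Pic}^0_T(K_n)^{(\ell)}$, and ${\rm Pic}^0_T(K_n)$ is finite, being the degree--zero part of the finite group ${\rm Pic}_T(K_n)$ (use F. K. Schmidt's theorem and diagram (\ref{Pic-diagram}); equivalently, it is the group of $\mathbb F_{r^n}$--rational points of the semi--abelian variety $J_{T_n}$ over the finite field $\mathbb F_{r^n}$).

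For the first isomorphism I would simply take $n=1$ in part (1) of the Lemma. Since $K_1=K$ and $\Gamma_1=\Gamma_K$, this produces a canonical isomorphism $\phi_1\colon M^{\Gamma_K}\overset\sim\longrightarrow T_\ell(J_{T_\infty})_{\Gamma_K}$, and by the computation above $M^{\Gamma_K}={\rm Pic}^0_T(K)^{(\ell)}$. As $\Gamma_K$ acts trivially on both sides, this is in particular an isomorphism of $\mathbb Z_\ell[[\Gamma_K]]$--modules, which is exactly the first assertion.

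For the second isomorphism I would combine part (2) of the Lemma with Remark \ref{class-field-infinity}. Part (2) gives $\mathbb Z_\ell[[\Gamma_K]]$--linear isomorphisms $\varprojlim_n M^{\Gamma_n}\overset\sim\longrightarrow\varprojlim_n T_\ell(J_{T_\infty})_{\Gamma_n}\overset\sim\longrightarrow T_\ell(J_{T_\infty})$, the transition maps on the left being the norm maps. One then observes that, under the identification $M^{\Gamma_n}={\rm Pic}^0_T(K_n)^{(\ell)}$, these norm maps agree with the maps $N^0_{m,n}$ of Remark \ref{class-field-infinity} (both are corestriction, i.e. the norm of divisor classes); hence $\varprojlim_n M^{\Gamma_n}=\varprojlim_n{\rm Pic}^0_T(K_n)^{(\ell)}$. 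Now (\ref{isomorphism-Galois}) gives $\varprojlim_n{\rm Pic}^0_T(K_n)\overset\sim\longrightarrow\mathfrak X_{\infty,T}$, and since each ${\rm Pic}^0_T(K_n)$ is finite, passing to pro--$\ell$ parts commutes with this projective limit, whence $\varprojlim_n{\rm Pic}^0_T(K_n)^{(\ell)}\overset\sim\longrightarrow\mathfrak X_{\infty,T}^{(\ell)}$; this is $\Gamma_K$--equivariant because Artin reciprocity is. Composing and inverting yields the desired canonical $\mathbb Z_\ell[[\Gamma_K]]$--isomorphism $T_\ell(J_{T_\infty})\overset\sim\longrightarrow\mathfrak X_{\infty,T}^{(\ell)}$. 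Finally, for the parenthetical description: $\mathfrak X_{\infty,T}^{(\ell)}$ is the maximal pro--$\ell$ quotient of the profinite abelian group $\mathfrak X_{\infty,T}={\rm Gal}(M_{\infty,T}/K_\infty)$, i.e. the Galois group of the maximal pro--$\ell$ subextension of $M_{\infty,T}/K_\infty$; when $\ell\ne p$ every ramification index occurring there is a power of $\ell$, hence prime to $p={\rm char}$, so tameness at $T_\infty$ is automatic and the only surviving condition is being unramified outside $T_\infty$, while for $\ell=p$ the tame ramification allowed at $T_\infty$ is retained, giving the stated descriptions.

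I expect the only non--formal ingredient to be the finiteness of the groups ${\rm Pic}^0_T(K_n)$ (the class number finiteness input that makes the Lemma applicable), together with the somewhat bookkeeping--heavy matching of the three notions of transition map in play — the Lemma's algebraic norms $x\mapsto\tfrac{\gamma_K^m-1}{\gamma_K^n-1}\,x$, the class--field--theoretic norms $N^0_{m,n}$ of Remark \ref{class-field-infinity}, and the geometric corestrictions on $J_{T_\infty}$ — after which everything is a formal consequence of the preceding Lemma and Remarks.
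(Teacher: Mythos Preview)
Your proposal is correct and follows essentially the same approach as the paper: apply the preceding Lemma to $M:=J_{T_\infty}[\ell^\infty]$, verify its hypotheses via equalities (\ref{invariants}) and the finiteness of ${\rm Pic}^0_T(K_n)$, and then combine the Lemma with isomorphism (\ref{isomorphism-Galois}) to conclude. Your write-up is in fact more explicit than the paper's (which compresses everything into a single appeal to ``part (2) of the preceding Lemma''), in particular in singling out part (1) with $n=1$ for the first isomorphism and in checking that the various transition maps match.
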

\begin{proof} Let $M:=J_{T_\infty}[\ell^\infty]$. This is a torsion, divisible, $\mathbb Z_{\ell}$--module of finite co-rank. (Theorem \ref{ml-free} with $G$ trivial, $Z'=X_\infty$, $\mathcal S'=\emptyset$ and $\mathcal T'=T_\infty$
gives an exact formula for the co-rank.)
The continuity of the $\Gamma_K$--action on $M$ and the finiteness of $M^{\Gamma_n}$
are direct consequences of equalities (\ref{invariants}) and the finiteness of ${\rm Pic}^0_{T}(K_n)$, for all $n$. Now, apply part (2) of the preceding Lemma, combined
with equalities (\ref{invariants}) and isomorphism (\ref{isomorphism-Galois}) to conclude the proof of the corollary.
\end{proof}
\medskip

Next, we will use Corollary \ref{galois-semiabelian}  to express certain
$\ell$--adic \'etale cohomology groups associated to $K$ in terms
of the $\ell$--adic realizations $T_\ell(J_{T_\infty})$. Let $T$ be as above.
Assume that $T\ne\emptyset$. Further, recalling that $K$ is the top field of a Galois extension $K/k$ of
Galois group $G$, let us assume that $T$ is invariant under the $G$--action on primes in $K$.
In this case, if $\ell$ is an arbitrary prime, the modules
$T_{\ell}(J_{T_\infty})$, $\mathfrak X_{\infty,
T}^{(\ell)}$ and ${\rm Pic}^0_{T}(K)^{(\ell)}$ have
obvious natural $\mathbb Z_{\ell}[[G_\infty]]$--module structures
and the isomorphisms in Corollary \ref{galois-semiabelian}
preserve those structures.

Now, let $\ell$ be a prime number with $\ell\ne p$ and let
$n\in\mathbb Z_{\geq 0}$. As usual, for all $i\in\mathbb Z_{\geq 0}$, we
denote by $H^i_{et}(O_{K, T}, \mathbb Z_{\ell}(n))$ the $i$--th
\'etale cohomology group with the coefficients in the $\ell$--adic
sheaf $\mathbb Z_{\ell}(n)$ for the scheme ${\rm Spec}(O_{K,
T})$ associated to the subring of $T$--integers $O_{K,
T}$ in $K$. For the definition and main properties of these
$\ell$--adic \'etale cohomology groups, the reader can consult the
excellent survey \cite{Kolster}. Functoriality in \'etale
cohomology leads to natural $\mathbb Z_{\ell}[G]$--module structures
on $H^i_{et}(O_{K, T}, \mathbb Z_{\ell}(n))$, for all $i$ and
$n$ as above. In what follows, we denote by
$$\kappa_\ell: G_\infty \longrightarrow{\rm Aut}(\mu_{\ell^\infty})\overset\sim\longrightarrow \mathbb Z_{\ell}^\times$$
the $\ell$--adic cyclotomic character over $k$, restricted to
$G_\infty$. It is the continuous character which factors through the
$\ell$--adic character of $\Gamma:=\Gamma_k={\rm Gal}(k_\infty/k)$
which sends $\gamma:=\gamma_k$ to $q$. Note that under the canonical
identification of $G_\infty$ with a subgroup of $\overline
G:=G\times\Gamma$ made in \S4, the character $\kappa_\ell$ can be
extended to the continuous character $\tilde{\kappa}_\ell$ of
$\overline G$ which is trivial on $G$ and sends $\gamma$ to $q$.

\begin{definition}\label{Tate-twists} Let $R$ be a commutative $\mathbb Z_{\ell}$--algebra, $M$ an  $R[[G_\infty]]$--module and $n\in\mathbb Z$.
\begin{enumerate} \item The Tate twist
$M(n)$ is the $R[[G_\infty]]$--module $M$ with the twisted
$G_\infty$--action
$$\sigma\ast m=\kappa_\ell(\sigma)^n\cdot{}^\sigma m,$$
for all $\sigma\in G_\infty$ and $ m\in M$ and the original
$R$--action.
\item We let $t_n: R[[G_\infty]]\simeq R[[G_\infty]]$
    be the $R$--algebra isomorphism which sends
    $\sigma\in G_\infty$ to
$$t_n(\sigma):=\kappa_\ell(\sigma)^n\cdot\sigma\,.$$
\end{enumerate}\end{definition}

\begin{remark}\label{fitting-twisting} With notations as above, if $M$
is a finitely generated $R[[G_\infty]]$--module, then
$${\rm Fit}_{R[[G_\infty]]}(M(n))=t_{-n}({\rm
Fit}_{R[[G_\infty]]}(M))\,,$$ for all $n\in \mathbb Z$. (See \cite{Popescu}, Lemma 3.1.)
\end{remark}

\begin{definition} Let $M$ be a $\mathbb Z_{\ell}[[\mathcal G]]$--module, where $\mathcal G$ is a profinite, abelian group.
We let $$M^\ast:={\rm Hom}_{\mathbb Z_\ell}(M, \mathbb
    Z_{\ell}), \qquad M^\vee:={\rm Hom}_{\mathbb Z_\ell}(M, \mathbb
    Q_\ell/\mathbb Z_{\ell}),$$ viewed as $\mathbb Z_{\ell}[[\mathcal
    G]]$--modules endowed with either the co-variant or the
    contra-variant $\mathcal G$--actions, depending on the
    context. The covariant and contra-variant actions are defined by ${}^gf(m)=f(g\cdot
    m)$ and ${}^gf(m)=f(g^{-1}\cdot m)$, respectively, for all
    $g\in\mathcal G$, $m\in M$ and $f\in M^{\ast}$ or
    $f\in M^{\vee}$.
\end{definition}

\begin{lemma}\label{etale-semiabelian} With notations as above, we have isomorphisms of $\mathbb Z_{\ell}[G]$--modules
$$H_{et}^2(O_{K, T}, \mathbb Z_{\ell}(n))\overset\sim\longrightarrow T_{\ell}(J_{T_\infty})(-n)_{\Gamma_K}^{\,\,\vee},$$
for all $n\in\mathbb Z_{\geq 2}$, where the dual to the right is endowed with the contra-variant $G$--action.
\end{lemma}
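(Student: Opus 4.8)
The plan is to compute the group on the left by descent from the algebraically closed field of constants and then to recognize the result via Corollary \ref{galois-semiabelian}; throughout $\ell\neq p$. Put $Y:={\rm Spec}(O_{K,T})$ and $Y_\infty:={\rm Spec}(O_{K_\infty,T_\infty})$. Since $\mathbb F_r$ is the \emph{exact} field of constants of $K$, we have $Y_\infty=Y\times_{\mathbb F_r}\mathbb F$, a connected affine curve over $\mathbb F$, and $Y_\infty\to Y$ is the inverse limit of the finite \'etale covers $Y_n:={\rm Spec}(O_{K_n,T_n})\to Y$ (everywhere unramified, $K_n/K$ being a constant-field extension) with groups ${\rm Gal}(K_n/K)$. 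First I would invoke the ($G$-equivariant, for the residual action of $G_\infty/\Gamma_K=G$) Hochschild--Serre spectral sequence
$$H^p\big(\Gamma_K,\,H^q_{et}(Y_\infty,\mathbb Z_\ell(n))\big)\;\Longrightarrow\;H^{p+q}_{et}(Y,\mathbb Z_\ell(n)).$$
Since $Y_\infty$ is affine of dimension $1$ over an algebraically closed field, Artin's vanishing theorem gives ${\rm cd}_\ell(Y_\infty)\leq 1$, and ${\rm cd}(\Gamma_K)=1$ since $\Gamma_K\simeq\widehat{\mathbb Z}$; so the only contribution in total degree $2$ is $E_2^{1,1}$, and one obtains a canonical $\mathbb Z_\ell[G]$-isomorphism
$$H^2_{et}(O_{K,T},\mathbb Z_\ell(n))\;\overset\sim\longrightarrow\;H^1\big(\Gamma_K,\,H^1_{et}(Y_\infty,\mathbb Z_\ell(n))\big)\;=\;H^1_{et}(Y_\infty,\mathbb Z_\ell(n))_{\Gamma_K},$$
the last equality because $H^1(\widehat{\mathbb Z},-)$ is the coinvariants functor on finitely generated $\mathbb Z_\ell$-modules. (All cohomology groups at finite level are finite, so the $\varprojlim^1$-terms in continuous cohomology vanish.)

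Next I would compute $H^1_{et}(Y_\infty,\mathbb Z_\ell(n))$ by class field theory. As $Y_\infty$ is connected, $H^1_{et}(Y_\infty,\mathbb Z/\ell^m)={\rm Hom}\big(\pi_1^{\rm ab}(Y_\infty),\mathbb Z/\ell^m\big)={\rm Hom}\big(\mathfrak X_{\infty,T}^{(\ell)},\mathbb Z/\ell^m\big)$, since the maximal abelian pro-$\ell$ quotient of $\pi_1(Y_\infty)$ is $\mathfrak X_{\infty,T}^{(\ell)}$ (an $\ell$-power abelian extension of $K_\infty$ unramified away from $T_\infty$ is automatically tame at $T_\infty$ when $\ell\neq p$). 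The cyclotomic character is trivial on ${\rm Gal}(\overline K/K_\infty)$ because $\mathbb F\subseteq K_\infty$, so $H^1_{et}(Y_\infty,\mathbb Z/\ell^m(n))\cong{\rm Hom}\big(\mathfrak X_{\infty,T}^{(\ell)},\mu_{\ell^m}^{\otimes n}\big)$ as $\Gamma_K$-modules and $G$-modules; since $\mathfrak X_{\infty,T}^{(\ell)}$ is $\mathbb Z_\ell$-free of finite rank (Corollary \ref{galois-semiabelian}), passing to the limit over $m$ gives
$$H^1_{et}(Y_\infty,\mathbb Z_\ell(n))\;\cong\;{\rm Hom}_{\mathbb Z_\ell}\big(\mathfrak X_{\infty,T}^{(\ell)},\mathbb Z_\ell(n)\big)\;\cong\;{\rm Hom}_{\mathbb Z_\ell}\big(T_\ell(J_{T_\infty})(-n),\,\mathbb Z_\ell\big),$$
using the $\mathbb Z_\ell[[G_\infty]]$-isomorphism $T_\ell(J_{T_\infty})\overset\sim\to\mathfrak X_{\infty,T}^{(\ell)}$ of Corollary \ref{galois-semiabelian} together with the identity ${\rm Hom}_{\mathbb Z_\ell}(P,\mathbb Z_\ell)(n)={\rm Hom}_{\mathbb Z_\ell}(P(-n),\mathbb Z_\ell)$. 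Writing $N:=T_\ell(J_{T_\infty})(-n)$, we thus have $H^2_{et}(O_{K,T},\mathbb Z_\ell(n))\cong {\rm Hom}_{\mathbb Z_\ell}(N,\mathbb Z_\ell)_{\Gamma_K}$.

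Finally I would run the standard finite-duality argument, and this is where $n\geq 2$ enters. By the Weil bounds, the eigenvalues of the $r$-power Frobenius $\gamma_K$ on $T_\ell(J_{T_\infty})$ have complex absolute value $r^{1/2}$ on the abelian part and $r$ on the torus part $T_\ell(\tau_{T_\infty})$, while $\kappa_\ell(\gamma_K)=r$; hence the eigenvalues of $\gamma_K$ on $N$ have absolute value $r^{1/2-n}$ or $r^{1-n}$, none of which equals $1$ for $n\geq 2$. So $1-\gamma_K$ is injective on the free $\mathbb Z_\ell$-module $N$ of finite rank, with finite cokernel $N_{\Gamma_K}$. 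Applying ${\rm Hom}_{\mathbb Z_\ell}(-,\mathbb Z_\ell)$ to $0\to N\xrightarrow{\,1-\gamma_K\,}N\to N_{\Gamma_K}\to 0$ and using ${\rm Ext}^1_{\mathbb Z_\ell}(N,\mathbb Z_\ell)=0$ identifies ${\rm Hom}_{\mathbb Z_\ell}(N,\mathbb Z_\ell)_{\Gamma_K}$ with ${\rm Ext}^1_{\mathbb Z_\ell}(N_{\Gamma_K},\mathbb Z_\ell)$; and $0\to\mathbb Z_\ell\to\mathbb Q_\ell\to\mathbb Q_\ell/\mathbb Z_\ell\to 0$ (with $\mathbb Q_\ell$ injective) yields a natural isomorphism ${\rm Ext}^1_{\mathbb Z_\ell}(F,\mathbb Z_\ell)\cong F^\vee$ for every finite $\mathbb Z_\ell$-module $F$, the passage through ${\rm Hom}$ and ${\rm Ext}$ turning the covariant $G$-action on $N_{\Gamma_K}$ into the contravariant one on the dual. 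Composing the three isomorphisms produces the asserted $\mathbb Z_\ell[G]$-isomorphism $H^2_{et}(O_{K,T},\mathbb Z_\ell(n))\overset\sim\to T_\ell(J_{T_\infty})(-n)_{\Gamma_K}^{\vee}$, with the contravariant $G$-action on the right.

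The main obstacle I anticipate is the bookkeeping of the Tate twists and of the $G$-action through the class field theory step and the concluding ${\rm Ext}$-duality --- in particular verifying that the action obtained on the dual is precisely the contravariant one in the statement --- and checking that all intermediate modules are finite (or finitely generated), so that continuous cohomology behaves well and the ${\rm Ext}$-argument applies. This finiteness is exactly what the hypothesis $n\geq 2$ secures via the Weil estimates; it fails at $n=1$ because of the torus part $T_\ell(\tau_{T_\infty})$, which is why the statement is restricted to $n\geq 2$.
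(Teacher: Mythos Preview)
Your proof is correct and follows essentially the same route as the paper: Hochschild--Serre descent along $Y_\infty\to Y$ to reduce to coinvariants of $H^1$ at the top, identification of $H^1_{et}(Y_\infty,\mathbb Z_\ell(n))$ with the $\ell$-adic dual of $\mathfrak X_{\infty,T}^{(\ell)}$ twisted by $n$, and then Corollary \ref{galois-semiabelian}. The paper's own argument is a two-line citation to \cite{Kolster} for the spectral-sequence step, obtaining $H^2_{et}(O_{K,T},\mathbb Z_\ell(n))\cong \mathfrak X_{\infty,T}^{(\ell)}(-n)_{\Gamma_K}^{\,\vee}$ directly and then invoking Corollary \ref{galois-semiabelian}; you have simply unpacked that citation in full, and in addition you have folded into this proof the ${\rm Ext}$-duality step $({\rm Hom}_{\mathbb Z_\ell}(N,\mathbb Z_\ell))_{\Gamma_K}\cong (N_{\Gamma_K})^\vee$ which the paper isolates as a separate lemma (the one immediately preceding Corollary \ref{iso-duals}).
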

\begin{proof} The usual Hochschild-Serre spectral sequence argument in Galois cohomology, combined with the well-known finiteness of the groups
$H_{et}^i(O_{K, T}, \mathbb Z_{\ell}(n))$, for all $n\in\mathbb Z_{\geq 2}$ and all $i\in\mathbb Z_{\geq 0}$,  leads
to isomorphisms of $\mathbb Z_{\ell}[G]$--modules
$$H_{et}^2(O_{K, T}, \mathbb Z_{\ell}(n))\overset\sim\longrightarrow \mathfrak X_{\infty, T}^{(\ell)}(-n)_{\Gamma_K}^{\,\,\vee},$$
where the dual above is endowed with the contra-variant $G$--action (see the arguments in \cite{Kolster}, pp. 201--203 and pp. 237--238.)
The Galois group $\mathfrak X_{\infty, T}$ (and implicitly its $\ell$--primary component $\mathfrak X_{\infty, T}^{(\ell)}$) is endowed with the usual
lift-and-conjugation ${\rm Gal}(K_\infty/k)$--action. Now, we combine the isomorphism above with Corollary \ref{galois-semiabelian}
to arrive at the desired result. \end{proof}

\subsection{ The Brumer-Stark and Coates-Sinnott conjectures in characteristic $p$.} With notations as in \S4.2, we let $S_K$ and $\Sigma_K$
denote the sets consisting of all the primes in $K$ sitting above primes in $S$ and $\Sigma$, respectively.
However, for simplicity, the subscript $K$ will be dropped from the
notation whenever convenient (as in ${\rm Pic}^0_{\Sigma}(K)$ or $O_{K, S}$, for example.) Note that, by definition, both $S_K$ and $\Sigma_K$ are $G$--invariant.

\begin{conjecture}(Brumer-Stark) With notations as above, we have
$$\Theta_{S, \Sigma}(1)\in {\rm Ann}_{\mathbb Z[G]}({\rm Pic}^0_{\Sigma}(K))\,.$$
\end{conjecture}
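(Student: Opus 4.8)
The plan is to deduce the conjecture from the Iwasawa-theoretic Fitting-ideal computation of Corollary~\ref{Fitting-connected} by descending to the finite level $K$ via $\Gamma_K$-coinvariants and the class-field-theoretic isomorphism of Corollary~\ref{galois-semiabelian}. Fix a prime $\ell$. Corollary~\ref{Fitting-connected}(2) gives ${\rm Fit}_{\mathbb Z_\ell[[G_\infty]]}(T_\ell(M_{S_\infty,\Sigma_\infty}))=(\Theta_{S,\Sigma}(\gamma^{-1}))$; since a Fitting ideal is contained in the annihilator, $\Theta_{S,\Sigma}(\gamma^{-1})$ annihilates $T_\ell(M_{S_\infty,\Sigma_\infty})$ as a $\mathbb Z_\ell[[G_\infty]]$-module. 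First I would invoke the canonical $\mathbb Z_\ell[[G_\infty]]$-equivariant exact sequence
$$0\longrightarrow T_\ell(J_{\Sigma_\infty})\longrightarrow T_\ell(M_{S_\infty,\Sigma_\infty})\longrightarrow {\rm Div}^0(S_\infty)\otimes\mathbb Z_\ell\longrightarrow 0$$
(recall that $S_K$ and $\Sigma_K$, hence $S_\infty$ and $\Sigma_\infty$, are $G$-invariant), which presents $T_\ell(J_{\Sigma_\infty})$ as a $\mathbb Z_\ell[[G_\infty]]$-submodule of $T_\ell(M_{S_\infty,\Sigma_\infty})$, so that $\Theta_{S,\Sigma}(\gamma^{-1})$ also annihilates $T_\ell(J_{\Sigma_\infty})$ and hence its quotient $T_\ell(J_{\Sigma_\infty})_{\Gamma_K}$.

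The crux is then to identify the image of $\Theta_{S,\Sigma}(\gamma^{-1})$ under the natural projection $q\colon\mathbb Z_\ell[[G_\infty]]\twoheadrightarrow\mathbb Z_\ell[G_\infty/\Gamma_K]=\mathbb Z_\ell[G]$ induced by restriction ${\rm Gal}(K_\infty/k)\to{\rm Gal}(K/k)$ (using $G_\infty/\Gamma_K={\rm Gal}(K/k)=G$). I would argue as follows: let $p\colon\mathbb Z_\ell[[\overline G]]=\mathbb Z_\ell[G][[\Gamma]]\twoheadrightarrow\mathbb Z_\ell[G]$ be the augmentation in the $\Gamma$-variable ($\gamma\mapsto 1$); since $\Theta_{S,\Sigma}(u)\in\mathbb Z[G][u]$ is a polynomial, $p(\Theta_{S,\Sigma}(\gamma^{-1}))=\Theta_{S,\Sigma}(1)$; on the other hand $\Theta_{S,\Sigma}(\gamma^{-1})$ already lies in the subring $\mathbb Z_\ell[[G_\infty]]\subseteq\mathbb Z_\ell[[\overline G]]$ (as noted after~\eqref{product-formula}, via $\pi_G(\sigma_v)=\overline\gamma^{\,d_v}=\pi_\Gamma(\gamma^{d_v})$), and on this subring $p$ agrees with $q$ because restriction to $K$ sends $(g,\sigma)\in G_\infty$ to $g$; hence $q(\Theta_{S,\Sigma}(\gamma^{-1}))=\Theta_{S,\Sigma}(1)$. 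Because the $\mathbb Z_\ell[[G_\infty]]$-action on $T_\ell(J_{\Sigma_\infty})_{\Gamma_K}$ factors through $q$, this yields that $\Theta_{S,\Sigma}(1)\in\mathbb Z[G]$ annihilates $T_\ell(J_{\Sigma_\infty})_{\Gamma_K}$. Invoking Corollary~\ref{galois-semiabelian} with the $G$-invariant set $T=\Sigma$ (so that the isomorphism there is $\mathbb Z_\ell[G]$-linear) gives $T_\ell(J_{\Sigma_\infty})_{\Gamma_K}\cong{\rm Pic}^0_\Sigma(K)\otimes\mathbb Z_\ell$, so $\Theta_{S,\Sigma}(1)$ annihilates ${\rm Pic}^0_\Sigma(K)\otimes\mathbb Z_\ell$ for every prime $\ell$.

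To finish I would note that the diagram~\eqref{Pic-diagram} exhibits ${\rm Pic}^0_\Sigma(K)$ as an extension of the finite group ${\rm Pic}^0(K)$ by the finite group $(\oplus_{w\in\Sigma}\mathbb F_r(w)^\times)/\mathbb F_r^\times$, hence ${\rm Pic}^0_\Sigma(K)$ is finite and equals the direct sum of its finitely many nonzero $\ell$-primary parts ${\rm Pic}^0_\Sigma(K)\otimes\mathbb Z_\ell$; since $\Theta_{S,\Sigma}(1)\in\mathbb Z[G]$ kills each of these, it kills ${\rm Pic}^0_\Sigma(K)$, which is exactly the assertion. Essentially all the arithmetic has already been packaged into Theorem~\ref{Fitting} and Corollary~\ref{Fitting-connected}, so the remaining argument is formal; the step I expect to demand the most care is the group-theoretic bookkeeping in the second paragraph---making sure that, under the identifications $G_\infty\subseteq\overline G$ and $G_\infty/\Gamma_K=G$, the specialization $q(\Theta_{S,\Sigma}(\gamma^{-1}))$ is genuinely $\Theta_{S,\Sigma}(1)$ and not a Tate-twisted variant of it.
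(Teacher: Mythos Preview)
Your argument is correct. It differs from the paper's route in a meaningful way, so a brief comparison is in order.

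The paper does not prove the Brumer--Stark conjecture directly; it proves the stronger Theorem~\ref{refined-Brumer-Stark}, namely $\Theta_{S,\Sigma}(1)\in{\rm Fit}_{\mathbb Z[G]}({\rm Pic}^0_\Sigma(K)^\vee)$, and then observes in the subsequent Remark that ${\rm Fit}\subseteq{\rm Ann}$ and ${\rm Ann}(M^\vee)={\rm Ann}(M)$. To keep Fitting ideals throughout, the paper must first dualize (Corollary~\ref{fitting-duals}) to get $\Theta_{S,\Sigma}(\gamma^{-1})\in{\rm Fit}(T_\ell(M_{S_\infty,\Sigma_\infty})^\ast)$, then use the \emph{surjection} $T_\ell(M_{S_\infty,\Sigma_\infty})^\ast\twoheadrightarrow T_\ell(J_{\Sigma_\infty})^\ast$ (Fitting ideals only pass forward along surjections), take $\Gamma_K$--coinvariants, and finally invoke Corollary~\ref{iso-duals}(1) to identify $(T_\ell(J_{\Sigma_\infty})^\ast)_{\Gamma_K}$ with $({\rm Pic}^0_\Sigma(K)^{(\ell)})^\vee$.

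You instead drop from Fitting ideals to annihilators at the very first step, which lets you work on the undualized side: annihilators pass to \emph{submodules}, so you use the inclusion $T_\ell(J_{\Sigma_\infty})\hookrightarrow T_\ell(M_{S_\infty,\Sigma_\infty})$, take $\Gamma_K$--coinvariants, and apply Corollary~\ref{galois-semiabelian} directly to land on ${\rm Pic}^0_\Sigma(K)^{(\ell)}$ without ever passing through $\mathbb Z_\ell$-- or Pontryagin duals. This is shorter and conceptually cleaner for the bare annihilator statement; the price is that you do not recover the paper's refined Fitting-ideal result. Your identification $q(\Theta_{S,\Sigma}(\gamma^{-1}))=\Theta_{S,\Sigma}(1)$ is exactly the computation $\pi(\Theta_{S,\Sigma}(\gamma^{-1}))=\Theta_{S,\Sigma}(1)$ carried out in the paper's proof of Theorem~\ref{refined-Brumer-Stark}, so that step is solid.
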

\medskip

\begin{remark} {\bf (a)} For a fixed $S$ and all $\Sigma$, so that $(K/k, S, \Sigma)$ satisfy the required properties,
the statement above is equivalent to the characteristic $p$ version
of the classical Brumer-Stark conjecture for the given $S$ (as
stated in \cite{Tate-Stark}, Chpt.~V). In order to see this, one has
to make the following crucial observation. Let us fix $S$. For every
$\Sigma$, satisfying the above properties, let
$$\delta_\Sigma(u):=\prod\limits_{v\in \Sigma}(1-\sigma_v^{-1}\cdot(qu)^{d_v})\,.$$
Then the set $\{\delta_{\Sigma}(1)\mid \Sigma\}$ generates the ideal
${\rm Ann}_{\mathbb Z[G]}(\mu_K)$, where $\mu_K$ denotes the group of
roots of unity in $K$. (This is the exact analogue in characteristic
$p$ of Lemma 1.1 in \cite{Tate-Stark}, Chpt.~V.)

{\bf (b)} The conjecture above was proved independently and with different methods by Deligne (see \cite{Tate-Stark}, Chpt. V)
and Hayes (see \cite{Hayes}). In the next section, we will prove a refined version of this conjecture involving Fitting
ideals rather than annihilators of ideal class-groups.
\end{remark}
\medskip

\begin{conjecture} (Coates-Sinnott) With notations as above, we have
$$\Theta_{S, \Sigma}(q^{n-1})\in {\rm Ann}_{\mathbb Z_{\ell}[G]}(H_{et}^2(O_{K, S}, \mathbb Z_{\ell}(n)))\,,$$
for all primes $\ell\ne p$ and all $n\in\mathbb Z_{\geq 2}.$
\end{conjecture}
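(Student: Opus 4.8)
The plan is to deduce this conjecture from the stronger statement that $\Theta_{S,\Sigma}(q^{n-1})$ lies in the Fitting ideal $\mathrm{Fit}_{\mathbb Z_\ell[G]}(H^2_{et}(O_{K,S},\mathbb Z_\ell(n)))$; the conjecture then follows at once from the inclusion $\mathrm{Fit}_R(M)\subseteq\mathrm{Ann}_R(M)$ of \S4.1. The whole strategy is to propagate the single identity of Corollary~\ref{Fitting-connected}, $\mathrm{Fit}_{\mathbb Z_\ell[[G_\infty]]}(T_\ell(M_{S_\infty,\Sigma_\infty}))=(\Theta_{S,\Sigma}(\gamma^{-1}))$, down to the finite layer $\mathbb Z_\ell[G]$, after re-expressing the target étale cohomology group through $\ell$-adic realizations. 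Fix $\ell\ne p$ and $n\ge 2$. By Lemma~\ref{etale-semiabelian} together with Corollary~\ref{galois-semiabelian}, $H^2_{et}(O_{K,S},\mathbb Z_\ell(n))$ is isomorphic, as a $\mathbb Z_\ell[G]$-module, to the Pontryagin dual of the $\Gamma_K$-coinvariants of the $(-n)$-twist of $T_\ell(J_{S_\infty})$. So it suffices to pin down an explicit generator of a Fitting ideal of $T_\ell(J_{S_\infty})$ over $\mathbb Z_\ell[[G_\infty]]$ and push it through the operations ``Tate twist by $-n$'' (Remark~\ref{fitting-twisting}), ``$\mathbb Z_\ell$-linear/Pontryagin dual'' (Corollary~\ref{fitting-duals}), and ``base change along $\mathbb Z_\ell[[G_\infty]]\twoheadrightarrow\mathbb Z_\ell[G]$'' (i.e.\ $\Gamma_K$-coinvariants, under which Fitting ideals are well behaved).

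The essential manoeuvre --- and the step I expect to be the main obstacle --- is that $J_{S_\infty}$ is \emph{not} the semiabelian part of the Picard $1$-motive in Corollary~\ref{Fitting-connected}: that motive is $M_{S_\infty,\Sigma_\infty}$, whose modulus set is $\Sigma_\infty$, not $S_\infty$, and one cannot simply interchange $\Sigma$ and $S$, since a Picard $1$-motive to which Theorem~\ref{main-ct} applies must have its divisor-support set contain the ramification locus of the cover, which is contained in $S$ (and hence cannot be disjoint from $S$, as it would have to be in order to serve $S$ as modulus). The remedy is to pass to the dual (Albanese) $1$-motive $M^{\mathrm{Alb}}=[\,\mathrm{Div}^0(\Sigma_\infty)\longrightarrow J_{S_\infty}(\mathbb F)\,]$ of $M_{S_\infty,\Sigma_\infty}$, which interchanges divisor-support and modulus: on $\ell$-adic realizations the Weil (Cartier) pairing gives a $\mathbb Z_\ell[[G_\infty]]$-equivariant isomorphism $T_\ell(M^{\mathrm{Alb}})\cong\mathrm{Hom}_{\mathbb Z_\ell}(T_\ell(M_{S_\infty,\Sigma_\infty}),\mathbb Z_\ell(1))$, while the structural exact sequence of a $1$-motive exhibits $T_\ell(J_{S_\infty})$ as a $\mathbb Z_\ell[[G_\infty]]$-submodule of $T_\ell(M^{\mathrm{Alb}})$ with $\mathbb Z_\ell$-free quotient $\mathrm{Div}^0(\Sigma_\infty)\otimes\mathbb Z_\ell$. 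Being a $\mathbb Z_\ell$-linear dual-and-twist of a $\mathbb Z_\ell[G]$-projective module, $T_\ell(M^{\mathrm{Alb}})$ is again $\mathbb Z_\ell[G]$-projective and hence $\mathbb Z_\ell[[G_\infty]]$-finitely presented, exactly as in the proof of Corollary~\ref{Fitting-connected}; so Corollary~\ref{fitting-duals} and Remark~\ref{fitting-twisting} compute $\mathrm{Fit}_{\mathbb Z_\ell[[G_\infty]]}(T_\ell(M^{\mathrm{Alb}})(-n))$ as a principal ideal generated by an explicit twist (and transpose $\sigma\mapsto\sigma^{-1}$) of $\Theta_{S,\Sigma}(\gamma^{-1})$.

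What remains is bookkeeping. The $(-n)$-twisted realization has $T_\ell(J_{S_\infty})(-n)$ as a $\mathbb Z_\ell[[G_\infty]]$-submodule with $\mathbb Z_\ell$-free quotient; passing to $\Gamma_K$-coinvariants (injective on the submodule, since for $n\ge 2$ that quotient has no $\Gamma_K$-invariants) and then to Pontryagin duals realizes $H^2_{et}(O_{K,S},\mathbb Z_\ell(n))$, via Lemma~\ref{etale-semiabelian}, as a quotient of the finite $\mathbb Z_\ell[G]$-module $\bigl(T_\ell(M^{\mathrm{Alb}})(-n)_{\Gamma_K}\bigr)^\vee$. Since Fitting ideals grow under quotients, commute with base change to $\mathbb Z_\ell[G]$, and satisfy $\mathrm{Fit}(N^\vee)=\mathrm{Fit}(N)^\#$ for finite $N$, the ideal $\mathrm{Fit}_{\mathbb Z_\ell[G]}(H^2_{et}(O_{K,S},\mathbb Z_\ell(n)))$ contains the image in $\mathbb Z_\ell[G]$ of the transpose of the generator found above. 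To identify that image one runs the same specialization that sends $\Theta_{S,\Sigma}(\gamma^{-1})$ to the Brumer-Stark value $\Theta_{S,\Sigma}(1)$ under $\Gamma_K$-coinvariants --- i.e.\ the product formula~(\ref{product-formula}) together with $\kappa_\ell(\mathrm{Frob}_v)=Nv=q^{d_v}$ for $v\notin S_{\mathrm{ram}}$ --- now fed the extra Tate twists coming from Lemma~\ref{etale-semiabelian} and from the $\mathbb Z_\ell(1)$ in the Weil pairing; these combine so that the image is precisely $\Theta_{S,\Sigma}(q^{n-1})$ (the untwisted case $n=1$ being the Brumer-Stark value, which is why $\Theta_{S,\Sigma}(1)$ is the value relevant to the class group and $\Theta_{S,\Sigma}(q^{n-1})$ the one relevant to the $n$-th twist). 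This yields $\Theta_{S,\Sigma}(q^{n-1})\in\mathrm{Fit}_{\mathbb Z_\ell[G]}(H^2_{et}(O_{K,S},\mathbb Z_\ell(n)))\subseteq\mathrm{Ann}_{\mathbb Z_\ell[G]}(H^2_{et}(O_{K,S},\mathbb Z_\ell(n)))$ for all $\ell\ne p$ and $n\ge 2$, as claimed. The two genuinely delicate points are: matching the precise Galois-equivariance of the Weil pairing on the $1$-motive realizations with the covariant-action convention of Corollary~\ref{fitting-duals} (so that the two transpose involutions $\sigma\mapsto\sigma^{-1}$ entering through the Weil pairing and through Pontryagin duality cancel); and keeping exact track of the Tate-twist normalizations, so that the specialized $L$-value comes out at $q^{n-1}$ and not, say, at $q^{1-n}$.
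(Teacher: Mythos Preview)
Your proposal is correct and follows essentially the same route as the paper's proof of Theorem~\ref{refined-Coates-Sinnott}: use the generalized Weil pairing to pass from $T_\ell(M_{S_\infty,\Sigma_\infty})$ to its dual $1$--motive (your $M^{\mathrm{Alb}}$ is precisely the paper's $M_{\Sigma_\infty,S_\infty}$), project to the semiabelian part $T_\ell(J_{S_\infty})$, Tate-twist, take $\Gamma_K$--coinvariants, and identify the specialized value as $\Theta_{S,\Sigma}(q^{n-1})$.

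The only organizational difference is in where the dualization is performed. The paper applies the $\mathbb Z_\ell$--linear dual $(\,\cdot\,)^\ast$ (contra-variant) immediately after the Weil pairing, so that the structural exact sequence of $M_{\Sigma_\infty,S_\infty}$ becomes a \emph{surjection} onto $T_\ell(J_{S_\infty})(-n)^\ast$, and then invokes the pre-packaged identification $T_\ell(J_{S_\infty})(-n)^\ast_{\Gamma_K}\cong H^2_{et}$ of Corollary~\ref{iso-duals}(2) (which already absorbs the Pontryagin-dual step via the lemma $(M^\ast)_{\Gamma_K}\cong(M_{\Gamma_K})^\vee$). You instead keep $T_\ell(J_{S_\infty})(-n)$ as a \emph{submodule}, take $\Gamma_K$--coinvariants first (checking injectivity via vanishing of $\Gamma_K$--invariants on the quotient), and only then Pontryagin-dualize to obtain a quotient. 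Both paths are valid; the paper's ordering has the advantage that the transpose involutions you flag as delicate never appear explicitly, since the contra-variant $\mathbb Z_\ell$--dual and the twist are handled in one stroke by the isomorphism $T_\ell(M_{S_\infty,\Sigma_\infty})(n-1)\cong T_\ell(M_{\Sigma_\infty,S_\infty})(-n)^\ast$, and no separate $\mathrm{Fit}(N^\vee)=\mathrm{Fit}(N)^\#$ step is needed.
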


\begin{remark} For a fixed $S$ and all $\Sigma$, such that $(K/k, S, \Sigma)$ satisfy the above properties, the statement above is the exact characteristic $p$ analogue of
the classical Coates-Sinnott conjecture for that fixed $S$ (as stated in \cite{Coates-Sinnott}). In order to see this, one needs to make the following observation.
If we fix $S$ as above, then,
for all $n\in\mathbb Z_{\geq 2}$ and all primes $\ell\ne p$, the set $\{\delta_{\Sigma}(q^{n-1})\mid \Sigma\}$ generates
$${\rm Ann}_{\mathbb Z_{\ell}[G]}(H^1_{et}(O_{K, S}, \mathbb Z_{\ell}(n))_{\rm tors})$$ as a $\mathbb Z_{\ell}[G]$--ideal, where $\Sigma$ runs through all finite sets of primes in $k$, such that $S$ and $\Sigma$  satisfy the above properties.
Since $$H^1_{et}(O_{K, S}, \mathbb Z_{\ell}(n))_{\rm tors}=(\mathbb Q_{\ell}/\mathbb Z_{\ell}(n))^{\Gamma_K}$$ (see \cite{Kolster}, pp. 201-203), the statement above is the exact analogue in characteristic $p$
of Lemma 2.3 in \cite{Coates}. In order to compare the above observation to that in {\bf (a)} of the last remark, the reader is invited to note that $H^1_{et}(O_{K, S}, \mathbb Z_{\ell}(1))_{\rm tors}=
(\mathbb Q_{\ell}/\mathbb Z_{\ell}(1))^{\Gamma_K}=\mu_K^{(\ell)}$, for all $\ell\ne p$, while $\mu_K^{(p)}=1$.
\end{remark}

\subsection{ Refinements of the Brumer-Stark and Coates-Sinnott conjectures} Next, we prove the promised refinements of the two conjectures
stated in the previous section. The notations are the same as above.

\begin{lemma} Let $M$ be a $\mathbb Z_{\ell}[[G_\infty]]$--module, which is free of finite rank as a $\mathbb Z_{\ell}$--module and such that $M_{\Gamma_K}$ is
finite. Then, we have an isomorphism of $\mathbb Z_{\ell}[G]$--modules
$$(M^\ast)_{\Gamma_K}\overset\sim\longrightarrow (M_{\Gamma_K})^{\vee}\,,$$
where both dual modules are viewed with either the co-variant or
the contra-variant $G_\infty$--actions.
\end{lemma}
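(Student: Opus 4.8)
The plan is to realize both $(M^\ast)_{\Gamma_K}$ and $(M_{\Gamma_K})^\vee$ as the cokernel of the operator ``$\gamma_K-1$'' on a suitable $\mathbb Z_\ell$--free module, and to connect the two through the elementary duality ${\rm Ext}^1_{\mathbb Z_\ell}(N,\mathbb Z_\ell)\simeq N^\vee$ valid for any finite $\mathbb Z_\ell$--module $N$. Throughout, $\gamma_K$ denotes the canonical topological generator of $\Gamma_K\simeq\widehat{\mathbb Z}$, and we recall that $G_\infty/\Gamma_K\simeq G$.

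First I would check that $\gamma_K-1$ acts injectively on $M$: its cokernel is $M_{\Gamma_K}$, which is finite by hypothesis, and since $M$ is $\mathbb Z_\ell$--free of finite rank the endomorphism $(\gamma_K-1)\otimes_{\mathbb Z_\ell}\mathbb Q_\ell$ of $M\otimes_{\mathbb Z_\ell}\mathbb Q_\ell\supseteq M$ is surjective, hence bijective. Since $G_\infty$ is abelian, $\gamma_K$ is central in $\mathbb Z_\ell[[G_\infty]]$, so this gives a short exact sequence of $\mathbb Z_\ell[[G_\infty]]$--modules
$$0\longrightarrow M\overset{\gamma_K-1}\longrightarrow M\longrightarrow M_{\Gamma_K}\longrightarrow 0\,.$$
I would then apply ${\rm Hom}_{\mathbb Z_\ell}(-,\mathbb Z_\ell)$, giving the duals the contravariant $G_\infty$--action so that this becomes a functor into $\mathbb Z_\ell[[G_\infty]]$--modules. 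As $M$ is $\mathbb Z_\ell$--free, ${\rm Ext}^1_{\mathbb Z_\ell}(M,\mathbb Z_\ell)=0$; as $M_{\Gamma_K}$ is finite, ${\rm Hom}_{\mathbb Z_\ell}(M_{\Gamma_K},\mathbb Z_\ell)=0$. Hence the long exact sequence collapses to
$$M^\ast\overset{d}\longrightarrow M^\ast\longrightarrow {\rm Ext}^1_{\mathbb Z_\ell}(M_{\Gamma_K},\mathbb Z_\ell)\longrightarrow 0\,,$$
where $d$ is the transpose of $\gamma_K-1$. A direct computation with the contravariant structure shows that $d$ is multiplication by $\gamma_K^{-1}-1$, hence by $\gamma_K-1$ up to a unit of $\mathbb Z_\ell[[G_\infty]]$; therefore the cokernel of $d$ equals $M^\ast/(\gamma_K-1)M^\ast=(M^\ast)_{\Gamma_K}$, and we obtain an isomorphism $(M^\ast)_{\Gamma_K}\overset\sim\longrightarrow {\rm Ext}^1_{\mathbb Z_\ell}(M_{\Gamma_K},\mathbb Z_\ell)$ of $\mathbb Z_\ell[[G_\infty]]$--modules.

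To finish, I would invoke the exact sequence $0\to\mathbb Z_\ell\to\mathbb Q_\ell\to\mathbb Q_\ell/\mathbb Z_\ell\to 0$: since $\mathbb Q_\ell$ is an injective $\mathbb Z_\ell$--module and $M_{\Gamma_K}$ is finite torsion, one has ${\rm Hom}_{\mathbb Z_\ell}(M_{\Gamma_K},\mathbb Q_\ell)={\rm Ext}^1_{\mathbb Z_\ell}(M_{\Gamma_K},\mathbb Q_\ell)=0$, so the connecting homomorphism is a natural isomorphism ${\rm Ext}^1_{\mathbb Z_\ell}(M_{\Gamma_K},\mathbb Z_\ell)\overset\sim\longrightarrow (M_{\Gamma_K})^\vee$, which by naturality is $\mathbb Z_\ell[[G_\infty]]$--linear for the contravariant structures. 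Composing the two isomorphisms, and observing that $\Gamma_K$ acts trivially on either side (so both are in fact $\mathbb Z_\ell[G]$--modules and the map is $\mathbb Z_\ell[G]$--linear), settles the contravariant case; the covariant case then follows either by rerunning the same argument with covariant conventions throughout (in which case the transpose operator $d$ becomes multiplication by $\gamma_K-1$ on the nose) or by transporting the result through the ring automorphism of $\mathbb Z_\ell[[G_\infty]]$ induced by $g\mapsto g^{-1}$, available since $G_\infty$ is abelian. The only point demanding care is the consistent bookkeeping of covariant versus contravariant $G_\infty$--actions under dualization — concretely, pinning down the operator $d$ — but this is routine rather than a genuine obstacle.
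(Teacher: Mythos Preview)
Your proof is correct and follows essentially the same route as the paper's: derive the short exact sequence $0\to M\to M\to M_{\Gamma_K}\to 0$ from the finiteness of $M_{\Gamma_K}$, apply ${\rm Hom}_{\mathbb Z_\ell}(-,\mathbb Z_\ell)$ to identify $(M^\ast)_{\Gamma_K}$ with ${\rm Ext}^1_{\mathbb Z_\ell}(M_{\Gamma_K},\mathbb Z_\ell)$, and then use the sequence $0\to\mathbb Z_\ell\to\mathbb Q_\ell\to\mathbb Q_\ell/\mathbb Z_\ell\to 0$ to identify the latter with $(M_{\Gamma_K})^\vee$. The paper writes out the covariant case (where the transposed operator is literally $1-\gamma_K$) and leaves the other to the reader, while you do the contravariant case with explicit bookkeeping of the transpose; your extra remark that $\gamma_K^{-1}-1$ differs from $\gamma_K-1$ by a unit is exactly the point needed, and is a nice touch the paper omits. (One cosmetic slip: the connecting homomorphism in the long exact sequence for ${\rm Hom}(M_{\Gamma_K},-)$ actually runs $(M_{\Gamma_K})^\vee\to{\rm Ext}^1$, not the other way, but of course the isomorphism is all that matters.)
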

\begin{proof} We will prove the statement for the co-variant action and leave the remaining case to the reader.
Since $M$ is $\mathbb Z_{\ell}$--free of finite rank and $M_{\Gamma_K}$ is finite, we have $M^{\Gamma_K}=0$. Consequently, we have an exact sequence of $\mathbb Z_{\ell}[[G_\infty]]$--modules
$$\xymatrix{
0\ar[r] & M\ar[r]^{1-\gamma_K} &M\ar[r] & M_{\Gamma_K}\ar[r] &0.
}$$
Apply the functor ${\rm Hom}_{\mathbb Z_{\ell}}(\ast,\, \mathbb Z_{\ell})$ to the sequence above to obtain an exact sequence
$$\xymatrix{
0\ar[r] & M^\ast\ar[r]^{1-\gamma_K} &M^\ast\ar[r] & {\rm Ext}^1_{\mathbb Z_{\ell}}(M_{\Gamma_K}, \mathbb Z_{\ell})\ar[r] &0.
}$$
This shows that we have an isomorphism of $\mathbb Z_{\ell}[G]$--modules $(M^\ast)_{{\Gamma_K}}\simeq{\rm Ext}^1_{\mathbb Z_{\ell}}(M_{\Gamma_K}, \mathbb Z_{\ell})$. Now, one applies the functor
${\rm Hom}_{\mathbb Z_{\ell}}(M_{{\Gamma_K}},\, \ast)$ to the exact sequence
$$\xymatrix{0\ar[r] &\mathbb Z_{\ell}\ar[r] &\mathbb Q_{\ell}\ar[r] &\mathbb Q_{\ell}/\mathbb Z_{\ell}\ar[r] &0
}$$
to obtain an isomorphism of  $\mathbb Z_{\ell}[G]$--modules $(M_{{\Gamma_K}})^\vee\simeq {\rm Ext}^1_{\mathbb Z_{\ell}}(M_{\Gamma_K}, \mathbb Z_{\ell})$. Taking into account the
previous isomorphism, this concludes the proof.
\end{proof}
\begin{corollary}\label{iso-duals} The following hold.
\begin{enumerate}\item For all prime numbers $\ell$, we have an isomorphism of $\mathbb Z_{\ell}[G]$--modules
$$T_{\ell}(J_{\Sigma_\infty})^\ast_{\,\,\,{\Gamma_K}}\overset\sim\longrightarrow {{\rm Pic}^0_\Sigma(K)^{(\ell)}}^{\, \vee},$$
where the duals are $G_\infty$--co-variant.
\item For all prime numbers $\ell\ne p$ and all $n\in\mathbb Z_{\geq 2}$, we have an isomorphism of $\mathbb Z_{\ell}[G]$--modules
$$T_{\ell}(J_{S_\infty})(-n)^\ast_{\,\,\, {\Gamma_K}}\overset\sim\longrightarrow H^2_{et}(O_{K, S}, \mathbb Z_{\ell}(n)),$$
where the dual is $G_\infty$-contra-variant.
\end{enumerate}
\end{corollary}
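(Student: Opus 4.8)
The plan is to derive both isomorphisms by a single application of the Lemma immediately preceding the statement, choosing the module $M$ appropriately in each case and then identifying $M_{\Gamma_K}$ and $(M^\ast)_{\Gamma_K}$ with the objects already described. In both cases the input one needs is exactly that $M$ be $\mathbb Z_\ell$--free of finite rank with $M_{\Gamma_K}$ finite, and these will be verified from Corollary \ref{galois-semiabelian} and Lemma \ref{etale-semiabelian} respectively.

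For part (1) I would take $M:=T_\ell(J_{\Sigma_\infty})$, with its natural $\mathbb Z_\ell[[G_\infty]]$--module structure as in the discussion following Corollary \ref{galois-semiabelian}. Being a Tate module, $M$ is free of finite rank over $\mathbb Z_\ell$; and Corollary \ref{galois-semiabelian} gives a $\mathbb Z_\ell[[G_\infty]]$--isomorphism $M_{\Gamma_K}\overset\sim\longrightarrow{\rm Pic}^0_\Sigma(K)^{(\ell)}$. The group ${\rm Pic}^0_\Sigma(K)$ is finite: this is immediate from the top horizontal exact sequence of diagram (\ref{Pic-diagram}), since $\oplus_{w\in\Sigma}\mathbb F_r(w)^\times/\mathbb F_r^\times$ is finite and ${\rm Pic}^0(K)$ is the degree--zero Picard group of a curve over a finite field, hence finite. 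Thus the hypotheses of the preceding Lemma hold, and its co-variant form yields a $\mathbb Z_\ell[G]$--isomorphism $(M^\ast)_{\Gamma_K}\overset\sim\longrightarrow(M_{\Gamma_K})^\vee$; composing with the $\vee$--dual of the Corollary \ref{galois-semiabelian} isomorphism gives $T_\ell(J_{\Sigma_\infty})^\ast_{\Gamma_K}\overset\sim\longrightarrow\big({\rm Pic}^0_\Sigma(K)^{(\ell)}\big)^\vee$, which is (1).

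For part (2), with $\ell\ne p$ and $n\geq 2$, I would take $M:=T_\ell(J_{S_\infty})(-n)$. Tate twisting does not change the underlying $\mathbb Z_\ell$--module, so $M$ is again $\mathbb Z_\ell$--free of finite rank; and Lemma \ref{etale-semiabelian}, applied with the $G$--invariant set $S$, gives a $\mathbb Z_\ell[G]$--isomorphism $H^2_{et}(O_{K,S},\mathbb Z_\ell(n))\overset\sim\longrightarrow(M_{\Gamma_K})^\vee$ for the contra-variant $G$--action. Since the étale cohomology groups $H^2_{et}(O_{K,S},\mathbb Z_\ell(n))$ for $n\geq 2$ are finite — finiteness already invoked in the proof of Lemma \ref{etale-semiabelian} — and the Pontryagin dual of a finite module is finite, $M_{\Gamma_K}$ is finite. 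The preceding Lemma in its contra-variant form then gives $(M^\ast)_{\Gamma_K}\overset\sim\longrightarrow(M_{\Gamma_K})^\vee\overset\sim\longrightarrow H^2_{et}(O_{K,S},\mathbb Z_\ell(n))$, which is (2).

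Since the argument is a chain of already-established isomorphisms, there is no substantial obstacle; the only point requiring genuine care is bookkeeping. One must keep the co-variant versus contra-variant $G_\infty$--actions consistent along the chain — the preceding Lemma is stated in both variances, so one selects the co-variant one to match Corollary \ref{galois-semiabelian} in part (1) and the contra-variant one to match Lemma \ref{etale-semiabelian} in part (2) — and one must check throughout that the full $\mathbb Z_\ell[[G_\infty]]$--module structure, not merely the $\mathbb Z_\ell[\Gamma_K]$-- or $\mathbb Z_\ell[G]$--structure, is respected, which is guaranteed by the remark following Corollary \ref{galois-semiabelian}.
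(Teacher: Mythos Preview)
Your proof is correct and follows essentially the same route as the paper: apply the preceding Lemma to $M=T_\ell(J_{\Sigma_\infty})$ and $M=T_\ell(J_{S_\infty})(-n)$ respectively, then invoke Corollary~\ref{galois-semiabelian} and Lemma~\ref{etale-semiabelian} to identify the coinvariants. The one point of divergence is the verification that $M_{\Gamma_K}$ is finite: the paper argues uniformly via the Riemann Hypothesis for $X_\infty$ (the eigenvalues of $\gamma_K$ on $\mathbb Q_\ell\otimes M$ have archimedean absolute value $r$, $r^{1/2}$, $r^{1-n}$, or $r^{1/2-n}$, none equal to $1$), whereas you deduce finiteness a posteriori from the already-known finiteness of ${\rm Pic}^0_\Sigma(K)$ and $H^2_{et}(O_{K,S},\mathbb Z_\ell(n))$ through the identifications themselves --- your route is more economical here, though your phrasing ``the Pontryagin dual of a finite module is finite'' in part~(2) states the implication backwards; what you need (and what holds) is that a finitely generated $\mathbb Z_\ell$--module with finite Pontryagin dual is itself finite.
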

\begin{proof} The idea is to apply the Lemma above to $M:=T_{\ell}(J_{\Sigma_\infty})$ and $M:=T_{\ell}(J_{S_\infty})(-n)$.
 These are free $\mathbb Z_{\ell}$--modules of finite rank. We have to show that they also satisfy the finiteness of $M_{{\Gamma_K}}$ hypothesis. However, for any
$M$ as in the Lemma, $M_{{\Gamma_K}}$ is finite if and only if the action of $\gamma_K$ on the $\mathbb Q_{\ell}$--vector space $\mathbb Q_{\ell}\otimes_{\mathbb Z_{\ell}}M$ has no fixed points
(i.e. it does not have $1$ as an eigenvalue.) Since $J_{\Sigma_\infty}$ is an extension of the Jacobian $J_{X_\infty}$ by a torus, the Riemann Hypothesis for the smooth, projective curve $X_\infty$ over
$\mathbb F$
implies that the eigenvalues of the action of $\gamma_K$ on $\mathbb Q_{\ell}\otimes_{\mathbb Z_{\ell}}T_{\ell}(J_{S_\infty})$ are algebraic integers (independent of $\ell$, if $\ell\ne p$)
of absolute value $r$ (the torus contribution) or $r^{1/2}$ (the Jacobian contribution.) The same argument applied to $\mathbb Q_{\ell}\otimes_{\mathbb Z_{\ell}}T_{\ell}(J_{S_\infty})(-n)$
leads to eigenvalues which are algebraic numbers of absolute value either $r^{1-n}$ (torus) or $r^{1/2-n}$ (Jacobian). Since $n\geq 2$, none of these eigenvalues equals $1$,
so $M_{{\Gamma_K}}$ is finite in both cases.

(1) The Lemma above applied to $M:=T_{\ell}(J_{\Sigma_\infty})$, combined with Corollary \ref{galois-semiabelian} give the following isomorphisms of $\mathbb Z_{\ell}[G]$--modules.
$$T_{\ell}(J_{\Sigma_\infty})^\ast_{\,\,\,{\Gamma_K}}\simeq T_{\ell}(J_{\Sigma_\infty})_{{\Gamma_K}}^{\,\,\vee}\simeq{{\rm Pic}^0_\Sigma(K)^{(\ell)}}^{\, \vee}.$$
This concludes the proof of (1).

(2) The Lemma above applied to $M:=T_{\ell}(J_{\overline S})(-n)$, combined with Lemma \ref{etale-semiabelian} give the following isomorphisms of $\mathbb Z_{\ell}[G]$--modules.
$$T_{\ell}(J_{S_\infty})(-n)^\ast_{\,\,\, {\Gamma_K}}\simeq T_{\ell}(J_{S_\infty})(-n)_{{\Gamma_K}}^{\,\,\vee}\simeq H^2_{et}(O_{K, S}, \mathbb Z_{\ell}(n))\,.$$
This concludes the proof of part (2).
\end{proof}

\begin{theorem}\label{refined-Brumer-Stark} (``refined Brumer-Stark conjecture'') Let ${\rm Pic}^0_\Sigma(K)^{\,\vee}:= {\rm Hom}_{\mathbb Z}({\rm Pic}^0_\Sigma(K), \mathbb Q/\mathbb Z)$, endowed
with the co-variant $G$--action. Then, we have
$$\Theta_{S, \Sigma}(1)\in{\rm Fit}_{\mathbb Z[G]}({\rm Pic}^0_\Sigma(K)^{\, \vee})\,. $$
\end{theorem}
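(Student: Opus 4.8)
The plan is to descend the universal Fitting-ideal identity of Corollary \ref{Fitting-connected} from $\mathbb Z_\ell[[G_\infty]]$ to $\mathbb Z_\ell[G]$, to replace the $\ell$-adic realization $T_\ell(M_{S_\infty,\Sigma_\infty})$ along the way by the Pontrjagin dual of the $\Sigma$-modified degree-zero Picard group of $K$, and then to reassemble the resulting local containments into the stated containment over $\mathbb Z[G]$.

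First I would reduce to a prime-by-prime statement. Since ${\rm Pic}^0_\Sigma(K)$ is finite (it is an extension of the finite group ${\rm Pic}^0(K)$ by a finite group, cf. diagram (\ref{Pic-diagram})), the module $N:={\rm Pic}^0_\Sigma(K)^\vee$ is finite and ${\rm Fit}_{\mathbb Z[G]}(N)$ is a finitely generated ideal of the Noetherian ring $\mathbb Z[G]$. Because $\widehat{\mathbb Z}=\prod_\ell\mathbb Z_\ell$ is faithfully flat over $\mathbb Z$ and Fitting ideals commute with base change, an element $x\in\mathbb Z[G]$ lies in ${\rm Fit}_{\mathbb Z[G]}(N)$ as soon as its image in $\mathbb Z_\ell[G]$ lies in ${\rm Fit}_{\mathbb Z_\ell[G]}(N\otimes_\mathbb Z\mathbb Z_\ell)$ for every $\ell$; moreover $N\otimes_\mathbb Z\mathbb Z_\ell=\big({\rm Pic}^0_\Sigma(K)^{(\ell)}\big)^\vee$. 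Thus it suffices to prove, for each prime $\ell$, that $\Theta_{S,\Sigma}(1)$ lies in ${\rm Fit}_{\mathbb Z_\ell[G]}\big(({\rm Pic}^0_\Sigma(K)^{(\ell)})^\vee\big)$.

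Now fix $\ell$. By Corollary \ref{Fitting-connected}, $T_\ell(M_{S_\infty,\Sigma_\infty})$ is $\mathbb Z_\ell[H]$-projective of finite rank and ${\rm Fit}_{\mathbb Z_\ell[[G_\infty]]}(T_\ell(M_{S_\infty,\Sigma_\infty}))=(\Theta_{S,\Sigma}(\gamma^{-1}))$. Writing $\mathbb Z_\ell[[G_\infty]]\cong\mathbb Z_\ell[H][[\Gamma]]$ and applying Corollary \ref{fitting-duals}(2) (with $H$ in place of $G$), the same Fitting ideal computes $T_\ell(M_{S_\infty,\Sigma_\infty})^\ast$, with $\ast:={\rm Hom}_{\mathbb Z_\ell}(-,\mathbb Z_\ell)$ carrying the covariant $G_\infty$-action. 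I then base change along the augmentation $\mathbb Z_\ell[[G_\infty]]\twoheadrightarrow\mathbb Z_\ell[[G_\infty]]/(\gamma_K-1)=\mathbb Z_\ell[G]$, i.e.\ the quotient by the closed subgroup $\Gamma_K$. Using the product formula (\ref{product-formula}) and the identification of $G_\infty$ with $\{(g,\sigma)\in\overline G:\pi_G(g)=\pi_\Gamma(\sigma)\}$ recalled before Corollary \ref{Fitting-connected}, this augmentation carries the factor $1-\sigma_v^{-1}(q\gamma^{-1})^{d_v}$ to $1-\sigma_v^{-1}q^{d_v}$ and the factor $(1-\sigma_v^{-1}\gamma^{-d_v})^{-1}$ to $(1-\sigma_v^{-1})^{-1}$, so $\Theta_{S,\Sigma}(\gamma^{-1})\mapsto\Theta_{S,\Sigma}(1)$. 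Since Fitting ideals commute with base change and $M\otimes_{\mathbb Z_\ell[[G_\infty]]}\mathbb Z_\ell[G]=M_{\Gamma_K}$, this gives
$${\rm Fit}_{\mathbb Z_\ell[G]}\big((T_\ell(M_{S_\infty,\Sigma_\infty})^\ast)_{\Gamma_K}\big)=(\Theta_{S,\Sigma}(1)).$$
To finish, I dualize the defining exact sequence $0\to T_\ell(J_{\Sigma_\infty})\to T_\ell(M_{S_\infty,\Sigma_\infty})\to{\rm Div}^0(S_\infty)\otimes\mathbb Z_\ell\to 0$ of $\mathbb Z_\ell$-free $\mathbb Z_\ell[[G_\infty]]$-modules by ${\rm Hom}_{\mathbb Z_\ell}(-,\mathbb Z_\ell)$ (covariant $G_\infty$-action), obtaining a surjection $T_\ell(M_{S_\infty,\Sigma_\infty})^\ast\twoheadrightarrow T_\ell(J_{\Sigma_\infty})^\ast$; passing to $\Gamma_K$-coinvariants (right exact) and invoking Corollary \ref{iso-duals}(1) produces a $\mathbb Z_\ell[G]$-surjection $(T_\ell(M_{S_\infty,\Sigma_\infty})^\ast)_{\Gamma_K}\twoheadrightarrow\big({\rm Pic}^0_\Sigma(K)^{(\ell)}\big)^\vee$. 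Since a surjection $A\twoheadrightarrow B$ forces ${\rm Fit}(A)\subseteq{\rm Fit}(B)$, we conclude $(\Theta_{S,\Sigma}(1))\subseteq{\rm Fit}_{\mathbb Z_\ell[G]}\big(({\rm Pic}^0_\Sigma(K)^{(\ell)})^\vee\big)$, which is the local containment sought; reassembling over all $\ell$ finishes the proof.

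The hard part will be the bookkeeping in the descent step: pinning down that the augmentation killing $\Gamma_K$ really sends $\Theta_{S,\Sigma}(\gamma^{-1})$ to $\Theta_{S,\Sigma}(1)$ — this is the mirror of the inclusion $\Theta_{S,\Sigma}(\gamma^{-1})\in\mathbb Z_\ell[[G_\infty]]\subseteq\mathbb Z_\ell[[\overline G]]$ established just before Corollary \ref{Fitting-connected} — and keeping the covariant/contravariant conventions for the various $\mathbb Z_\ell$-duals mutually compatible across Corollaries \ref{fitting-duals} and \ref{iso-duals} and the dualized exact sequence of the $1$-motive, so that every map in sight is genuinely $\mathbb Z_\ell[G]$-linear.
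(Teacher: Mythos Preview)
Your proposal is correct and follows essentially the same route as the paper: reduce to each prime $\ell$, apply Corollary \ref{Fitting-connected} together with Corollary \ref{fitting-duals}(2) to pass to $T_\ell(M_{S_\infty,\Sigma_\infty})^\ast$, descend along $\mathbb Z_\ell[[G_\infty]]\twoheadrightarrow\mathbb Z_\ell[G]$, surject onto $T_\ell(J_{\Sigma_\infty})^\ast_{\ \Gamma_K}$, and finish with Corollary \ref{iso-duals}(1); the only difference is that you take $\Gamma_K$--coinvariants before surjecting whereas the paper surjects first, which is immaterial. One small cleanup: your factor-by-factor tracking of the Euler product through the augmentation is heuristic (the individual inverses $(1-\sigma_v^{-1})^{-1}$ need not exist in $\mathbb Z_\ell[G]$), and the paper's cleaner argument is simply that $\pi$ is the restriction to $G_\infty$ of the projection $\overline G=G\times\Gamma\to G$ sending $\gamma\mapsto 1$, so the \emph{polynomial} $\Theta_{S,\Sigma}(u)\in\mathbb Z[G][u]$ evaluated at $u=\gamma^{-1}$ visibly maps to $\Theta_{S,\Sigma}(1)$.
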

\begin{proof} Since $\Theta_{S, \Sigma}(1)\in\mathbb Z[G]$ (see \S 4.2), it suffices to prove that
$\Theta_{S, \Sigma}(1)\in{\rm Fit}_{\mathbb Z_{\ell}[G]}({{\rm Pic}^0_\Sigma(K)^{(\ell)}}^{\, \vee})$,
for all prime numbers $\ell$, where ${{\rm Pic}^0_\Sigma(K)^{(\ell)}}^{\, \vee}:={\rm Hom}_{\mathbb Z_\ell}({\rm Pic}^0_\Sigma(K)^{(\ell)}, \mathbb Q_\ell/\mathbb Z_\ell)$
with the co-variant $G$--action. Now, we bring the $1$--motive $M_{ S_\infty, \Sigma_\infty}$ into the game (see the paragraph leading to Corollary \ref{Fitting-connected}.)
Let us fix a prime number $\ell$.
We combine Corollary \ref{Fitting-connected} with
Corollary \ref{fitting-duals}(2) to get
\begin{equation}\label{?}
\Theta_{S, \Sigma}(\gamma^{-1})\in{\rm Fit}_{\mathbb Z_{\ell}[[G_\infty]]}(T_{\ell}(M_{S_\infty, \Sigma_\infty})^\ast)\,,\end{equation}
where the $\mathbb Z_{\ell}$--dual is endowed with the co-variant $G_\infty$--action. (Recall that $G_\infty\simeq \Gamma\times H$.) We have the obvious exact sequence
of co-variant $\mathbb Z_{\ell}$--duals in the category of $\mathbb Z_{\ell}[[G_\infty]]$--modules
$$\xymatrix{
0\ar[r] &({\rm Div}^0(S_\infty)\otimes\mathbb Z_{\ell})^\ast \ar[r] & T_{\ell}(M_{S_\infty, \Sigma_\infty})^\ast\ar[r] & T_{\ell}(J_{\Sigma_\infty})^\ast\ar[r] &0
}\,.$$
The behaviour of Fitting ideals with respect to surjections combined with (\ref{?}) above leads to
$$\Theta_{S, \Sigma}(\gamma^{-1})\in {\rm Fit}_{\mathbb Z_{\ell}[[G_\infty]]}(T_{\ell}(J_{\Sigma_\infty})^\ast).$$
Now, we extend scalars along the surjective ring morphism $\pi: \mathbb Z_{\ell}[[G_\infty]]\twoheadrightarrow\mathbb Z_{\ell}[G]$ and obtain
$$\pi(\Theta_{S, \Sigma}(\gamma^{-1}))\in {\rm Fit}_{\mathbb Z_{\ell}[G]}(T_{\ell}(J_{\Sigma_\infty})^\ast_{\,\,\,{\Gamma_K}})\,.$$
However, the ring morphism $\pi$ is induced by the composition of group morphisms $G_\infty\subseteq \overline G=G\times\Gamma\twoheadrightarrow G$,
where $G\time\Gamma\twoheadrightarrow G$ is the projection onto the second factor sending $\gamma$ to $1$. Consequently, we have
$\pi(\Theta_{S, \Sigma}(\gamma^{-1}))=\Theta_{S,\Sigma}(1).$ Now, we apply Corollary \ref{iso-duals}(1) to conclude the proof of the Theorem.
\end{proof}

\begin{remark} Observe that the theorem above is indeed a refinement of the Brumer-Stark Conjecture. Indeed,
since we are dealing with co-variant $\mathbb Z_{\ell}$--duals, we have
$${\rm Fit}_{\mathbb Z[G]}({\rm Pic}^0_\Sigma(K)^{\, \vee})\subseteq {\rm Ann}_{\mathbb Z[G]}({\rm Pic}^0_\Sigma(K)^{\, \vee})={\rm Ann}_{\mathbb Z[G]}({\rm Pic}^0_\Sigma(K))\,,$$
and the inclusion above is strict, in general.
\end{remark}
\medskip

\begin{theorem}\label{refined-Coates-Sinnott} (``refined Coates--Sinnott conjecture'') For all primes $\ell\ne p$ and all $n\in\mathbb Z_{\geq 2}$,
$$\Theta_{S, \Sigma}(q^{n-1})\in {\rm Fit}_{\mathbb Z_{\ell}[G]}(H_{et}^2(O_{K, S}, \mathbb Z_{\ell}(n))).$$
\end{theorem}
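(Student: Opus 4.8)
The plan is to run the argument of Theorem \ref{refined-Brumer-Stark} with two modifications: replace the Picard $1$--motive $M_{S_\infty,\Sigma_\infty}$ by its Cartier dual (the Albanese $1$--motive), and insert an $n$--fold Tate twist. Passing to the Cartier dual is forced by the fact that, via Corollary \ref{iso-duals}(2), the module governing $H^2_{et}(O_{K,S},\mathbb Z_\ell(n))$ is a twisted $\mathbb Z_\ell$--dual of $T_\ell(J_{S_\infty})$ — the generalized Jacobian \emph{with respect to $S_\infty$} — whereas the defining exact sequence of $T_\ell(M_{S_\infty,\Sigma_\infty})$ only ever produces $J_{\Sigma_\infty}$. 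The classical duality theory of $1$--motives (Deligne) supplies the bridge: the Cartier dual of $M_{S_\infty,\Sigma_\infty}=[{\rm Div}^0(S_\infty)\to J_{\Sigma_\infty}]$ is the (Albanese) $1$--motive $M_{\Sigma_\infty,S_\infty}=[{\rm Div}^0(\Sigma_\infty)\to J_{S_\infty}]$, and for $\ell\ne p$ one has a $\mathbb Z_\ell[[G_\infty]]$--isomorphism $T_\ell(M_{\Sigma_\infty,S_\infty})\simeq{\rm Hom}_{\mathbb Z_\ell}(T_\ell(M_{S_\infty,\Sigma_\infty}),\mathbb Z_\ell(1))$.

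Concretely, I would dualize the exact sequence $0\to T_\ell(J_{S_\infty})\to T_\ell(M_{\Sigma_\infty,S_\infty})\to{\rm Div}^0(\Sigma_\infty)\otimes\mathbb Z_\ell\to 0$ (all terms $\mathbb Z_\ell$--free) and feed in the duality isomorphism to obtain an exact sequence of $\mathbb Z_\ell[[G_\infty]]$--modules
$$0\longrightarrow({\rm Div}^0(\Sigma_\infty)\otimes\mathbb Z_\ell)^\ast\longrightarrow T_\ell(M_{S_\infty,\Sigma_\infty})(-1)\longrightarrow T_\ell(J_{S_\infty})^\ast\longrightarrow 0,$$
all $\mathbb Z_\ell$--duals carrying the contravariant $G_\infty$--action. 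Twisting by $(n)$ exhibits $T_\ell(J_{S_\infty})(-n)^\ast=T_\ell(J_{S_\infty})^\ast(n)$ as a quotient of $T_\ell(M_{S_\infty,\Sigma_\infty})(n-1)$, so by the behaviour of Fitting ideals under surjections it suffices to compute ${\rm Fit}_{\mathbb Z_\ell[[G_\infty]]}(T_\ell(M_{S_\infty,\Sigma_\infty})(n-1))$. By Remark \ref{fitting-twisting} and Corollary \ref{Fitting-connected}(2) this ideal equals $(t_{-(n-1)}(\Theta_{S,\Sigma}(\gamma^{-1})))$; and a direct computation, using $\kappa_\ell(\sigma_v\gamma^{d_v})=q^{d_v}$ together with the Frobenius compatibility $\pi_G(\sigma_v)=\pi_\Gamma(\gamma^{d_v})$ that already places $\Theta_{S,\Sigma}(\gamma^{-1})$ inside $\mathbb Z_\ell[[G_\infty]]$, shows $t_{-(n-1)}(\Theta_{S,\Sigma}(\gamma^{-1}))=\Theta_{S,\Sigma}(q^{n-1}\gamma^{-1})$, i.e.\ the polynomial $\Theta_{S,\Sigma}(u)\in\mathbb Z[G][u]$ evaluated at $u=q^{n-1}\gamma^{-1}$.

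Hence $\Theta_{S,\Sigma}(q^{n-1}\gamma^{-1})\in{\rm Fit}_{\mathbb Z_\ell[[G_\infty]]}(T_\ell(J_{S_\infty})(-n)^\ast)$. Base-changing along the surjection $\pi\colon\mathbb Z_\ell[[G_\infty]]\twoheadrightarrow\mathbb Z_\ell[G]$ (reduction modulo $\Gamma_K$, induced by $G_\infty\hookrightarrow G\times\Gamma\twoheadrightarrow G$, so that $\gamma\mapsto 1$ and $\sigma_v\gamma^{d_v}\mapsto\sigma_v$), and using that Fitting ideals commute with extension of scalars, one gets $\pi(\Theta_{S,\Sigma}(q^{n-1}\gamma^{-1}))=\Theta_{S,\Sigma}(q^{n-1})\in{\rm Fit}_{\mathbb Z_\ell[G]}((T_\ell(J_{S_\infty})(-n)^\ast)_{\Gamma_K})$; Corollary \ref{iso-duals}(2) then identifies $(T_\ell(J_{S_\infty})(-n)^\ast)_{\Gamma_K}$ with $H^2_{et}(O_{K,S},\mathbb Z_\ell(n))$, which finishes the proof. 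The only genuinely new ingredient beyond the Brumer--Stark argument — and the point requiring the most care — is forcing $J_{S_\infty}$ rather than $J_{\Sigma_\infty}$ to appear, which is exactly why the Cartier/Albanese dual $1$--motive has to enter; the remaining work is bookkeeping the two Tate twists (the $(-1)$ coming from Cartier duality and the $(n)$ needed to match the étale coefficients) and verifying the identity $\pi(\Theta_{S,\Sigma}(q^{n-1}\gamma^{-1}))=\Theta_{S,\Sigma}(q^{n-1})$.
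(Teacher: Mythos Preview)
Your proof is correct and follows essentially the same route as the paper: the perfect duality pairing $T_\ell(M_{S_\infty,\Sigma_\infty})\times T_\ell(M_{\Sigma_\infty,S_\infty})\to\mathbb Z_\ell(1)$ (which is precisely Cartier/Albanese duality of $1$--motives) is used to identify $T_\ell(M_{S_\infty,\Sigma_\infty})(n-1)$ with $T_\ell(M_{\Sigma_\infty,S_\infty})(-n)^\ast$, and then the dualized, twisted defining sequence of $M_{\Sigma_\infty,S_\infty}$ together with Corollary \ref{Fitting-connected}, Remark \ref{fitting-twisting}, the identity $t_{1-n}(\Theta_{S,\Sigma}(\gamma^{-1}))=\Theta_{S,\Sigma}(q^{n-1}\gamma^{-1})$, and Corollary \ref{iso-duals}(2) finish the argument. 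The only cosmetic difference is the order of operations (you dualize first and then twist by $(n)$, while the paper twists by $(-n)$ first and then dualizes), which yields the same exact sequence.
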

\begin{proof} Let us fix a prime $\ell\ne p$ and an integer $n\geq 2$. There is a perfect $\mathbb Z_{\ell}[[G_\infty]]$-linear pairing
$$T_{\ell}(M_{S_\infty, \Sigma_\infty})\times T_{\ell}(M_{\Sigma_\infty, S_\infty})\longrightarrow \mathbb Z_{\ell}(1),$$
generalizing the Weil pairing on $J_{X_\infty}$ (the case $S=\Sigma=\emptyset$) -- see \S10.2 in \cite{Deligne-HodgeIII} for the general theory, but also
see our upcoming work \cite{Greither-Popescu2} for an explicit construction (formula) of such a pairing. The pairing above induces an isomorphism of $\mathbb Z_{\ell}[[G_\infty]]$--modules
$$T_{\ell}(M_{S_\infty, \Sigma_\infty})\overset\sim\longrightarrow T_{\ell}(M_{\Sigma_\infty, S_\infty})^\ast(1)\,,$$
where the $\mathbb Z_{\ell}$--dual is endowed with the contra-variant $G_\infty$--action. Now, apply Corollary \ref{Fitting-connected}(2) combined with
Remark \ref{fitting-twisting} and the above isomorphism to conclude that
$$t_{1-n}(\Theta_{S, \Sigma}(\gamma^{-1}))\in {\rm Fit}_{\mathbb Z_{\ell}[[G_\infty]]}(T_{\ell}(M_{S_\infty, \Sigma_\infty})(n-1))={\rm Fit}_{\mathbb Z_{\ell}[[G_\infty]]}(T_{\ell}(M_{\Sigma_\infty, S_\infty})(-n)^\ast)\,.$$
Above, we used the obvious equality $T_{\ell}(M_{\Sigma_\infty, S_\infty})(-n)^\ast=T_{\ell}(M_{\Sigma_\infty, S_\infty})^\ast(n)$ of $G_\infty$--contra-variant $\mathbb Z_{\ell}$--duals.
Now, we have an obvious exact sequence of $\mathbb Z_{\ell}[[G_\infty]]$--modules.
$$\xymatrix{
0\ar[r] & ({\rm Div}^0(\Sigma_\infty)\otimes\mathbb Z_{\ell})(-n)^\ast\ar[r] & T_{\ell}(M_{\Sigma_\infty, S_\infty})(-n)^\ast\ar[r] & T_{\ell}(J_{S_\infty})(-n)^\ast \ar[r] & 0
}$$
Consequently, as in the proof of the previous theorem, we get the following:
$$t_{1-n}(\Theta_{S, \Sigma}(\gamma^{-1}))\in{\rm Fit}_{\mathbb Z_{\ell}[[G_\infty]]}(T_{\ell}(J_{S_\infty})(-n)^\ast),\ \
 \pi(t_{1-n}(\Theta_{S, \Sigma}(\gamma^{-1}))\in{\rm Fit}_{\mathbb Z_{\ell}[G]}(T_{\ell}(J_{S_\infty})(-n)^\ast_{\,\,\, {\Gamma_K}})\,.$$
As before, the second relation above is obtained from the first via extension of scalars along the natural surjective ring morphism $\pi: \mathbb Z_{\ell}[[G_\infty]]\twoheadrightarrow\mathbb Z_{\ell}[G]$ induced by the composition of group morphisms
$G_\infty\subseteq G\times\Gamma\twoheadrightarrow G$. It is easy to see that
$$\pi(t_{1-n}(\Theta_{S, \Sigma}(\gamma^{-1})))=\pi(\Theta_{S, \Sigma}(q^{n-1}\cdot\gamma^{-1}))=\Theta_{S, \Sigma}(q^{n-1})$$
(see comments leading to Definition \ref{Tate-twists}.)
Now, one combines this equality with the second relation above and the isomorphism of
Corollary \ref{iso-duals}(2) to conclude the proof of the Theorem.
\end{proof}

\section{\bf Cartier operators, logarithmic differentials and $p$--torsion of Picard $1$--motives}

\noindent The main goal of this final section is to show how our
Theorem \ref{ml-free} implies the main result of Nakajima's paper
\cite{Nakajima} on Galois $p$--covers of smooth, connected,
projective curves defined over an algebraically closed field of
characteristic $p$. In the process, we place Nakajima's result in
the general framework of Picard $1$--motives.

\subsection{\bf Nakajima's Theorem} In what follows, $Z$ will denote a connected, smooth, projective curve over
an algebraically closed field $\kappa$ of characteristic $p$. Let $\mathcal K:=\kappa(Z)$ be the field of $\kappa$--rational functions of $Z$. We denote by $\Omega_Z$ the
$\mathcal K$--module $\Omega_{\mathcal K/\kappa}$ of K\"ahler differentials (which can also be identified with the generic fiber of the sheaf of differentials on $Z$.)
$G$ will denote a finite group of  $\kappa$--automorphisms of $Z$. Obviously, $\Omega_Z$ inherits a $\kappa[G]$--module structure, with the canonical
$G$--action ${}^\sigma(x\cdot dy)={}^\sigma x\cdot d({}^\sigma y)$, for all $\sigma\in G$ and all $x, y\in\mathcal K$.
We let $\mathcal S$ denote a $G$--invariant finite set of closed points in $Z$ and let $\mathcal S':=\pi(\mathcal S)$ (the set of closed points on $Z'$ lying below those in $\mathcal S$.)
We take the effective divisor $[\mathcal S]=\sum_{P\in\mathcal S}P$ on $Z$ and let
$$\Omega_{Z}(-[\mathcal S]):=\left\{\omega\in\Omega_{Z}\mid \text{div}(\omega)\geq -[\mathcal S]\right \}$$
be the space of differentials on $Z$ whose divisors are greater than or equal to $-[\mathcal S]$.

\begin{definition} The Cartier operator
$\mathcal C: \Omega_{Z}\longrightarrow \Omega_{Z}$
is the unique $\mathbb F_p$--linear map, satisfying the following properties.
\begin{enumerate}
\item $\mathcal C(x^p\omega)=x\mathcal C(\omega),\, \forall\, x\in\mathcal K$ and $\forall\, \omega\in \Omega_{Z}$, i.e. $\mathcal C$ is $p^{-1}$ -- linear.
\item $\mathcal C(df)=0,\, \forall\, f\in\mathcal K\,.$
\item $\mathcal C(x^{p-1}dx)=dx,\, \forall\, x\in\mathcal K\,.$
\end{enumerate}
\end{definition}
\noindent The reader may consult \cite{Nakajima} and \cite{Subrao} for the existence and uniqueness of $\mathcal C$. The uniqueness of $\mathcal C$ implies right away that $\mathcal C$ is
an $\mathbb F_p[G]$--linear map. Also, since $\mathcal S$ is $G$--invariant, Lemma 2.1 in \cite{Subrao} implies that $\mathcal C$ induces an $\mathbb F_p[G]$--linear map
$$\mathcal C:\Omega_{Z}(-[\mathcal S])\longrightarrow \Omega_{Z}(-[\mathcal S])\,.$$

\begin{definition} Let $\Omega_{Z}(-[\mathcal S])^{\mathcal C=1}$ be the $\mathbb F_p$--vector subspace of $\Omega_{Z}(-[\mathcal S])$
consisting of all the differentials $\omega$ which are fixed by the Cartier operator $\mathcal C$.
Let $\Omega_{Z}(-[\mathcal S])^s$ be the $\kappa$--vector subspace of $\Omega_{Z}(-[\mathcal S])$
spanned by $\Omega_{Z}(-[\mathcal S])^{\mathcal C=1}\,.$ $\Omega_{Z}(-[\mathcal S])^s$ is called the semi-simple part of
$\Omega_{Z}(-[\mathcal S])$ with respect to the $p^{-1}$--linear operator $\mathcal C$.
\end{definition}

\begin{remark} Obviously, if $G$ is a group of automorphisms of $Z$ fixing $\mathcal S$, then $\Omega_{Z}(-[\mathcal S])^{\mathcal C=1}$ is an $\mathbb F_p[G]$--module and
$\Omega_{Z}(-[\mathcal S])^s$ is a $\kappa[G]$--module. Also,
it turns out that one has a direct sum decomposition in the category of $\kappa[G]$--modules
$$\Omega_{Z}(-[\mathcal S])=\Omega_{Z}(-[\mathcal S])^s\oplus \Omega_{Z}(-[\mathcal S])^n,$$
where $\Omega_{Z}(-[\mathcal S])^n$ is the nilpotent space associated to $\mathcal C$, consisting of all
differential forms $\omega\in \Omega_{Z}(-[\mathcal S])$ which are killed by a power of $\mathcal C$
(see Theorem 2.2 in \cite{Subrao}.)\end{remark}

\begin{theorem}\label{Nakajima} {\bf (Nakajima)} Assume that $\pi: Z\mapsto Z'$ is a $G$--Galois cover of connected, smooth, projective curves
over $\kappa$, where $G$ is a finite $p$--group. Assume that $\mathcal S$ is a finite, non--empty,
$G$--equivariant set of closed points on $Z$, containing
the ramification locus for $\pi$. Then $\Omega_Z(-[\mathcal S])^s$ is a free $\kappa[G]$--module whose rank is given by
$${\rm rank}_{\kappa[G]}\Omega_Z(-[\mathcal S])^s =(\gamma_{Z'}-1+|\mathcal S'|)\,.$$
\end{theorem}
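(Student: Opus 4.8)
The plan is to derive Theorem \ref{Nakajima} from Theorem \ref{ml-free} by identifying, as $\kappa[G]$--modules,
\[
\Omega_Z(-[\mathcal S])^s\ \cong\ \kappa\otimes_{\mathbb F_p}\mathcal M[p],
\]
where $\mathcal M:=\mathcal M_{\mathcal S,\mathcal T}$ is the Picard $1$--motive attached to $(Z,\kappa,\mathcal S,\mathcal T)$ for an auxiliary finite, non--empty, $G$--invariant set $\mathcal T$ of closed points disjoint from $\mathcal S$ (which exists since $Z$ has infinitely many closed points). By Remark \ref{p-torsion} the $p$--torsion $\mathcal M[p]$ is independent of $\mathcal T$ and equals $\mathcal M_{\mathcal S,\emptyset}[p]$, so by Proposition \ref{interpret} (applied with $n=p$, $\mathcal T=\emptyset$) and Remark \ref{G-equiv} there is an $\mathbb F_p[G]$--isomorphism $\mathcal M[p]\cong\mathcal K^{(p)}_{\mathcal S,\emptyset}/\mathcal K^{\times p}$.

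The central step will be to show that the logarithmic derivative $f\mapsto df/f$ induces a well--defined $\mathbb F_p[G]$--linear isomorphism
\[
d\log\colon\ \mathcal K^{(p)}_{\mathcal S,\emptyset}/\mathcal K^{\times p}\ \overset{\sim}{\longrightarrow}\ \Omega_Z(-[\mathcal S])^{\mathcal C=1}.
\]
Well--definedness and injectivity follow from the facts that, $\kappa$ being perfect and $\mathcal K/\kappa$ of transcendence degree $1$, the kernel of $d\log\colon\mathcal K^\times\to\Omega_Z$ is exactly $\mathcal K^{\times p}$, and that $\mathcal C(df/f)=df/f$ for every $f$ (use the defining properties of $\mathcal C$ after writing $df/f=(f^{-1})^p\cdot f^{p-1}df$). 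To see the image lies in $\Omega_Z(-[\mathcal S])$, one argues locally: if $\mathrm{div}(f)=pD+y$ with $y$ supported on $\mathcal S$, then at a closed point $P\notin\mathcal S$, writing $f=t^{pm}u$ with $t$ a uniformizer and $u$ a local unit, we get $df/f=pm\,dt/t+du/u=du/u$, which is regular at $P$ because $p=0$ kills the first term; at $P\in\mathcal S$ the form $df/f$ has at worst a simple pole. For surjectivity one invokes the classical description of the $\mathcal C$--fixed subspace of $\Omega_Z$ as the space $d\log(\mathcal K^\times)$ of logarithmic differentials (cf. \cite{Subrao}): given $\omega\in\Omega_Z(-[\mathcal S])^{\mathcal C=1}$, write $\omega=df/f$; regularity of $df/f$ at each $P\notin\mathcal S$ forces $\mathrm{ord}_P(f)\equiv0\pmod p$ (the residue of $df/f$ at $P$ equals $\mathrm{ord}_P(f)\bmod p$), hence $\mathrm{div}(f)=pD+y$ with $y$ supported on $\mathcal S$, i.e. $f\in\mathcal K^{(p)}_{\mathcal S,\emptyset}$. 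All maps in sight commute with the $G$--action, by the uniqueness of $\mathcal C$ and functoriality of $d\log$.

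It then remains to descend from $\mathbb F_p$--coefficients to $\kappa$--coefficients. Using the $\kappa[G]$--decomposition $\Omega_Z(-[\mathcal S])=\Omega_Z(-[\mathcal S])^s\oplus\Omega_Z(-[\mathcal S])^n$ into semisimple and nilpotent parts of the $p^{-1}$--linear operator $\mathcal C$, we have $\Omega_Z(-[\mathcal S])^{\mathcal C=1}=(\Omega_Z(-[\mathcal S])^s)^{\mathcal C=1}$, since $\mathcal C$ is nilpotent on the second summand. As $\kappa$ is algebraically closed and $\mathcal C$ is bijective on $\Omega_Z(-[\mathcal S])^s$, the standard structure theory of bijective $p^{-1}$--linear operators over an algebraically closed field (a form of Lang's theorem; cf. \cite{Subrao}, \cite{Nakajima}) produces a natural $\kappa[G]$--isomorphism $\kappa\otimes_{\mathbb F_p}\Omega_Z(-[\mathcal S])^{\mathcal C=1}\overset{\sim}{\to}\Omega_Z(-[\mathcal S])^s$. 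Combining this with the previous step gives the promised $\kappa[G]$--isomorphism $\Omega_Z(-[\mathcal S])^s\cong\kappa\otimes_{\mathbb F_p}\mathcal M[p]$; since Theorem \ref{ml-free} (in the case $\ell=p=\mathrm{char}(\kappa)$, with $G$ the given $p$--group) asserts $\mathcal M[p]$ is $\mathbb F_p[G]$--free of rank $r_{\mathcal M',p}=\gamma_{Z'}-1+|\mathcal S'|$, and $\kappa\otimes_{\mathbb F_p}\mathbb F_p[G]\cong\kappa[G]$, we conclude that $\Omega_Z(-[\mathcal S])^s$ is $\kappa[G]$--free of rank $\gamma_{Z'}-1+|\mathcal S'|$, as claimed. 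The main obstacle is the precise dictionary of the second paragraph: matching the divisibility condition $\mathrm{div}(f)=pD+y$ defining $\mathcal K^{(p)}_{\mathcal S,\emptyset}$ against the two conditions ($\mathcal C$--invariance and the pole bound $\mathrm{div}(\omega)\geq-[\mathcal S]$) defining $\Omega_Z(-[\mathcal S])^{\mathcal C=1}$ — above all the surjectivity, which rests on the classical identification of $\mathcal C$--fixed forms with logarithmic differentials and a careful local computation of orders and residues outside $\mathcal S$. The descent input of the third paragraph is standard but indispensable.
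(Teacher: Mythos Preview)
Your argument is correct and coincides with the route the paper itself takes: although the paper's formal proof of Theorem \ref{Nakajima} is only a citation to \cite{Nakajima}, the substance of your proposal is exactly what the paper develops immediately afterward in \S6.2 as Lemma \ref{Cartier} (the $d\log$ isomorphism $\mathcal K^{(p)}_{\mathcal S,\emptyset}/\mathcal K^{\times p}\cong\Omega_Z(-[\mathcal S])^{\mathcal C=1}$), Lemma \ref{tensor} (the descent $\kappa\otimes_{\mathbb F_p}\Omega_Z(-[\mathcal S])^{\mathcal C=1}\cong\Omega_Z(-[\mathcal S])^s$), and Proposition \ref{equivalence} (the equivalence with Theorem \ref{ml-free} at $\ell=p$). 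Your care in choosing an auxiliary non-empty $G$--invariant $\mathcal T$ so that Theorem \ref{ml-free} applies, and then invoking Remark \ref{p-torsion} to remove the dependence on $\mathcal T$, is exactly the right bookkeeping.
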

\begin{proof} See \cite{Nakajima}, Theorem 1, p. 561.\end{proof}

\subsection{Nakajima's theorem in the language of $p$--torsion of Picard $1$-motives} For the moment, let us assume that $Z\to Z'$ is a $G$--Galois cover of smooth, connected,  projective curves defined over $\kappa$
and that $\mathcal S$ is a $G$--invariant, finite, non-empty  set of closed points on $Z$, containing the ramification locus of the cover.
Next, we make a connection between
the $\kappa[G]$--module  $\Omega_Z(-[\mathcal S])^s$ and the $\mathbb F_p[G]$--module $\mathcal M_{\mathcal S, \emptyset}[p]$
of $p$--torsion points of the Picard $1$--motive $\mathcal M_{\mathcal S, \emptyset}$ associated to the data $(Z, \kappa, \mathcal S, \emptyset)$.
As a consequence, we show how Nakajima's Theorem \ref{Nakajima} is in fact equivalent to a particular case of our Theorem \ref{ml-free}.
\begin{lemma}\label{Cartier}  The following hold true.
\begin{enumerate}
\item Let $\omega\in \Omega_{Z}(-[\mathcal S])$. Then $\omega\in \Omega_{Z}(-[\mathcal S])^{\mathcal C=1}$ if and only if there exists $f\in\mathcal K^\times$, such that $\omega=\frac{df}{f}$.
\item Let $f\in\mathcal K^\times$. Then $\frac{df}{f}\in\Omega_{Z}(-[\mathcal S])$ if and only if $f\in \mathcal K_{\mathcal S, \emptyset}^{(p)}$.
\item There exists an isomorphism of $\mathbb F_p[G]$--modules
$$\mathcal K_{\mathcal S, \emptyset}^{(p)}/\mathcal K_{\emptyset}^{\times p} \simeq \Omega_{Z}(-[\mathcal S])^{\mathcal C=1}$$
given by $\widehat f \mapsto \frac{df}{f}\,,$ for all $f\in \mathcal K_{\mathcal S, \emptyset}^{(p)}$.
\item There exists an isomorphism of $\mathbb F_p[G]$--modules
$$\mathcal M_{\mathcal S, \mathcal T}[p]\simeq \Omega_{Z}(-[\mathcal S])^{\mathcal C=1}$$
for all $G$--invariant, finite (possibly empty) sets $\mathcal T$ of closed points on $Z$, disjoint from $\mathcal S$.
\end{enumerate}
\end{lemma}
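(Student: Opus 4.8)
The plan is to establish the four assertions in order, with (1) and (2) being essentially a classical fact and a local valuation computation, (3) a purely formal combination of them, and (4) an immediate consequence of Proposition~\ref{interpret} and Remark~\ref{p-torsion}. For (1), the easy inclusion $\{df/f : f\in\mathcal K^\times\}\subseteq\Omega_Z(-[\mathcal S])^{\mathcal C=1}$ (granted that $df/f$ already lies in $\Omega_Z(-[\mathcal S])$, which is part (2)) follows straight from the defining properties of the Cartier operator: writing $f^{p-1}\,df=f^p\cdot\frac{df}{f}$ and applying property (3) and then property (1) with $x=f$ gives $df=\mathcal C(f^{p-1}\,df)=\mathcal C\!\left(f^p\cdot\tfrac{df}{f}\right)=f\cdot\mathcal C\!\left(\tfrac{df}{f}\right)$, whence $\mathcal C(df/f)=df/f$. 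The reverse inclusion — every $\mathcal C$-fixed differential is logarithmic — is the classical theorem of Cartier on logarithmic differentials, which I would simply invoke, citing \cite{Nakajima} and \cite{Subrao} (or the standard references therein). This quoted input is the only non-formal ingredient and is where I expect the ``weight'' of the lemma to sit; everything else is bookkeeping.

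For (2), fix a closed point $P$ with local uniformizer $t$, put $m:=v_P(f)$ and write $f=ut^m$ with $u$ a local unit, so that $df/f=du/u+m\,(dt/t)$. Since $v_P(dt)=0$ and $v_P(du/u)\ge 0$, a short case distinction gives $v_P(df/f)=-1$ when $p\nmid m$ (so $m\ne 0$ in $\kappa$) and $v_P(df/f)\ge 0$ when $p\mid m$. Hence ${\rm div}(df/f)\ge -[\mathcal S]$ if and only if $p\mid v_P(f)$ for every $P\notin\mathcal S$, i.e.\ if and only if ${\rm div}(f)=pD+y$ with $y$ supported on $\mathcal S$; recalling that $\mathcal K_\emptyset^\times=\mathcal K^\times$, this is exactly the condition $f\in\mathcal K^{(p)}_{\mathcal S,\emptyset}$ of Definition~\ref{st}.

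For (3), the logarithmic derivative $f\mapsto df/f$ is a homomorphism from the multiplicative group $\mathcal K^\times$ to the additive group $\Omega_Z$, is $G$-equivariant for the action ${}^\sigma(x\,dy)={}^\sigma x\,d({}^\sigma y)$, and kills precisely $\ker(d\colon\mathcal K\to\Omega_Z)\cap\mathcal K^\times=\mathcal K^{\times p}$ (using that $\kappa$ is perfect and $\mathcal K$ is finitely generated over $\kappa$). By (2) it maps $\mathcal K^{(p)}_{\mathcal S,\emptyset}$ into $\Omega_Z(-[\mathcal S])$ and then, by the easy part of (1), into $\Omega_Z(-[\mathcal S])^{\mathcal C=1}$; conversely, the hard part of (1) together with (2) shows it maps $\mathcal K^{(p)}_{\mathcal S,\emptyset}$ \emph{onto} $\Omega_Z(-[\mathcal S])^{\mathcal C=1}$. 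Since $\mathcal K^{\times p}\subseteq\mathcal K^{(p)}_{\mathcal S,\emptyset}$, it descends to the claimed $\mathbb F_p[G]$-isomorphism $\mathcal K^{(p)}_{\mathcal S,\emptyset}/\mathcal K^{\times p}\xrightarrow{\ \sim\ }\Omega_Z(-[\mathcal S])^{\mathcal C=1}$, $\widehat f\mapsto df/f$ (both sides being $\mathbb F_p$-vector spaces).

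Finally, for (4): by Remark~\ref{p-torsion} one has $\mathcal M_{\mathcal S,\mathcal T}[p]=\mathcal M_{\mathcal S,\emptyset}[p]$ as $\mathbb F_p[G]$-modules (there is no $p$-torsion in $\tau_{\mathcal T}$ in characteristic $p$, and all the maps of \S2 are $G$-equivariant by Remark~\ref{G-equiv}); Proposition~\ref{interpret} with $n=p$ and $\mathcal T=\emptyset$, read $G$-equivariantly via Remark~\ref{G-equiv}, gives $\mathcal M_{\mathcal S,\emptyset}[p]\cong\mathcal K^{(p)}_{\mathcal S,\emptyset}/\mathcal K^{\times p}_\emptyset$; composing with the isomorphism of part (3) yields $\mathcal M_{\mathcal S,\mathcal T}[p]\cong\Omega_Z(-[\mathcal S])^{\mathcal C=1}$ for every $G$-invariant finite $\mathcal T$ disjoint from $\mathcal S$. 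No additional argument is needed for (4) beyond this chain of identifications.
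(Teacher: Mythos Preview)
Your proof is correct and follows essentially the same route as the paper's: cite the classical fact (from \cite{Subrao}) for (1), do a local uniformizer computation for (2), combine them formally for (3), and invoke Remark~\ref{p-torsion} plus Proposition~\ref{interpret} for (4). Your presentation of (2) via $f=ut^m$ is a cosmetic variant of the paper's power-series expansion, and you add the explicit verification of the easy direction of (1) from the axioms of $\mathcal C$, which the paper omits.
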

\begin{proof} For (1) see ``FACT'' in \cite{Subrao}, p. 4, for example. The proof of (2) can also be found in \cite{Subrao},
but since it is straightforward,
for the convenience of the reader we sketch it next. Let $f\in\mathcal K^\times$ and let $P$ be a closed point on $Z$. Let $t\in \mathcal K$ be a uniformizer
at $P$ and let $\mathcal K_P\simeq\kappa((t))$ be the completion of $\mathcal K$ at $P$. Assume that $\text{ord}_P(f)=n$ and
$f=a_nt^n + a_{n+1}t^{n+1} +\cdots$ in $\kappa((t))$, with $a_n, a_{n+1},\dots\in\kappa$ and $a_n\ne 0$. Then the image of $\omega:=\frac{df}{f}$
via the natural embedding $\Omega_Z\hookrightarrow\Omega_{\kappa((t))/\kappa}\simeq\kappa((t))dt$ is given by
$$\frac{df}{f}=(\frac{n}{t} + b_0 + b_1t +\cdots)dt,$$
with $b_0, b_1, \dots$ in $\kappa$. This shows that $\frac{df}{f}\in\Omega_{Z}(-[\mathcal S])$ if and only if $p\mid\text{ord}_P(f)$, for all $P\not\in\mathcal S$,
i.e. if and only if $f\in \mathcal K_{\mathcal S, \emptyset}^{(p)}$.

Now, by first noting that $\frac{d(fg)}{fg}=\frac{df}{f}+\frac{dg}{g}$, for all $f, g\in \mathcal K^\times$, one easily sees that (1) and (2) imply that we have a surjective $\mathbb Z[G]$--module
morphism  $\mathcal K_{\mathcal S, \emptyset}^{(p)}\twoheadrightarrow \Omega_{Z}(-[\mathcal S])^{\mathcal C=1}$, given by $f\rightarrow\frac{df}{f}\,.$ The kernel of this morphism
consists of all $f\in \mathcal K_{\mathcal S, \emptyset}^{(p)}$, such that $df=0$. This kernel is easily seen to equal $\mathcal K^{\times p}_{\emptyset}$, by picking a uniformizer
$t$ at an arbitrary closed point $P$ on $Z$ and (canonically) embedding $\Omega_Z\simeq\mathcal K dt$ into $\Omega_{\kappa((t))/\kappa}\simeq\kappa((t))dt$, as before.
This concludes the proof of (3).

In order to prove part (4), fix $\mathcal T$ as above. If one combines Remark \ref{p-torsion} and Proposition \ref{interpret} with part (3) of our Lemma, one obtains the desired isomorphism.
\end{proof}

\begin{lemma}\label{tensor} The canonical $\kappa$--linear map
$$\Omega_{Z}(-[\mathcal S])^{\mathcal C=1}\otimes_{\mathbb F_p}\kappa \longrightarrow \Omega_{Z}(-[\mathcal S])^s,\qquad \omega\otimes x\to x\omega$$
induces an isomorphism of $\kappa[G]$--modules.
\end{lemma}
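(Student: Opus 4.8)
The plan is to show that the natural map $\Phi\colon \Omega_{Z}(-[\mathcal S])^{\mathcal C=1}\otimes_{\mathbb F_p}\kappa \to \Omega_{Z}(-[\mathcal S])^s$ sending $\omega\otimes x$ to $x\omega$ is a $\kappa[G]$-linear isomorphism. It is clearly $\kappa$-linear and, since the $G$-action commutes with $\mathcal C$ and is $\kappa$-semilinear only through its action on $\mathcal K$ (it is in fact $\kappa$-linear here because $\kappa$ is fixed pointwise by the $\kappa$-automorphisms of $Z$), it is $\kappa[G]$-linear; surjectivity is immediate from the very definition of $\Omega_{Z}(-[\mathcal S])^s$ as the $\kappa$-span of $\Omega_{Z}(-[\mathcal S])^{\mathcal C=1}$. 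So the entire content is injectivity.

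The first step is to reduce injectivity to a statement about $\mathbb F_p$-linear independence: if $\omega_1,\dots,\omega_m\in \Omega_{Z}(-[\mathcal S])^{\mathcal C=1}$ are linearly independent over $\mathbb F_p$, I must show they remain linearly independent over $\kappa$. Suppose not, and take a $\kappa$-linear relation $\sum_{i=1}^m x_i\omega_i=0$ with the $x_i\in\kappa$ not all zero and with the number of nonzero coefficients minimal; after rescaling we may assume some $x_{i_0}=1$. Apply the Cartier operator $\mathcal C$ to this relation. Using that $\mathcal C$ is $p^{-1}$-linear, i.e. $\mathcal C(x^p\omega)=x\mathcal C(\omega)$, together with $\mathcal C(\omega_i)=\omega_i$, one gets $\sum_i x_i^{1/p}\omega_i=0$ (here $x_i^{1/p}\in\kappa$ makes sense because $\kappa$ is algebraically closed, hence perfect). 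Now subtract: the relation $\sum_i (x_i - x_i^{1/p})\omega_i=0$ has strictly fewer nonzero coefficients (the coefficient at $i_0$ vanishes since $1-1=0$), so by minimality all its coefficients vanish, i.e. $x_i^{1/p}=x_i$ for every $i$, which forces each $x_i\in\mathbb F_p$. But then $\sum_i x_i\omega_i=0$ is an $\mathbb F_p$-linear relation among the $\omega_i$, contradicting their $\mathbb F_p$-independence. This is the standard Cartier/Artin–Schreier descent argument, and it is the crux; no genuine obstacle arises because perfectness of $\kappa$ makes the $p$-th-root operation available.

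It remains to observe that $\Phi$ is an isomorphism of finite-dimensional objects and that the identification is compatible with the $G$-action, which is automatic since $G$ acts $\kappa$-linearly on $\Omega_Z$ and commutes with $\mathcal C$, hence preserves both $\Omega_{Z}(-[\mathcal S])^{\mathcal C=1}$ and $\Omega_{Z}(-[\mathcal S])^s$, and $\Phi$ intertwines the diagonal action on the tensor product (trivial on $\kappa$) with the action on the target. Thus $\Phi$ is a $\kappa[G]$-module isomorphism. As a byproduct this yields the numerical relation $\dim_\kappa \Omega_{Z}(-[\mathcal S])^s = \dim_{\mathbb F_p}\Omega_{Z}(-[\mathcal S])^{\mathcal C=1} = \dim_{\mathbb F_p}\mathcal M_{\mathcal S,\emptyset}[p]$ via Lemma \ref{Cartier}(4), which is exactly what is needed to deduce Nakajima's theorem from Theorem \ref{ml-free}.
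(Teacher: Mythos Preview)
Your proof is correct and follows essentially the same approach as the paper: both reduce to showing that $\mathbb F_p$-independent elements of $\Omega_{Z}(-[\mathcal S])^{\mathcal C=1}$ remain $\kappa$-independent, then apply $\mathcal C$ and use its $p^{-1}$-linearity to force the coefficients into $\mathbb F_p$. The only cosmetic difference is that the paper expresses $\omega_n$ in terms of a $\kappa$-independent subset $\omega_1,\dots,\omega_{n-1}$ and uses uniqueness of coefficients, whereas you use a minimal-support argument and subtract the two relations; both variants are standard and equivalent.
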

\begin{proof} It suffices to show that if $\{\omega_1, \dots , \omega_n\}$ are $\mathbb F_p$--linearly independent
elements of $\Omega_{Z}(-[\mathcal S])^{\mathcal C=1}$, then they remain $\kappa$--linearly independent in
$\Omega_{Z}(-[\mathcal S])$. Assume that this is not true. Without loss of generality, one may assume that $\{\omega_1,\dots,\omega_{n-1}\}$ are
$\kappa$--linearly independent, but
$$\omega_n=x_1\cdot \omega_1 + \cdots + x_{n-1}\cdot\omega_{n-1}\,,$$
for some $x_1,\dots,x_{n-1}\in\kappa$. Now, apply $\mathcal C$ to
the equality above. We obtain
$$\omega_n=x_1^{1/p}\cdot \omega_1 + \cdots + x_{n-1}^{1/p}\cdot\omega_{n-1}\,.$$
This implies (via the assumed $\kappa$--linear independence of
$\omega_1, \dots, \omega_{n-1}$) that
$$x_1=x_1^{1/p}, \dots,
x_{n-1}=x_{n-1}^{1/p}\,.$$
Consequently,  $x_1, \dots, x_{n-1}\in
\mathbb F_p$, which contradicts the $\mathbb F_p$--linear independence of
$\{\omega_1, \dots, \omega_n\}$.
\end{proof}

Next, we state and prove the main result of this section.

\begin{proposition}\label{equivalence} Let $\kappa$ be an algebraically closed field of characteristic $p$. Let $Z\to Z'$
be a $G$--Galois cover of connected, smooth, projective curves defined over $\kappa$. Let $\mathcal S$ and $\mathcal T$ be
$G$--equivariant, disjoint, finite sets of closed points on $Z$, such that $S$ is non-empty and contains the ramified locus of the cover.
Assume that $G$ is a $p$--group. Then, the following are equivalent.
\begin{enumerate}
  \item {\bf (Nakajima's Theorem)} $\Omega_Z(-[\mathcal S])^s$ is a free $\kappa[G]$--module whose rank is given by
$${\rm rank}_{\kappa[G]}\Omega_Z(-[\mathcal S])^s =(\gamma_{Z'}-1+|\mathcal S'|)\,.$$
  \item {\bf (Theorem \ref{ml-free} for $\ell=p$)} $\mathcal M_{\mathcal S, \mathcal T}[p]$ is a free $\mathbb F_p[G]$--module whose rank is given by
$${\rm rank}_{\mathbb F_p[G]}\mathcal M_{\mathcal S, \mathcal T}[p] =(\gamma_{Z'}-1+|\mathcal S'|)\,.$$
\end{enumerate}
\end{proposition}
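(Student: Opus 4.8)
The plan is to reduce the asserted equivalence to the single identification
$\mathcal M_{\mathcal S, \mathcal T}[p]\otimes_{\mathbb F_p}\kappa\cong\Omega_Z(-[\mathcal S])^s$
of $\kappa[G]$--modules. This comes essentially for free: compose the $\mathbb F_p[G]$--module isomorphism $\mathcal M_{\mathcal S, \mathcal T}[p]\cong\Omega_Z(-[\mathcal S])^{\mathcal C=1}$ of Lemma \ref{Cartier}(4) (which, consistent with Remark \ref{p-torsion}, does not depend on $\mathcal T$) with the $\kappa[G]$--module isomorphism $\Omega_Z(-[\mathcal S])^{\mathcal C=1}\otimes_{\mathbb F_p}\kappa\cong\Omega_Z(-[\mathcal S])^s$ of Lemma \ref{tensor}. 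Throughout I will use that $\kappa[G]=\mathbb F_p[G]\otimes_{\mathbb F_p}\kappa$ and that $\kappa$ is a faithfully flat $\mathbb F_p$--algebra, and that $\mathbb F_p[G]$ and $\kappa[G]$ are local rings since $G$ is a $p$--group.

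The implication $(2)\Rightarrow(1)$ is then immediate: if $\mathcal M_{\mathcal S, \mathcal T}[p]\cong\mathbb F_p[G]^r$ with $r=\gamma_{Z'}-1+|\mathcal S'|$, then applying $-\otimes_{\mathbb F_p}\kappa$ and the identification above yields $\Omega_Z(-[\mathcal S])^s\cong\kappa[G]^r$ with the same $r$, which is exactly statement (1).

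For the converse $(1)\Rightarrow(2)$ — the step I expect to carry the actual weight — I will descend freeness from $\kappa[G]$ to $\mathbb F_p[G]$ by means of Nakajima's criterion, Proposition \ref{G-invariant-lemma}, rather than invoking general faithfully flat descent of projectivity, since this keeps the argument internal to the present framework. Put $M:=\mathcal M_{\mathcal S, \mathcal T}[p]$, a finite--dimensional $\mathbb F_p[G]$--module. Because $-\otimes_{\mathbb F_p}\kappa$ is exact, it commutes with the formation of $G$--invariants, so $M^G\otimes_{\mathbb F_p}\kappa\cong(M\otimes_{\mathbb F_p}\kappa)^G\cong\left(\Omega_Z(-[\mathcal S])^s\right)^G$. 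Assuming (1), the right--hand side is the $G$--invariant subspace of $\kappa[G]^r$, spanned by the $r$ translates of the norm element $\sum_{g\in G}g$, hence of $\kappa$--dimension $r$, while $\dim_\kappa\Omega_Z(-[\mathcal S])^s=r|G|$. Comparing $\mathbb F_p$-- and $\kappa$--dimensions across $-\otimes_{\mathbb F_p}\kappa$ gives $\dim_{\mathbb F_p}M=r|G|=|G|\cdot\dim_{\mathbb F_p}M^G$, so Proposition \ref{G-invariant-lemma} applies and shows that $M$ is $\mathbb F_p[G]$--free of rank $\dim_{\mathbb F_p}M^G=r=\gamma_{Z'}-1+|\mathcal S'|$, which is statement (2).

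The only genuinely non-formal point, and the one I would be careful to spell out, is the commutation of $G$--invariants with the flat base change $-\otimes_{\mathbb F_p}\kappa$ together with the ensuing dimension bookkeeping that feeds Proposition \ref{G-invariant-lemma}; everything else is routine. (One could instead run $(1)\Rightarrow(2)$ via finitely presented faithfully flat descent of projectivity followed by locality of $\mathbb F_p[G]$ and a dimension count to pin down the rank, but the route through Proposition \ref{G-invariant-lemma} is shorter and self--contained.)
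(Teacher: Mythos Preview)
Your proposal is correct. Both your argument and the paper's hinge on the same two lemmas (Lemma \ref{Cartier}(4) and Lemma \ref{tensor}) to produce the identification $\mathcal M_{\mathcal S,\mathcal T}[p]\otimes_{\mathbb F_p}\kappa\cong\Omega_Z(-[\mathcal S])^s$, so the overall shape is the same. The only real difference is in the direction $(1)\Rightarrow(2)$: the paper simply invokes faithful flatness of $\kappa[G]$ over $\mathbb F_p[G]$ to pass directly between the statements ``$\Omega_Z(-[\mathcal S])^{\mathcal C=1}$ is $\mathbb F_p[G]$--free of rank $r$'' and ``$\Omega_Z(-[\mathcal S])^s$ is $\kappa[G]$--free of rank $r$'' (implicitly using descent of projectivity for finitely presented modules together with locality of $\mathbb F_p[G]$), whereas you instead read off the two dimensions $\dim_{\mathbb F_p}M$ and $\dim_{\mathbb F_p}M^G$ from the $\kappa$--side and feed them into Proposition \ref{G-invariant-lemma}. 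Your route is a bit more self-contained in that it avoids appealing to general descent of projectivity; the paper's route is a one-liner once that descent fact is granted. You already flagged this alternative yourself, so there is nothing to add.
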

\begin{proof} Lemma \ref{tensor} gives an isomorphism of $\kappa[G]$--modules
$$\Omega_{Z}(-[\mathcal S])^{\mathcal C=1}\otimes_{\mathbb F_p[G]}\kappa[G] \overset\sim\longrightarrow \Omega_{Z}(-[\mathcal S])^s,\qquad \omega\otimes x\to x\omega\,.$$
Combine this isomorphism with the fact that $\kappa[G]$ is a faithfully flat $\mathbb F_p[G]$--algebra to conclude that
(1) above is equivalent to the statement
\medskip

\rm{(1')}\quad $\Omega_Z(-[\mathcal S])^{\mathcal C=1}$ is a free $\mathbb F_p[G]$--module of rank
$(\gamma_{Z'}-1+|\mathcal S'|)\,.$
\medskip

\noindent Now, use the isomorphism in Lemma \ref{Cartier} (4) to conclude that statement (1') above is equivalent to statement (2)
in our Proposition.
\end{proof}

\bibliographystyle{plain}
\bibliography{greither-popescu}
\nocite{Cassels-Frohlich}
\nocite{Crew-Katz}
\nocite{Crew-pcovers}
\nocite{Illusie-deRham}

\end{document}